\documentclass[11pt,reqno]{amsart}
\usepackage{amsmath}
\usepackage{etoolbox}
\usepackage{amscd, amsfonts, amssymb, graphicx, color}
\usepackage{url}
\usepackage{tikz-cd}
\usepackage{hyphenat}
\usepackage{mathtools}
\usepackage{cite}
\usepackage[colorlinks=true]{hyperref}
\usepackage{titlesec}
\usepackage{geometry}
\geometry{margin=1in}
\usepackage{hyphenat}
\usepackage{float}
\titleformat{\section}
{\normalfont\bfseries\large\centering}{\thesection}{1em}{}
\makeatletter
\patchcmd\maketitle
{\uppercasenonmath\shorttitle}
{}
{}{}
\patchcmd\maketitle
{\@nx\MakeUppercase{\the\toks@}}
{\the\toks@}
{}
{}{}
\patchcmd\@settitle{\uppercasenonmath\@title}{\Large}{}{}
\patchcmd\@setauthors
{\MakeUppercase{\authors}}
{\authors}
{}{}
\makeatother
\textheight 22.5truecm \textwidth 14.5truecm
\setlength{\oddsidemargin}{0.35in}\setlength{\evensidemargin}{0.35in}
\setlength{\topmargin}{-.5cm}
\hyphenation{he-lio-trope opos-sum}
\setcounter{MaxMatrixCols}{10}

\newtheorem{theorem}{Theorem}[section]

\newtheorem{lemma}{Lemma}[section]

\newtheorem{remark}{Remark}[section]

\newtheorem{corollary}{Corollary}[section]
\newtheorem{Proof of Theorem}{Proof}
\newtheorem{proposition}[theorem]{Proposition}

\hypersetup{urlcolor=blue, citecolor=red, linkcolor= blue}

\newcommand{\norm}[1]{\left\lVert#1\right\rVert}

\makeatletter
\renewcommand\subsection{\@startsection{subsection}{2}%
	\z@{.7\linespacing\@plus\linespacing}{.5\linespacing}%
	{\normalfont\bfseries}}
\makeatother

\begin{document}
	\title[$q$-Numerical Radius of Sectorial Matrices and Operator Matrices]{$q$-Numerical radius of sectorial matrices and $2 \times 2$ operator matrices}

	\author{Jyoti Rani}
	\address{Department of mathematics, Indian Institute of Technology Bhilai, Durg, India 491002}
	\email{jyotir@iitbhilai.ac.in}
	
	\author{Arnab Patra}
	\address{Department of mathematics, Indian Institute of Technology Bhilai, Durg, India 491002}
	\email{arnabp@iitbhilai.ac.in}

	\subjclass[2020]{ 15A60; 15B48; 47B44; 47A63}
	
	\keywords{$q$-numerical radius, sectorial matrices, operator matrices, normaloid operator.}
	
	\begin{abstract} 
		This article focuses on several significant bounds of $q$-numerical radius $w_q(A)$ for sectorial matrix $A$ which refine and generalize previously established bounds. One of the significant bounds we have derived is as follows:
		\[\frac{|q|^2\cos^2\alpha}{2}	\|A^*A+AA^*\|  \le w_q^2(A)\le \frac{\left(\sqrt{(1-|q|^2)\left(1+2sin^2(\alpha)\right)}+ |q|\right)^2}{2} \|A^*A+AA^*\|,\]
		where $ A $ is a sectorial matrix. Also, upper bounds for commutator and anti-commutator matrices and relations between $w_q(A^t)$ and $w_q^t(A)$ for non-integral power $t\in [0,1]$ are also obtained. Moreover, a few significant estimations of $q$-numerical radius of off-diagonal $2\times2$ operator matrices are developed.   
		
	\end{abstract}
	
	\maketitle
	
	\section{Introduction}
	Let $\mathcal{B(H)}$ be the $C^*$ algebra of all bounded linear operators acting on the Hilbert space $(\mathcal{H}, \langle .,. \rangle )$ equipped with the operator norm. For any $T \in \mathcal{B(H)}$, numerical range $W(T)$, numerical radius $w(T)$, and operator norm $\|T\|$ are defined, respectively, by 
	\begin{equation*}
		W(T)= \{\langle Tx,x \rangle  : x \in \mathcal{H}, \|x\|=1\},
	\end{equation*} 
	\begin{equation*}
		w(T)= \sup\{| \langle Tx,x \rangle | : x \in \mathcal{H}, \|x\|=1\}, \mbox{and}
	\end{equation*}
	\begin{equation*}
		\|T\|= \sup\{| \langle Tx,y \rangle | : x,y \in \mathcal{H}, \|x\|=\|y\|=1\}.
	\end{equation*}	
	It is well known that $w(.)$ defines a norm on $\mathcal{B(H)}$, which is equivalent to the usual operator norm $\|T\|$. In fact, for every $T \in \mathcal{B(H)}$ the following relation holds,
	\begin{equation}\label{eq2.1}
		\frac{\|T\|}{2} \le w(T) \le \|T\|.
	\end{equation}
	
	The left-hand side inequality is transformed into an equality when $T^2=0$ and the other inequality becomes an equality when the operator $T$ is normal.

	
	
	In 2005, Kittaneh \cite{kittaneh2005numerical} obtained one more refinement of inequality (\ref{eq2.1}) as follows		\begin{equation}\label{eq3.2}
		\frac{1}{4}\|T^*T+TT^*\| \le w^2(T) \le \frac{1}{2}\|T^*T+TT^*\|.
	\end{equation}
	For more such inequalities one may refer to the recent articles
	\cite{bhunia2021development,moslehian2020seminorm, bhunia2019numerical,najafi2020some,feki2022some,feki2022some1} along with the references therein 
	and the books \cite{bhunia2022lectures,gau2021numerical}.

	Recently studies have been made on the numerical radius of a particular class of matrices, known as sectorial matrices. Let $M_n$ be the algebra of $n \times n$ matrices. Every $A \in M_n$ admits the decomposition $A= \mathcal{R}(A)+i \mathcal{I}(A)$, where $\mathcal{R}(A)=\frac{A+A^*}{2}$ and $\mathcal{I}(A)=\frac{A-A^*}{2i}$ are hermition matrices. A matrix $A\in M_n$ is said to be accretive if $\mathcal{R}(A)>0.$ Also, $A\in M_n$ is said to be accretive-dissipative if $\mathcal{R}(A)>0$ and $\mathcal{I}(A)>0$. In other words, $A$ is accretive if the numerical range $W(A)$ is a subset of the right-half plane. If the numerical range $W(A)$ is a subset of a sector $S_\alpha$ for some $\alpha \in \mathclose{[}0,\frac{\pi}{2}\mathopen{)}$ in the right half of the complex plane is said to be sectorial where
	\begin{equation*}
		S_{\alpha}= \{ z \in \mathbb{C} : \mathcal{R} z >0 , |\mathcal{I}z| \le \tan(\alpha)(\mathcal{R} z)\}.
	\end{equation*}
	The class of all $n \times n$ sectorial matrices where $W(T) \subseteq S_{\alpha}$ is denoted by $\prod_{s,\alpha}^n$. If $\alpha=0$, then the sector $S_{\alpha}$, reduces to the interval $(0, \infty)$ which reduces the class $\prod_{s,\alpha}^n$ to the set of all positive matrices in $M_n$. The numerical range and radius of sectorial matrices have been explored by several authors. In particular, Samah Abu Sammour et al. \cite{sammour2022geometric}, Yassine Bedrani et al. \cite{bedrani2021numerical}, and Pintu Bhunia et al. \cite{bhunia2023numerical} has focused their study on the bounds of the numerical radius of sectorial matrices. However, we explore the concept of a much generalized numerical range, namely, the $q$-numerical range of sectorial matrices.  The $q$-numerical range of $T \in \mathcal{B}(H)$ is defined by,

	\begin{equation*}
		W_q(T)=\{ \langle Tx,y \rangle: x,y \in H, \|x\|=\|y\|=1, \langle x,y \rangle=q \},
	\end{equation*}
	where $|q| \le 1$.
	The $q$-numerical radius $w_q(T)$ of $T \in \mathcal{B(H)}$ is 
	\begin{equation*}
		w_q(T)=\sup_{w \in W_q(T)}|w|.
	\end{equation*}
	The following relations can be easily derived:
	\begin{equation}\label{ri}
		w_q(\mathcal{R}(T)) \le w_q(T),~w_q(\mathcal{I}(T)) \le w_q(T).
	\end{equation}
	Limited research work has been available in the literature on the $q$-numerical range.
	Several notable studies concerning the $q$-numerical range and radius include \cite{tsing1984constrained,li1998q,psarrakos2000q,chien2002davis}.
	For a detailed review, one can refer to the book \cite[p.380]{gau2021numerical}. Recently in \cite{fakhri2024q}, for $T \in \mathcal{B(H)}$ and $q \in (0,1)$, a few significant estimations of $q$-numerical radius are provided such as
	\begin{equation}\label{q1}
		\frac{q}{2(2 - q^2)} \|T\| \leq w_q(T) \leq \|T\|,  
	\end{equation}
	\begin{equation}\label{q2}
		\frac{1}{4} \left( \frac{q}{2 - q^2} \right)^2 \| T^*T + TT^* \| 
		\leq w_q^2(T) 
		\leq \frac{\left( q + 2\sqrt{1-q^2} \right)^2}{2} \| T^*T + TT^* \|.
	\end{equation}

	The present work is an attempt to obtain bounds of $q$-numerical radius of sectorial matrices. This leads to the refinement of several results on $q$-numerical radius. Significant results on upper bounds for the $q$-numerical radius of commutator and anti-commutator matrices, non-integral powers of matrices are obtained. Furthermore, this study also explores the $q$-numerical radius inequalities associated with $2 \times 2$ block matrices.

	
	\section{$q$-Numerical Radius Inequalities for Sectorial Matrices}

	Throughout this paper, the symbol $T$ denotes an element of $\mathcal{B(H)}$, while the letter $A$ is designated specifically for $n \times n$ matrices. We start this section by listing some known outcomes that will be required in our analysis of the principal findings.
	\begin{lemma}\label{il1.6}\cite{sammour2022geometric}
		Let $A \in \prod_{s,\alpha}^n$ for some $\alpha \in \mathclose{[}0,\frac{\pi}{2}\mathopen{)} $. Then
		\begin{equation*}
			\|\mathcal{I}(A)\| \le \sin (\alpha)w(A).
		\end{equation*}	
	\end{lemma}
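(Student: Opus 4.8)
The plan is to exploit the sectoriality of $A$ to control the imaginary (skew-Hermitian) part $\mathcal{I}(A)$ by the real part $\mathcal{R}(A)$, and then to relate $\|\mathcal{I}(A)\|$ to the numerical radius $w(A)$. First I would recall that for a sectorial matrix $A \in \prod_{s,\alpha}^n$, the defining condition $W(A) \subseteq S_\alpha$ says precisely that for every unit vector $x$, writing $\langle Ax,x\rangle = \langle \mathcal{R}(A)x,x\rangle + i\langle \mathcal{I}(A)x,x\rangle$, one has $|\langle \mathcal{I}(A)x,x\rangle| \le \tan(\alpha)\,\langle \mathcal{R}(A)x,x\rangle$. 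Since $\mathcal{R}(A) > 0$, this forces $\langle \mathcal{R}(A)x,x\rangle > 0$, and in particular $\langle \mathcal{R}(A)x,x\rangle = \mathcal{R}\langle Ax,x\rangle \le |\langle Ax,x\rangle| \le w(A)$ for every unit vector $x$.

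The key step is to estimate $\|\mathcal{I}(A)\|$. Since $\mathcal{I}(A)$ is Hermitian, $\|\mathcal{I}(A)\| = \sup_{\|x\|=1} |\langle \mathcal{I}(A)x,x\rangle|$. For each such $x$ I would combine the two displayed inequalities: $|\langle \mathcal{I}(A)x,x\rangle| \le \tan(\alpha)\,\langle \mathcal{R}(A)x,x\rangle \le \tan(\alpha)\,w(A)$. Taking the supremum over unit vectors $x$ gives $\|\mathcal{I}(A)\| \le \tan(\alpha)\,w(A)$. This is already a bound of the right shape, but with $\tan(\alpha)$ rather than $\sin(\alpha)$, so a refinement is needed.

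The main obstacle — and the part requiring the geometric idea behind the statement — is sharpening $\tan(\alpha)$ to $\sin(\alpha)$. I expect this comes from a rotation/scaling trick: for a sectorial matrix, $w(A)$ is not freely large relative to $\mathcal{R}(A)$; rather, the vector $\langle Ax,x\rangle$ lies in the sector, so its modulus satisfies $|\langle Ax,x\rangle|^2 = \langle \mathcal{R}(A)x,x\rangle^2 + \langle \mathcal{I}(A)x,x\rangle^2$. If $x_0$ is a unit vector (nearly) attaining $\|\mathcal{I}(A)\|$, then $w(A) \ge |\langle Ax_0,x_0\rangle| = \sqrt{\langle \mathcal{R}(A)x_0,x_0\rangle^2 + \langle \mathcal{I}(A)x_0,x_0\rangle^2} \ge \sqrt{\langle \mathcal{I}(A)x_0,x_0\rangle^2/\tan^2(\alpha) + \langle \mathcal{I}(A)x_0,x_0\rangle^2}$, where the last step uses $\langle \mathcal{R}(A)x_0,x_0\rangle \ge |\langle \mathcal{I}(A)x_0,x_0\rangle|/\tan(\alpha)$ (the sector condition solved for $\mathcal{R}$). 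Simplifying the radical, $\sqrt{1/\tan^2(\alpha) + 1} = 1/\sin(\alpha)$, so $w(A) \ge \|\mathcal{I}(A)\|/\sin(\alpha)$, which rearranges to the claimed bound $\|\mathcal{I}(A)\| \le \sin(\alpha)\,w(A)$. I would present this cleanly by fixing a unit vector $x$, setting $a = \langle\mathcal{R}(A)x,x\rangle$ and $b = \langle\mathcal{I}(A)x,x\rangle$, noting $a \ge |b|/\tan\alpha$ from sectoriality, hence $a^2 + b^2 \ge b^2(\cot^2\alpha + 1) = b^2/\sin^2\alpha$, so $|b| \le \sin(\alpha)\sqrt{a^2+b^2} = \sin(\alpha)\,|\langle Ax,x\rangle| \le \sin(\alpha)\,w(A)$, and then take the supremum over $x$. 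A small care point is the degenerate case $\alpha = 0$, where $\tan\alpha$ and $\cot\alpha$ need interpreting, but there $\mathcal{I}(A) = 0$ trivially (the sector degenerates to the positive real axis), so the inequality holds vacuously.
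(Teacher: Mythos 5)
Your proof is correct: the sector condition $|\langle\mathcal{I}(A)x,x\rangle|\le\tan(\alpha)\langle\mathcal{R}(A)x,x\rangle$ for each unit vector, combined with $\cot^2\alpha+1=1/\sin^2\alpha$ and the fact that $\|\mathcal{I}(A)\|=\sup_{\|x\|=1}|\langle\mathcal{I}(A)x,x\rangle|$ for the Hermitian matrix $\mathcal{I}(A)$, gives exactly $|\Im\langle Ax,x\rangle|\le\sin(\alpha)|\langle Ax,x\rangle|\le\sin(\alpha)w(A)$, and your handling of $\alpha=0$ is fine. The paper quotes this lemma from the cited reference without proof, and the argument there is this same geometric observation about points of $W(A)$ lying in $S_\alpha$, so your approach is essentially the standard one.
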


	\begin{lemma}\label{it 1.3} \cite{sammour2022geometric}
		Let $A \in \prod_{s,\alpha}^n$. Then
		\begin{equation*}
			\|A\| \le \sqrt{1+2\sin^2 (\alpha)}w(A).
		\end{equation*}
	\end{lemma}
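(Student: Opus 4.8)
The plan is to exploit the Cartesian decomposition $A=\mathcal{R}(A)+i\mathcal{I}(A)$ together with the positivity of $\mathcal{R}(A)$, which is available precisely because $A$ is sectorial. Fix a unit vector $x$, and recall $A^{*}=\mathcal{R}(A)-i\mathcal{I}(A)$. Expanding $\|Ax\|^{2}$ and $\|A^{*}x\|^{2}$ through this decomposition, the cross terms $\pm 2\,\mathrm{Im}\langle\mathcal{R}(A)x,\mathcal{I}(A)x\rangle$ occur with opposite signs, so adding the two identities gives
\[
\|Ax\|^{2}=2\|\mathcal{R}(A)x\|^{2}+2\|\mathcal{I}(A)x\|^{2}-\|A^{*}x\|^{2}
\]
(equivalently $\langle(A^{*}A+AA^{*})x,x\rangle=2\langle(\mathcal{R}(A)^{2}+\mathcal{I}(A)^{2})x,x\rangle$). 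So an upper bound for $\|Ax\|^{2}$ reduces to a sufficiently strong \emph{lower} bound for $\|A^{*}x\|^{2}$.

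For that lower bound I would first use Cauchy--Schwarz, $\|A^{*}x\|^{2}\ge|\langle A^{*}x,x\rangle|^{2}=|\langle Ax,x\rangle|^{2}\ge\langle\mathcal{R}(A)x,x\rangle^{2}$, and then convert $\langle\mathcal{R}(A)x,x\rangle$ into a quantity involving $\|\mathcal{R}(A)x\|$ via the operator inequality $\mathcal{R}(A)^{2}\le\|\mathcal{R}(A)\|\,\mathcal{R}(A)$ (valid since $\mathcal{R}(A)\ge0$), which yields $\langle\mathcal{R}(A)x,x\rangle\ge\|\mathcal{R}(A)x\|^{2}/\|\mathcal{R}(A)\|$. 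Substituting and writing $t=\|\mathcal{R}(A)x\|^{2}\in[0,\|\mathcal{R}(A)\|^{2}]$ gives
\[
\|Ax\|^{2}\le 2t-\frac{t^{2}}{\|\mathcal{R}(A)\|^{2}}+2\|\mathcal{I}(A)x\|^{2}.
\]

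The map $t\mapsto 2t-t^{2}/\|\mathcal{R}(A)\|^{2}$ is increasing on $[0,\|\mathcal{R}(A)\|^{2}]$, so its maximum there equals $\|\mathcal{R}(A)\|^{2}$, attained at the right endpoint; hence $\|Ax\|^{2}\le\|\mathcal{R}(A)\|^{2}+2\|\mathcal{I}(A)\|^{2}$ for every unit $x$. Since $\mathcal{R}(A)$ is Hermitian and positive, $\|\mathcal{R}(A)\|=w(\mathcal{R}(A))\le w(A)$, and Lemma~\ref{il1.6} gives $\|\mathcal{I}(A)\|\le\sin(\alpha)\,w(A)$. Combining, $\|Ax\|^{2}\le\bigl(1+2\sin^{2}\alpha\bigr)w^{2}(A)$, and taking the supremum over unit vectors $x$ yields $\|A\|\le\sqrt{1+2\sin^{2}\alpha}\,w(A)$.

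I do not anticipate a serious obstacle; the computation is short. The one non-routine idea is to route the estimate through $\|A^{*}x\|^{2}$ and to use $\mathcal{R}(A)^{2}\le\|\mathcal{R}(A)\|\,\mathcal{R}(A)$ — this is exactly where sectoriality enters (through $\mathcal{R}(A)>0$), and it is what produces the sharp constant $1+2\sin^{2}\alpha$ rather than the weaker $(1+\sin\alpha)^{2}$ that a plain Cauchy--Schwarz applied to the cross term $\mathrm{Im}\langle\mathcal{R}(A)x,\mathcal{I}(A)x\rangle$ would give. The remaining care is purely bookkeeping: dispose of the trivial case $A=0$ so that $\|\mathcal{R}(A)\|>0$ and the division is legitimate, and observe that the monotonicity is applied precisely on the interval $[0,\|\mathcal{R}(A)\|^{2}]$ in which $t$ ranges.
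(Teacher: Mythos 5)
Your proof is correct. Note that the paper itself offers no argument for this lemma at all --- it is quoted verbatim from \cite{sammour2022geometric} --- so what you have produced is a self-contained proof rather than a variant of an internal one. The chain of estimates checks out: the Cartesian identity $\|Ax\|^2+\|A^*x\|^2=2\bigl(\|\mathcal{R}(A)x\|^2+\|\mathcal{I}(A)x\|^2\bigr)$ is correct (the imaginary cross terms cancel); $\|A^*x\|\ge|\langle A^*x,x\rangle|=|\langle Ax,x\rangle|\ge\langle\mathcal{R}(A)x,x\rangle\ge\|\mathcal{R}(A)x\|^2/\|\mathcal{R}(A)\|$ is valid because accretivity gives $\mathcal{R}(A)\ge0$, hence $\mathcal{R}(A)^2\le\|\mathcal{R}(A)\|\,\mathcal{R}(A)$ and nonnegativity of $\langle\mathcal{R}(A)x,x\rangle$ so that squaring is legitimate; and maximizing $t\mapsto 2t-t^2/\|\mathcal{R}(A)\|^2$ over $t=\|\mathcal{R}(A)x\|^2\in[0,\|\mathcal{R}(A)\|^2]$ gives $\|Ax\|^2\le\|\mathcal{R}(A)\|^2+2\|\mathcal{I}(A)x\|^2$. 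Finishing with $\|\mathcal{R}(A)\|=w(\mathcal{R}(A))\le w(A)$ and Lemma~\ref{il1.6} (itself only quoted in the paper, so using it is consistent with the paper's logical structure) yields exactly $\|A\|\le\sqrt{1+2\sin^2\alpha}\,w(A)$. Your routing through a lower bound on $\|A^*x\|^2$ is what secures the constant $1+2\sin^2\alpha$; the lazier estimate $\|A\|\le\|\mathcal{R}(A)\|+\|\mathcal{I}(A)\|\le(1+\sin\alpha)w(A)$ is strictly weaker since $1+2\sin^2\alpha\le(1+\sin\alpha)^2$. The only bookkeeping points are the ones you already noted: $\|\mathcal{R}(A)\|>0$ holds automatically for $A\in\prod_{s,\alpha}^n$ (indeed $\mathcal{R}(A)>0$), so the division is harmless.
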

	The following Lemma represents a relation between $|||\mathcal{R}(A)|||$ and $|||A|||$, where $|||.|||$ is an unitarily invariant norm on $M_n.$
	\begin{lemma}\label{normr1}\cite{zhang2015matrix}
		Let  $ A \in \prod_{s,\alpha}^n$ and $|||.|||$ be any unitarily invariant norm on $M_n$. Then
		\begin{equation*}
			\cos({\alpha})|||A|||\le ||| \mathcal{R}(A)||| \le |||A|||. 
		\end{equation*}
	\end{lemma}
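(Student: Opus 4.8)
Although Lemma \ref{normr1} is quoted from \cite{zhang2015matrix}, here is the route I would take. The inequality $|||\mathcal{R}(A)|||\le|||A|||$ is immediate from $\mathcal{R}(A)=\tfrac12(A+A^*)$, the triangle inequality, and $|||A^*|||=|||A|||$; the whole content is the other inequality, which I plan to establish in the equivalent form $|||A|||\le\sec\alpha\,|||\mathcal{R}(A)|||$. If $\alpha=0$ then $W(A)\subseteq(0,\infty)$ forces $A=\mathcal{R}(A)$ and there is nothing to prove, so assume $\alpha\in(0,\tfrac{\pi}{2})$. Put $R=\mathcal{R}(A)$ and $K=\mathcal{I}(A)$; since $A$ is accretive, $R>0$.

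The plan is to exhibit $A$ as $\sec\alpha$ times a ``sandwich'' $R^{1/2}MR^{1/2}$ with $M$ a contraction. Set $C:=R^{-1/2}KR^{-1/2}$, which is Hermitian, so that $A=R+iK=R^{1/2}(I+iC)R^{1/2}$. Testing $W(A)\subseteq S_\alpha$ on unit vectors shows that sectoriality is equivalent to the operator inequalities $-\tan\alpha\,R\le K\le\tan\alpha\,R$, hence, conjugating by $R^{-1/2}$, to $\|C\|\le\tan\alpha$. Therefore, with $M:=\cos\alpha\,(I+iC)$ one gets $M^*M=\cos^2\alpha\,(I+C^2)\le\cos^2\alpha\,(1+\tan^2\alpha)I=I$, so $M$ is a contraction, and
\[
A=\frac{1}{\cos\alpha}\,R^{1/2}M\,R^{1/2}.
\]

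It then suffices to prove that $|||R^{1/2}MR^{1/2}|||\le|||R|||$ for $R\ge0$ and every contraction $M$, since combining this with the display gives $|||A|||=\tfrac{1}{\cos\alpha}|||R^{1/2}MR^{1/2}|||\le\tfrac{1}{\cos\alpha}|||R|||$, i.e. $\cos\alpha\,|||A|||\le|||\mathcal{R}(A)|||$. This last step is the point where one must be slightly careful, since it is not merely ``pull out $\|M\|$''. One clean way: apply Horn's singular-value product inequality $\prod_{j\le k}s_j(XY)\le\prod_{j\le k}s_j(X)s_j(Y)$ twice, first to $(R^{1/2}M)\,R^{1/2}$ and then to $R^{1/2}\,M$, together with $s_j(M)\le\|M\|\le1$, to obtain $\prod_{j\le k}s_j(R^{1/2}MR^{1/2})\le\prod_{j\le k}s_j(R^{1/2})^2=\prod_{j\le k}\lambda_j(R)$ for every $k$; thus the singular values of $R^{1/2}MR^{1/2}$ are log-majorized, hence weakly majorized, by the eigenvalues of $R$, and unitarily invariant norms are monotone under weak majorization. (Alternatively, $\bigl(\begin{smallmatrix}I&M\\M^*&I\end{smallmatrix}\bigr)\ge0$, so congruence by $\mathrm{diag}(R^{1/2},R^{1/2})$ makes $\bigl(\begin{smallmatrix}R&R^{1/2}MR^{1/2}\\ \ast&R\end{smallmatrix}\bigr)\ge0$, and the standard off-diagonal bound for positive $2\times2$ block matrices gives $2|||R^{1/2}MR^{1/2}|||\le|||2R|||$.) The only genuinely nonroutine ingredient is spotting the contractive representation $A=\sec\alpha\cdot R^{1/2}MR^{1/2}$; once that is in hand, the rest is bookkeeping with known unitarily invariant norm inequalities, and the $\alpha=0$ case shows the constant $\cos\alpha$ is sharp.
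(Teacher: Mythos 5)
Your proof is correct. The paper gives no argument for this lemma --- it is quoted directly from \cite{zhang2015matrix} --- and your route essentially reconstructs that reference's proof: the identity $A=R^{1/2}(I+iC)R^{1/2}$ with $R=\mathcal{R}(A)>0$, $C=R^{-1/2}\,\mathcal{I}(A)\,R^{-1/2}$ Hermitian and $\|C\|\le\tan\alpha$ is precisely Zhang's sectorial decomposition, and your passage to arbitrary unitarily invariant norms via Horn's weak log-majorization of singular values plus Fan dominance is sound, as are the trivial upper bound and the $\alpha=0$ case. One caution about your parenthetical alternative: the ``standard off-diagonal bound'' $2\,|||X|||\le|||A+B|||$ for a positive semidefinite block matrix $\bigl(\begin{smallmatrix}A&X\\X^*&B\end{smallmatrix}\bigr)$ is false in general --- take $A=E_{11}$, $B=E_{22}$, $X=E_{12}$, so the $4\times4$ block matrix equals $(e_1+e_4)(e_1+e_4)^*\ge 0$ while $2\|X\|=2>1=\|A+B\|$ --- so that step should instead be justified, in your equal-diagonal-block situation, by the Horn--Mathias Cauchy--Schwarz inequality $|||X|||^2\le|||A|||\,|||B|||$, which with $A=B=R$ gives $|||R^{1/2}MR^{1/2}|||\le|||R|||$ directly. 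Since your primary (log-majorization) argument does not rely on that parenthetical, the proof as a whole stands.
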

	
	
	The following result states that,  if a matrix $A$ is sectorial, then raising it to a fractional power will still preserve its sectoriality.
	\begin{lemma}\cite{drury2015principal} \label{il1.2}
		Let $0 \le \alpha <\frac{\pi}{2}$, $0 <t<1$ and $A \in M_n$ is a square matrix with $W(A) \subseteq S_{\alpha}$. Then $W(A^t)\subseteq S_{t \alpha}$.
	\end{lemma}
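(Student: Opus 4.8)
The plan is to reduce the statement to the special case $\alpha=\pi/2$ (positive definite real part) by a rotation argument, and then to prove that case via the integral representation of the fractional power. Since $W(A)\subseteq S_\alpha$ with $\alpha<\pi/2$, the spectrum of $A$ lies in the open right half-plane, so $A$ is invertible and the principal power $A^t$ is well defined. For every $\theta$ with $|\theta|<\frac{\pi}{2}-\alpha$ we have $W(e^{i\theta}A)=e^{i\theta}W(A)\subseteq e^{i\theta}S_\alpha$, which again lies in the open right half-plane, so $\mathcal{R}(e^{i\theta}A)>0$; moreover, since $|\theta+\arg\lambda|<\pi$ for every eigenvalue $\lambda$ of $A$, the principal-branch identity $(e^{i\theta}A)^t=e^{it\theta}A^t$ holds (pass from the scalar identity to the matrix through the holomorphic functional calculus). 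Hence, granting the base case that $\mathcal{R}(B)>0$ implies $W(B^t)\subseteq S_{t\pi/2}$, one gets $W(A^t)\subseteq e^{-it\theta}S_{t\pi/2}$ for every such $\theta$, and an elementary computation shows $\bigcap_{|\theta|<\pi/2-\alpha}e^{-it\theta}S_{t\pi/2}=S_{t\alpha}$, which is exactly the lemma.

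To prove the base case I would use the representation
\[
B^t=\frac{\sin(\pi t)}{\pi}\int_0^\infty \lambda^{t-1}(\lambda I+B)^{-1}B\,d\lambda,\qquad 0<t<1,
\]
valid whenever $\sigma(B)$ avoids $(-\infty,0]$. Since $W(B^t)\subseteq S_{t\pi/2}$ amounts to the two positivity statements $\mathcal{R}\!\left(e^{\pm i(1-t)\pi/2}B^t\right)\ge 0$ (with strict positivity ruling out $0\in W(B^t)$), I would estimate these two Hermitian parts. The key move is to deform the ray of integration from the positive real axis onto the positive, respectively negative, imaginary axis; this is legitimate because $-\sigma(B)$ lies in the open left half-plane bounded away from the imaginary axis (indeed $\mathcal{R}(B)>0$ forces each eigenvalue of $B$ to have real part at least $\lambda_{\min}(\mathcal{R}(B))>0$), while the integrand decays like $\lambda^{t-2}$ at infinity and is $O(\lambda^{t-1})$ near $0$, so the connecting arcs contribute nothing. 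After the deformation and collection of phase factors one obtains
\[
e^{i(1-t)\pi/2}B^t=\frac{\sin(\pi t)}{\pi}\int_0^\infty i\,s^{t-1}(isI+B)^{-1}B\,ds ,
\]
and the crux is the identity
\[
\mathcal{R}\!\left(i(isI+B)^{-1}B\right)=s\,\mathcal{R}\!\left((isI+B)^{-1}\right)=s\,(isI+B)^{-1}\,\mathcal{R}(B)\,\big((isI+B)^{-1}\big)^{*},
\]
which follows from $(isI+B)^{-1}B=I-is(isI+B)^{-1}$ together with the congruence $\mathcal{R}(M^{-1})=M^{-1}\mathcal{R}(M)(M^{-1})^{*}$ and $\mathcal{R}(isI+B)=\mathcal{R}(B)$. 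As $\mathcal{R}(B)>0$, this last expression is positive definite for every $s>0$, so integrating gives $\mathcal{R}\!\left(e^{i(1-t)\pi/2}B^t\right)>0$; the negative imaginary ray yields $\mathcal{R}\!\left(e^{-i(1-t)\pi/2}B^t\right)>0$ in the same way. Thus $W(B^t)$ is contained in the intersection of the two half-planes $\{\,w:\mathcal{R}(e^{\pm i(1-t)\pi/2}w)>0\,\}$, which is precisely $S_{t\pi/2}$.

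The routine parts are the rotation reduction and the two sector-intersection computations; the genuinely delicate point is the contour deformation, and it cannot be avoided. Along the positive real axis the integrand does not have the right sign: already for a scalar $a$ with $\arg a$ close to $\pi/2$, the number $\frac{a}{\lambda+a}$ has argument almost $\pi/2$, so $e^{i(1-t)\pi/2}\frac{a}{\lambda+a}$ has negative real part; only after rotating the contour onto the imaginary axis does the integrand become a manifestly positive combination of sectorial matrices with the correct half-angle, via the congruence $\mathcal{R}(M^{-1})=M^{-1}\mathcal{R}(M)(M^{-1})^{*}$. Minor bookkeeping to carry along: the branch of $\lambda^{t-1}$ as the ray is rotated; the fact that $\sigma(e^{i\theta}A)$ stays bounded away from the imaginary axis for $|\theta|<\pi/2-\alpha$, which validates the deformation inside the reduction step as well; and the strictness of the positivity inequalities, needed to land in the half-open sector $S_{t\alpha}$ rather than in its closure.
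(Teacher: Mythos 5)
The paper does not prove this lemma at all: it is imported verbatim from \cite{drury2015principal}, so there is no in-paper argument to measure you against, only the cited literature. Your proof is correct and self-contained. The base case is the classical accretive-operator argument (Kato/Balakrishnan): the representation $B^t=\frac{\sin(\pi t)}{\pi}\int_0^\infty \lambda^{t-1}(\lambda I+B)^{-1}B\,d\lambda$ is valid here, the rotation of the ray onto $\pm i(0,\infty)$ is legitimate because $-\sigma(B)$ stays in the open left half-plane while the integrand is $O(\lambda^{t-1})$ at $0$ and $O(\lambda^{t-2})$ at $\infty$, the phase factors indeed collect to give $e^{i(1-t)\pi/2}B^t=\frac{\sin(\pi t)}{\pi}\int_0^\infty i\,s^{t-1}(isI+B)^{-1}B\,ds$, and your crux identity checks out since $i(isI+B)^{-1}B=iI+s(isI+B)^{-1}$ and $\mathcal{R}(isI+B)=\mathcal{R}(B)$, so the Hermitian part of the integrand is the congruence $s\,(isI+B)^{-1}\mathcal{R}(B)\bigl((isI+B)^{-1}\bigr)^*>0$; integrating preserves strict positivity, and the two half-planes $\{\mathcal{R}(e^{\pm i(1-t)\pi/2}w)>0\}$ intersect exactly in the (open) sector of half-angle $t\pi/2$. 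The reduction step is also sound: eigenvalues lie in $W(A)\subseteq S_\alpha$, so the principal-branch identity $(e^{i\theta}A)^t=e^{it\theta}A^t$ holds for $|\theta|<\tfrac{\pi}{2}-\alpha$ via the functional calculus, $\mathcal{R}(e^{i\theta}A)>0$ by compactness of the numerical range, and the intersection $\bigcap_{|\theta|<\pi/2-\alpha}e^{-it\theta}S_{t\pi/2}=S_{t\alpha}$ is as you claim, with $0\notin W(A^t)$ supplied by the strictness in the base case. Two small remarks: your observation that termwise positivity fails on the real ray is exactly right, which is why the deformation cannot be skipped; and since nothing in your argument uses finite-dimensionality beyond $\sigma(B)$ lying in a compact subset of the open right half-plane, your proof actually yields the statement for bounded Hilbert-space operators whose numerical range is contained in $S_\alpha$, which is slightly more general than the matrix formulation quoted in the paper, whatever the precise route taken in \cite{drury2015principal}.
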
	 
	Additionally, it should be noted that \(W(A^{-t})\) is a subset of \(S_{t\alpha}\). This can be inferred from the fact that \(W(A^{-1})\) is a subset of \(S_{\alpha}\) when \(W(A)\) is a subset of \(S_{\alpha}\).
	
	Here are some additional important results related to the fractional powers of sectorial matrices:
	\begin{lemma}\cite{choi2019extensions}\label{il1.7}
		Let $A \in \prod_{s,\alpha}^n$ and $t \in [0,1]$. Then 
		\begin{equation*}
			\cos^{2t}(\alpha)\mathcal{R}(A^t) \le (\mathcal{R}(A))^t \le \mathcal{R}(A^t).
		\end{equation*}
	\end{lemma}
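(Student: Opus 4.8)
The plan is to run everything through the Balakrishnan (Cauchy‑type) integral representation of the principal fractional power. First dispose of the endpoints: for $t=1$ the claim reads $\cos^{2}\alpha\,\mathcal{R}(A)\le\mathcal{R}(A)\le\mathcal{R}(A)$, which is clear since $\mathcal{R}(A)\ge 0$, and for $t=0$ all three terms equal $I$; so assume $t\in(0,1)$. Because $W(A)\subseteq S_{\alpha}$ forces the spectrum of $A$ into the open right half‑plane, $A+\lambda I$ is invertible for every $\lambda\ge 0$ and one has the norm‑convergent representation
\[
A^{t}=\frac{\sin t\pi}{\pi}\int_{0}^{\infty}\lambda^{t-1}A(A+\lambda I)^{-1}\,d\lambda ,
\]
and likewise $Q^{t}=\frac{\sin t\pi}{\pi}\int_{0}^{\infty}\lambda^{t-1}Q(Q+\lambda I)^{-1}\,d\lambda$ for every positive definite $Q$. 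Taking real parts, which commutes with this integral (using $(A^{t})^{*}=(A^{*})^{t}$), and applying the second formula with $Q=\mathcal{R}(A)>0$, the whole statement reduces to the pointwise‑in‑$\lambda$ comparison of $\mathcal{R}\!\left(A(A+\lambda I)^{-1}\right)$ with $\mathcal{R}(A)\!\left(\mathcal{R}(A)+\lambda I\right)^{-1}$.

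For the pointwise step I would write $A=P+iS$ with $P=\mathcal{R}(A)$, $S=\mathcal{I}(A)$, and $R_{\lambda}=P+\lambda I>0$. From $A(A+\lambda I)^{-1}=I-\lambda(A+\lambda I)^{-1}$ together with the elementary identity
\[
\mathcal{R}\!\left((R_{\lambda}+iS)^{-1}\right)=(R_{\lambda}-iS)^{-1}R_{\lambda}(R_{\lambda}+iS)^{-1}=\left(R_{\lambda}+SR_{\lambda}^{-1}S\right)^{-1},
\]
one obtains $\mathcal{R}\!\left(A(A+\lambda I)^{-1}\right)=F_{\lambda}(F_{\lambda}+\lambda I)^{-1}$, where $F_{\lambda}:=P+S(P+\lambda I)^{-1}S$. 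The crux is then the uniform sandwich $P\le F_{\lambda}\le\sec^{2}\alpha\,P$ for all $\lambda>0$. The lower bound is immediate. For the upper bound, write $S=P^{1/2}KP^{1/2}$ with $K=P^{-1/2}SP^{-1/2}$ Hermitian — the defining inequality $|\langle Sx,x\rangle|\le\tan\alpha\,\langle Px,x\rangle$ of sectoriality says precisely that $\|K\|\le\tan\alpha$ — and use $P^{1/2}(P+\lambda I)^{-1}P^{1/2}\le I$ to get $S(P+\lambda I)^{-1}S=P^{1/2}K\big[P^{1/2}(P+\lambda I)^{-1}P^{1/2}\big]KP^{1/2}\le P^{1/2}K^{2}P^{1/2}\le\tan^{2}\alpha\,P$, whence $F_{\lambda}\le(1+\tan^{2}\alpha)P=\sec^{2}\alpha\,P$.

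To finish, note that $x\mapsto x(x+\lambda)^{-1}=1-\lambda(x+\lambda)^{-1}$ is operator monotone on $(0,\infty)$, so the sandwich on $F_{\lambda}$ transfers to
\[
\mathcal{R}(A)\!\left(\mathcal{R}(A)+\lambda I\right)^{-1}\le\mathcal{R}\!\left(A(A+\lambda I)^{-1}\right)\le\left(\sec^{2}\alpha\,\mathcal{R}(A)\right)\!\left(\sec^{2}\alpha\,\mathcal{R}(A)+\lambda I\right)^{-1}.
\]
Multiplying by $\tfrac{\sin t\pi}{\pi}\lambda^{t-1}\ge 0$, integrating over $(0,\infty)$, and invoking the integral representation once for $\mathcal{R}(A)$ and once for $\sec^{2}\alpha\,\mathcal{R}(A)$ gives $(\mathcal{R}(A))^{t}\le\mathcal{R}(A^{t})\le(\sec^{2}\alpha\,\mathcal{R}(A))^{t}=\sec^{2t}\alpha\,(\mathcal{R}(A))^{t}$, which is exactly the stated pair of inequalities. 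I expect the only genuine obstacle to be obtaining the $\lambda$‑uniform bound on $F_{\lambda}$: the naive estimate $S(P+\lambda I)^{-1}S\le\tan^{2}\alpha\,(P+\lambda I)$ is correct but grows like $\lambda$ and is not integrable against $\lambda^{t-1}$ near infinity, so it is essential to exploit $(P+\lambda I)^{-1}\le P^{-1}$ to pull out a bound that does not depend on $\lambda$.
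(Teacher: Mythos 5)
The paper offers no proof of this lemma at all: it is quoted verbatim from the cited source \cite{choi2019extensions}, so there is no in-paper argument to compare against. Your proof is correct and self-contained. The endpoint cases are handled properly; the Balakrishnan representation applies because $\mathcal{R}(A)>0$ forces $\sigma(A)$ into the open right half-plane; the identity $\mathcal{R}\big((R_\lambda+iS)^{-1}\big)=\big(R_\lambda+SR_\lambda^{-1}S\big)^{-1}$ is verified by $(R_\lambda+iS)R_\lambda^{-1}(R_\lambda-iS)=R_\lambda+SR_\lambda^{-1}S$; the sandwich $P\le F_\lambda\le\sec^2\alpha\,P$ is exactly right, with the sectoriality condition $|\langle Sx,x\rangle|\le\tan\alpha\,\langle Px,x\rangle$ giving $\|P^{-1/2}SP^{-1/2}\|\le\tan\alpha$, and your closing remark correctly identifies that the $\lambda$-uniform estimate (via $(P+\lambda I)^{-1}\le P^{-1}$) is what makes the integral comparison work; operator monotonicity of $x\mapsto x(x+\lambda)^{-1}$ then transfers the sandwich under the integral, and $(\sec^2\alpha\,P)^t=\sec^{2t}\alpha\,P^t$ rearranges to the stated two-sided bound. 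In substance this reconstructs the argument of the original source, where the key step is likewise the resolvent comparison $\mathcal{R}(A)\big(\mathcal{R}(A)+\lambda I\big)^{-1}\le\mathcal{R}\big(A(A+\lambda I)^{-1}\big)\le\sec^2\alpha\,\mathcal{R}(A)\big(\sec^2\alpha\,\mathcal{R}(A)+\lambda I\big)^{-1}$ fed into the integral representation of $x^t$; your explicit $F_\lambda=P+S(P+\lambda I)^{-1}S$ computation is a clean, elementary derivation of that comparison, so nothing further is needed.
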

	\begin{lemma}\cite{choi2019extensions}\label{il1.9}
		Let $A \in \prod_{s,\alpha}^n$ and $t \in [-1,0]$. Then 
		\begin{equation*}
			\mathcal{R}(A^t) \le (\mathcal{R}(A))^t \le \cos^{2t}(\alpha) \mathcal{R}(A^t).
		\end{equation*}
	\end{lemma}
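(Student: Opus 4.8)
The plan is to reduce everything to the endpoint case $t=-1$ and then bootstrap. Throughout, write $s=-t\in[0,1]$, set $H=\mathcal{R}(A)$ (which is positive definite, since $0\notin W(A)$ forces $A$ to be invertible) and $K=\mathcal{I}(A)=K^{*}$, so that $A=H+iK$. The only inputs I would need beyond what is already stated are Lemma~\ref{il1.2} (fractional powers preserve sectoriality), Lemma~\ref{il1.7} (the companion statement for $t\in[0,1]$), the remark that $W(A^{-1})\subseteq S_{\alpha}$ whenever $W(A)\subseteq S_{\alpha}$, the order-reversing property of $X\mapsto X^{-1}$ on positive definite matrices, and the L\"owner--Heinz inequality (operator monotonicity of $X\mapsto X^{s}$ for $0\le s\le 1$).

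First I would handle $t=-1$. From $A^{-1}+(A^{-1})^{*}=A^{-1}(A+A^{*})(A^{-1})^{*}$ one gets the identity $\mathcal{R}(A^{-1})=A^{-1}H(A^{-1})^{*}$, so after conjugating by $A$ the desired bound $\cos^{2}\alpha\,H^{-1}\le\mathcal{R}(A^{-1})\le H^{-1}$ is equivalent to $H\le AH^{-1}A^{*}\le\sec^{2}\alpha\,H$. Expanding, $AH^{-1}A^{*}=(H+iK)H^{-1}(H-iK)=H+KH^{-1}K$. The lower bound is automatic because $KH^{-1}K=(H^{-1/2}K)^{*}(H^{-1/2}K)\ge 0$. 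For the upper bound, sectoriality of $A$ is precisely $-\tan\alpha\,H\le K\le\tan\alpha\,H$, i.e. $\|H^{-1/2}KH^{-1/2}\|\le\tan\alpha$, i.e. $KH^{-1}K\le\tan^{2}\alpha\,H$; hence $H+KH^{-1}K\le(1+\tan^{2}\alpha)H=\sec^{2}\alpha\,H$. This settles the case $t=-1$ of the lemma.

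Next I would bootstrap to general $t\in[-1,0]$. For the right-hand inequality, raise $\cos^{2}\alpha\,H^{-1}\le\mathcal{R}(A^{-1})$ to the power $s$ by L\"owner--Heinz to get $\cos^{2s}\alpha\,(\mathcal{R}(A))^{-s}\le(\mathcal{R}(A^{-1}))^{s}$, then use Lemma~\ref{il1.7} applied to $A^{-1}\in\prod_{s,\alpha}^{n}$ to get $(\mathcal{R}(A^{-1}))^{s}\le\mathcal{R}((A^{-1})^{s})=\mathcal{R}(A^{-s})$; chaining gives $(\mathcal{R}(A))^{t}=(\mathcal{R}(A))^{-s}\le\cos^{-2s}\alpha\,\mathcal{R}(A^{-s})=\cos^{2t}\alpha\,\mathcal{R}(A^{t})$. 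For the left-hand inequality, Lemma~\ref{il1.2} gives $A^{s}\in\prod_{s,s\alpha}^{n}$; applying the $t=-1$ case to $A^{s}$ yields $\mathcal{R}(A^{-s})=\mathcal{R}((A^{s})^{-1})\le(\mathcal{R}(A^{s}))^{-1}$, while the right inequality of Lemma~\ref{il1.7} applied to $A$ with exponent $s$ gives $(\mathcal{R}(A))^{s}\le\mathcal{R}(A^{s})$, hence $(\mathcal{R}(A^{s}))^{-1}\le(\mathcal{R}(A))^{-s}$; chaining gives $\mathcal{R}(A^{t})=\mathcal{R}(A^{-s})\le(\mathcal{R}(A))^{-s}=(\mathcal{R}(A))^{t}$.

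The hard part is the endpoint case: correctly translating the geometric hypothesis $W(A)\subseteq S_{\alpha}$ into the clean operator inequality $KH^{-1}K\le\tan^{2}\alpha\,H$, and keeping track of the non-commutative conjugations by $A$ and $H^{1/2}$. Once that is in place, the passage to general $t\in[-1,0]$ is only a matter of combining Lemmas~\ref{il1.2} and \ref{il1.7} with monotonicity of powers and inversion in the right order.
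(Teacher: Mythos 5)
Your proposal is correct, but note that the paper does not prove this statement at all: Lemma~\ref{il1.9} is simply quoted from \cite{choi2019extensions} (as is its companion Lemma~\ref{il1.7}), so there is no in-paper argument to match. What you supply is a self-contained derivation whose only external input is Lemma~\ref{il1.7} itself: the endpoint case $t=-1$ is handled by the congruence identity $\mathcal{R}(A^{-1})=A^{-1}\mathcal{R}(A)(A^{-1})^{*}$ together with the translation of sectoriality into $-\tan\alpha\,H\le K\le\tan\alpha\,H$, hence $KH^{-1}K\le\tan^{2}\alpha\,H$ and $AH^{-1}A^{*}=H+KH^{-1}K\in[H,\sec^{2}\alpha\,H]$; the general $t\in[-1,0]$ then follows by L\"owner--Heinz, antitonicity of inversion, Lemma~\ref{il1.2}, and the paper's remark that $W(A^{-1})\subseteq S_{\alpha}$. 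All of these steps check out (the lower bound of the endpoint case needs only $\mathcal{R}(A)>0$, which is exactly what you use when applying it to $A^{s}$), and the argument is arguably more elementary than relying on the cited reference, since it exposes where the factor $\cos^{2t}\alpha$ comes from. Two small points worth a line in a write-up: justify that $A$ (and $A^{s}$) is invertible because $\sigma(A)\subseteq W(A)\subseteq S_{\alpha}$, and that the principal fractional powers satisfy $(A^{-1})^{s}=(A^{s})^{-1}=A^{-s}$ since $\sigma(A)$ avoids $(-\infty,0]$; both are standard but are silently used in the bootstrap.
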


	Let $\mathcal{D}$ be the closed unit disc in the complex plane and $\mathcal{D'}=\mathcal{D}\setminus \{0\}.$
	An operator $T\in \mathcal{B(H)}$ is said to be normaloid if $w(T)=\|T\|$. Also, all normal operators are satisfied the similar relation. However similar equality does not hold for $q$-numerical radius.  For example, let $T=\begin{bmatrix}
		{5} & {0}\\
		{0} & {4}
	\end{bmatrix}$. Then $\|T\|=5$ and the $q$-numerical radius $w_q(T)=\frac{9|q|+1}{2} \ne \|T\|$ for all $q \in \mathcal{D}$ except $|q|=1$, follows from Theorem 3.4 \cite[p.384]{gau2021numerical}. In this context, the next theorem provides a relation between $w_q(T)$ and $\|T\|$ for normaloid operator $T$. It is noteworthy that for a normaloid operator $T$, the equality
	$r(T)=\|T\|=w(T)$ holds, where $r(T)$ is the spectral radius of $T$. 
	
	We start our main result with a relation between $w_q(T)$ and $w(T)$ for normaloid operators by using the spectrum inclusion relation.
	\begin{theorem}\label{t1.15}
		If $T \in\mathcal{B(H)}$ is a normaloid operator and $q \in \mathcal{D}$, then 
		\begin{equation*}
			|q| w(T) \le w_q(T) \le w(T).
		\end{equation*}
	\end{theorem}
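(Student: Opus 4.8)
The plan is to handle the two inequalities separately, each time invoking the defining property of a normaloid operator, namely $w(T)=\|T\|=r(T)$, where $r(T)$ denotes the spectral radius of $T$.

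The upper bound is immediate: for unit vectors $x,y$ we have $|\langle Tx,y\rangle|\le \|Tx\|\,\|y\|\le \|T\|$ by Cauchy--Schwarz, so $w_q(T)\le \|T\|$ for every $q\in\mathcal{D}$ (this is also the right-hand side of \eqref{q1}); since $T$ is normaloid, $\|T\|=w(T)$, and the claim follows.

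For the lower bound I would use a spectral-inclusion argument to show that $q\lambda\in\overline{W_q(T)}$ for every $\lambda$ in the approximate point spectrum $\sigma_{ap}(T)$. Given such a $\lambda$, choose unit vectors $x_n$ with $\|Tx_n-\lambda x_n\|\to 0$; picking a unit vector $v_n$ orthogonal to $x_n$ and setting $y_n=\bar q\,x_n+\sqrt{1-|q|^2}\,v_n$ gives a unit vector with $\langle x_n,y_n\rangle=q$, hence $\langle Tx_n,y_n\rangle\in W_q(T)$. Since $\langle Tx_n,y_n\rangle=q\lambda+\langle Tx_n-\lambda x_n,y_n\rangle$ and the second term tends to $0$, we obtain $q\lambda\in\overline{W_q(T)}$, so $|q|\,|\lambda|\le w_q(T)$. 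Now $\sigma(T)$ is nonempty and compact, so it contains a point $\lambda_0$ with $|\lambda_0|=r(T)$, and such a point necessarily lies in $\partial\sigma(T)\subseteq\sigma_{ap}(T)$; applying the above to $\lambda_0$ gives $|q|\,r(T)\le w_q(T)$, and normaloidness turns this into $|q|\,w(T)\le w_q(T)$.

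The only point that needs care is that a general $T\in\mathcal{B(H)}$ need not possess eigenvectors, which is why I work with approximate eigenvectors and with the closure of $W_q(T)$ rather than with $W_q(T)$ itself (for matrices $A\in M_n$ one has honest eigenvectors and the membership $q\lambda\in W_q(A)$ is exact). The construction of the $v_n$ requires $\dim\mathcal{H}\ge 2$; the excluded case $\dim\mathcal{H}=1$ is trivial since then $T$ is a scalar operator. I do not anticipate any serious obstacle beyond these bookkeeping matters.
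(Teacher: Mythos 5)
Your proof is correct and takes essentially the same route as the paper: the upper bound via $w_q(T)\le\|T\|=w(T)$, and the lower bound via the spectral inclusion $q\,\sigma(T)\subseteq\overline{W_q(T)}$ together with $r(T)=w(T)=\|T\|$ for normaloid $T$. The only difference is that you prove the inclusion yourself (for the peripheral/approximate point spectrum, via approximate eigenvectors and the vector $y_n=\overline{q}x_n+\sqrt{1-|q|^2}\,v_n$), whereas the paper simply cites it as Proposition 3.1(h) of Gau--Wu; your argument is a valid self-contained substitute.
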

	\begin{proof}
		From Proposition 3.1 (h) \cite[p.380]{gau2021numerical}, we have the following inclusion relation	
		$$q \sigma (T) \subseteq \overline{W_q(T)} ~~~~\text{for}~|q| \le 1.$$
		It follows that
		\begin{equation*}
			|q|r(T) \le w_q(T).
		\end{equation*}
		As $T$ is normaloid, we have  
		\begin{equation*}
			|q|w(T) \le w_q(T).
		\end{equation*}
		Also, it is well-known that $w_q(T) \le \|T\|$ for all $T \in \mathcal{B(H)}$. Since $T$ is normaloid, we have
		\begin{equation*}
			w_q(T) \le \|T\|=w(T).
		\end{equation*}
		This completes the proof.
	\end{proof}
	
	\begin{remark} Few important observations are mentioned below.
		\begin{enumerate}
			
			\item[(i)] For normal $T,$ the relation $w(T) = \|T\|$ holds and subsequently we have $|q| \|T\| \leq w_q(T) \leq \|T\|$ which is recently obtained in \cite[Theorem 1.4]{stankovic2024some}.	
			\item[(ii)] For $q \in (0,1)$, $q \ge \frac{q}{2-q^2}$, thus Theorem \ref{t1.15} is a refinement of \cite[Theorem 2.1]{fakhri2024q} for normal operator $T$.  
			
			\item[(iii)] Theorem \ref{t1.15} is the $q$-numerical radius analouge of the well-known relation \eqref{eq2.1} for normal operators which reduces to the equality $w(T)=\|T\|$ when $q=1.$
			\item [(iv)] Both the inequalities of Theorem \ref{t1.15} are best possible.
			Let $S_1$ be the set of all $n \times n(n \ge 2)$ real scalar matrices. Let $ S_1 \ni A_1=\eta I$, where $\eta(\ne 0) \in \mathbb{R}$ and $I$ is $n \times n$ identity matrix. Therefore, $w_q(A_1) = |q|w(A_1)$. Let $S$ be the right shift operator. Then $w_q(S)=w(S)=\mathcal{D}$ \cite[p.384]{gau2021numerical}.
		\end{enumerate}
	\end{remark}
	For any $T \in \mathcal{B(H)}$, we have $T= \mathcal{R}(T)+i\mathcal{I}(T)$, where $\mathcal{R}(T)=\frac{T+T^*}{2}$ and $\mathcal{I}(T)=\frac{T-T^*}{2i}$. Also, $\mathcal{R}(T)$ and $\mathcal{I}(T)$ are self-adjoint operators. An easy calculation leads to the following corollary. 
	\begin{corollary}\label{co1.6}
		If $T \in \mathcal{B(H)}$ and $q \in \mathcal{D}$, then we have 
		\begin{itemize}
			\item [(a)] $|q|\|\mathcal{R}(T)\| \le w_q(\mathcal{R}(T))\le \|\mathcal{R}(T)\|$,
			\item [(b)]  $|q|\|\mathcal{I}(T)\| \le w_q(\mathcal{I}(T))\le \|\mathcal{I}(T)\|.$
		\end{itemize}
	\end{corollary}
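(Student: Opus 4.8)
The plan is to apply Theorem \ref{t1.15} with $T$ replaced by the self-adjoint operators $\mathcal{R}(T)$ and $\mathcal{I}(T)$. First I would recall the classical fact that every self-adjoint operator $S \in \mathcal{B(H)}$ is normal, hence normaloid; indeed, for $S = S^*$ the supremum $\sup_{\|x\|=1}|\langle Sx,x\rangle|$ equals $\|S\|$ (this is immediate from the spectral theorem, or from the fact that the Rayleigh quotient of a self-adjoint operator attains values arbitrarily close to $\pm\|S\|$), so $w(S) = \|S\|$.

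Next, since $\mathcal{R}(T) = \frac{T+T^*}{2}$ and $\mathcal{I}(T) = \frac{T-T^*}{2i}$ are both self-adjoint, they are normaloid, and Theorem \ref{t1.15} applies to each of them. Taking $S = \mathcal{R}(T)$ in Theorem \ref{t1.15} gives
\[
|q|\,w(\mathcal{R}(T)) \le w_q(\mathcal{R}(T)) \le w(\mathcal{R}(T)),
\]
and substituting $w(\mathcal{R}(T)) = \|\mathcal{R}(T)\|$ yields part (a). Repeating the argument with $S = \mathcal{I}(T)$ and using $w(\mathcal{I}(T)) = \|\mathcal{I}(T)\|$ yields part (b).

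There is no genuine obstacle in this argument — it is a direct specialization of Theorem \ref{t1.15}. The only point that warrants a line of justification is the normaloid property of self-adjoint operators, which is standard; everything else is substitution. I would therefore keep the written proof to two or three sentences, citing Theorem \ref{t1.15} and the self-adjointness of $\mathcal{R}(T)$ and $\mathcal{I}(T)$.
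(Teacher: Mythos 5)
Your argument is correct and is exactly the route the paper intends: the paper notes that $\mathcal{R}(T)$ and $\mathcal{I}(T)$ are self-adjoint (hence normaloid, with $w(S)=\|S\|$) and obtains the corollary by applying Theorem \ref{t1.15} to each of them. Nothing further is needed.
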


	The next result provides the equivalence of two norms $\|A\|$ and $w_q(A)$ and extends the inequality (\ref{eq2.1}) for $q$-numerical radius of sectorial matrices.
	\begin{theorem}\label{t1.16}
		\begin{itemize}
			\item [(a)]If $A \in \prod_{s,\alpha}^n$ and $q \in \mathcal{D'}$ then 
			\begin{equation*}
				|q| \cos{\alpha} \|A\| \le w_q(A) \le \|A\|.
			\end{equation*}
			\item[(b)]    Let $q \in \mathcal{D'}$. If either $A, A^2 \in \prod_{s,\alpha}^n$ or $A$ is accerative-dissipative, then
			\begin{equation*}\label{sqad}
				\frac{|q|}{\sqrt{2}}\|A\| \le w_q(A) \le \|A\|.
			\end{equation*}  
		\end{itemize}
	\end{theorem}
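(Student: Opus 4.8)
For part~(a), the upper estimate $w_q(A)\le\|A\|$ is the well-known inequality valid for every operator and every $q$ with $|q|\le 1$. For the lower estimate I would use that $\mathcal R(A)$ is self-adjoint, hence normaloid, so Corollary~\ref{co1.6}(a) together with \eqref{ri} gives
\[
|q|\,\|\mathcal R(A)\|\le w_q(\mathcal R(A))\le w_q(A).
\]
Since the operator norm is unitarily invariant, Lemma~\ref{normr1} applied to $A\in\prod_{s,\alpha}^n$ yields $\cos\alpha\,\|A\|\le\|\mathcal R(A)\|$, and combining the two displays gives $|q|\cos\alpha\,\|A\|\le w_q(A)$.

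For part~(b) the organising idea is that, in each of the two cases, $A$ or a unimodular scalar multiple of it lies in $\prod_{s,\pi/4}^n$, after which part~(a) with $\alpha=\pi/4$ (so that $\cos\alpha=1/\sqrt2$) delivers the claim; the upper bound is again $w_q(A)\le\|A\|$. Suppose first that $A,A^2\in\prod_{s,\alpha}^n$. I would start from the identity $\mathcal R(A^2)=\mathcal R(A)^2-\mathcal I(A)^2$, obtained by expanding $(A+A^*)^2$ and $(A-A^*)^2$ and adding. Sectoriality of $A^2$ forces $\mathcal R(A^2)>0$, hence $\mathcal R(A)^2\ge\mathcal I(A)^2$, and sectoriality of $A$ forces $\mathcal R(A)>0$. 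Since $\mathcal I(A)^2=|\mathcal I(A)|^2$ and $\mathcal R(A)\ge 0$, operator monotonicity of $t\mapsto t^{1/2}$ gives $\mathcal R(A)=(\mathcal R(A)^2)^{1/2}\ge(|\mathcal I(A)|^2)^{1/2}=|\mathcal I(A)|$; and as $|\mathcal I(A)|\ge\pm\mathcal I(A)$, for every unit vector $x$ we obtain $|\langle\mathcal I(A)x,x\rangle|\le\langle|\mathcal I(A)|x,x\rangle\le\langle\mathcal R(A)x,x\rangle$ with $\langle\mathcal R(A)x,x\rangle>0$. This is exactly $W(A)\subseteq S_{\pi/4}$, that is $A\in\prod_{s,\pi/4}^n$, so part~(a) gives $\tfrac{|q|}{\sqrt2}\|A\|\le w_q(A)$.

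Suppose next that $A$ is accretive-dissipative, so $\mathcal R(A)>0$ and $\mathcal I(A)>0$. Put $B=e^{-i\pi/4}A$; a direct computation gives $\mathcal R(B)=\tfrac{1}{\sqrt2}(\mathcal R(A)+\mathcal I(A))>0$ and $\mathcal I(B)=\tfrac{1}{\sqrt2}(\mathcal I(A)-\mathcal R(A))$. For a unit vector $x$, setting $p=\langle\mathcal R(A)x,x\rangle>0$ and $r=\langle\mathcal I(A)x,x\rangle>0$, the scalar inequality $|r-p|\le r+p$ yields $|\langle\mathcal I(B)x,x\rangle|\le\langle\mathcal R(B)x,x\rangle$ with $\langle\mathcal R(B)x,x\rangle>0$; hence $W(B)\subseteq S_{\pi/4}$, that is $B\in\prod_{s,\pi/4}^n$. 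Applying part~(a) to $B$ with $\alpha=\pi/4$ gives $\tfrac{|q|}{\sqrt2}\|B\|\le w_q(B)$, and since $|e^{-i\pi/4}|=1$ we have $\|B\|=\|A\|$ and $w_q(B)=w_q(A)$ (the latter because $W_q(\lambda A)=\lambda W_q(A)$ for every scalar $\lambda$), which is the asserted inequality.

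The routine ingredients --- the identity $\mathcal R(A^2)=\mathcal R(A)^2-\mathcal I(A)^2$, the scalar estimate $|r-p|\le r+p$, and the rotation invariance of $\|\cdot\|$ and $w_q(\cdot)$ --- are harmless. The genuinely substantive step is the reduction of both hypotheses in part~(b) to membership in $\prod_{s,\pi/4}^n$, and within it the passage from $\mathcal R(A)^2\ge\mathcal I(A)^2$ to $\mathcal R(A)\ge|\mathcal I(A)|$ via operator monotonicity of the square root; I expect that to be the single point requiring genuine care.
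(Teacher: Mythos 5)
Your proof is correct, and part (a) is exactly the paper's argument: chain Lemma \ref{normr1}, Corollary \ref{co1.6}(a) and relation \eqref{ri} to get $\cos(\alpha)\|A\|\le\|\mathcal R(A)\|\le\frac{1}{|q|}w_q(\mathcal R(A))\le\frac{1}{|q|}w_q(A)\le\frac{1}{|q|}\|A\|$. In part (b) you and the paper share the same organising idea (reduce both hypotheses to membership in $\prod_{s,\pi/4}^n$ and invoke part (a) with $\cos(\pi/4)=1/\sqrt2$), and your treatment of the accretive--dissipative case via $B=e^{-i\pi/4}A$ is what the paper does, only spelled out more carefully (the paper does not record the invariances $\|B\|=\|A\|$, $w_q(B)=w_q(A)$ that you make explicit). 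Where you genuinely diverge is the case $A,A^2\in\prod_{s,\alpha}^n$: the paper cites Lemma \ref{il1.2} (Drury's result that $W(A)\subseteq S_\alpha$ implies $W(A^t)\subseteq S_{t\alpha}$), writes $A=(A^2)^{1/2}$ (implicitly using that a sectorial $A$ is the principal square root of $A^2$), and concludes $W(A)\subseteq S_{\alpha/2}\subset S_{\pi/4}$; you instead derive $A\in\prod_{s,\pi/4}^n$ directly from the Cartesian identity $\mathcal R(A^2)=\mathcal R(A)^2-\mathcal I(A)^2$, positivity of $\mathcal R(A^2)$ and $\mathcal R(A)$, and L\"owner--Heinz monotonicity of the square root to get $\mathcal R(A)\ge|\mathcal I(A)|$. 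Your route is more elementary and self-contained (it avoids both the cited fractional-power lemma and the principal-square-root identification), while the paper's route is shorter given the lemma and in fact yields the slightly sharper containment $W(A)\subseteq S_{\alpha/2}$, hence the constant $|q|\cos(\alpha/2)$, though only the weaker $|q|/\sqrt2$ is claimed in the statement. Both arguments are sound.
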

	\begin{proof}
		\begin{itemize}
			\item [(a)] 	Using Lemma \ref{normr1},  Corollary \ref{co1.6}, and relation (\ref{ri}) we have
			\begin{equation*}
				\cos(\alpha) \|A\| \le \|\mathcal{R}(A)\|\le  \frac{1}{|q|}w_q(\mathcal{R}(A)) \le \frac{1}{|q|}w_q(A) \le  \frac{1}{|q|} \|A\|.
			\end{equation*}
			Hence, the required result holds.   
			\item[(b)]    If $A^2 \in \prod_{s,\alpha}^n$ then Lemma \ref{il1.2} implies that
			\begin{equation}\label{square}
				W(A)=W((A^2)^\frac{1}{2}) \subset S_\frac{\alpha}{2}\subset S_\frac{\pi}{4}.    
			\end{equation}
			Also, if $A$ is accerative-dissipative, then
			\begin{equation}\label{accdis}
				W(e^{\frac{-i\pi}{4}}A)\subset S_\frac{\pi}{4}. 
			\end{equation}
			Thus, Theorem \ref{t1.16} and relations (\ref{square}) and (\ref{accdis}) all together give us the desired result.  
		\end{itemize}
	\end{proof}
	\begin{remark}
		\begin{itemize}
			\item [(i)]If $\alpha \in \left[0,\frac{\pi}{3}\right)$ and $q=1$, then the lower bound mentioned in Theorem \ref{t1.16}(a) is a refinement of lower bound mentioned in (\ref{eq2.1}). Moreover, for $\cos(\alpha) \ge \frac{1}{2(2-q^2)}$, $q \in (0,1)$, the lower bound mentioned in Theorem \ref{t1.16}(a) is a refinement of the lower bound mentioned in relation \eqref{q1}.
			\item[(ii)]  Theorem \ref{t1.16}(b) provides a more precise version of Theorem \ref{t1.16} for $\alpha \in \left(\frac{\pi}{4},\frac{\pi}{2}\right)$.
		\end{itemize}
		
	\end{remark}
	
	The following corollary provides an upper bound for \(w_q(AB)\).
	Recall that for any two matrices $A,B \in M_n$, we have \cite[p.125]{gau2021numerical}
	\begin{equation}\label{4in}
		w(AB)\le 4w(A)w(B). 
	\end{equation} 
	
	Also, it is well-known that if $A$ and $B$ are positive matrices then
	\begin{equation}\label{positive}
		w(AB)\le w(A)w(B).
	\end{equation}
	
	However, the relations \eqref{4in} and (\ref{positive}) do not hold for $q$-numerical radius. Consider normal matrices $A=\begin{bmatrix}
		1 &0 \\
		0 & 2
	\end{bmatrix}$ and $B=\begin{bmatrix}
		2 &0 \\
		0 & 2
	\end{bmatrix}$.
	Then by Theorem 3.4 \cite[p.384]{gau2021numerical}, we have $w_q(A)=\frac{3|q|+1}{2}$, $w_q(B)=2|q|$ and $w_q(AB)=3|q|+1$. One can check that $w_q(AB) \not \le w_q(A)w_q(B)$ for all $q \in \mathcal{D'}$ except $|q|=1$, and $w_q(AB) \not\le 4w_q(A)w_q(B)$ for $|q|< 0.25$. Our next theorem is an attempt to overcome such situations. Using Theorem \ref{t1.16}(a) and $w_q(AB) \le \|A\|\|B\|$,  we can derive the following result easily. 
	\begin{corollary}\label{th1.13}
		For $q \in \mathcal{D'}$, the following results hold true,
		\begin{itemize}
			\item [(a)] 	If $A$ and $B$ both are sectorial matrices then  
			\begin{equation*}
				|q|^2w_q(AB) \le \sec^2(\alpha)w_q(A)w_q(B).
			\end{equation*}
			\item [(b)] If $A$ and $B$ both are positive matrices then  
			\begin{equation*}\label{nee1}
				|q|^2w_q(AB) \le w_q(A)w_q(B).
			\end{equation*}
		\end{itemize}
	\end{corollary}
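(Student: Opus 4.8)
The plan is to combine the lower bound of Theorem~\ref{t1.16}(a) with two elementary facts: submultiplicativity of the operator norm, $\|AB\|\le\|A\|\|B\|$, and the universal inequality $w_q(AB)\le\|AB\|$, valid for every $q\in\mathcal{D}$.

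For part (a), I would first apply Theorem~\ref{t1.16}(a) separately to the sectorial matrices $A$ and $B$. Since $q\in\mathcal{D'}$ we have $q\ne 0$ and $\cos(\alpha)>0$ (because $\alpha<\frac{\pi}{2}$), so the estimate $|q|\cos(\alpha)\|A\|\le w_q(A)$ can be rewritten as $\|A\|\le\frac{1}{|q|\cos(\alpha)}w_q(A)$, and likewise $\|B\|\le\frac{1}{|q|\cos(\alpha)}w_q(B)$. Chaining these with $w_q(AB)\le\|AB\|\le\|A\|\|B\|$ gives
\[
w_q(AB)\le\frac{1}{|q|^{2}\cos^{2}(\alpha)}\,w_q(A)\,w_q(B),
\]
and multiplying through by $|q|^{2}$ yields the claimed bound $|q|^{2}w_q(AB)\le\sec^{2}(\alpha)\,w_q(A)\,w_q(B)$.

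For part (b), I would observe that a positive matrix is in particular self-adjoint, so that $\mathcal{R}(A)=A$ and Corollary~\ref{co1.6}(a) gives $|q|\,\|A\|\le w_q(A)$ directly; equivalently, a positive matrix may be regarded as sectorial with $\alpha=0$, whence part (a) applies with $\sec^{2}(0)=1$. Repeating the same chain of inequalities as above, now with the factor $\cos(\alpha)$ absent, produces $w_q(AB)\le\frac{1}{|q|^{2}}w_q(A)\,w_q(B)$, which is the assertion.

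I do not anticipate any genuine obstacle: the statement is a formal consequence of Theorem~\ref{t1.16}(a) together with submultiplicativity of the operator norm, exactly as announced in the text preceding the corollary. The only point requiring a moment's care is the hypothesis $q\in\mathcal{D'}$, which guarantees $q\ne 0$ and hence legitimises dividing by $|q|$ (and, in part (a), by $|q|\cos(\alpha)$).
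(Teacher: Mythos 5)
Your argument is exactly the one the paper intends: it invokes Theorem \ref{t1.16}(a) for $A$ and $B$ together with $w_q(AB)\le\|AB\|\le\|A\|\|B\|$, and handles the positive case by taking $\alpha=0$ (equivalently via Corollary \ref{co1.6}(a)). This matches the paper's own derivation, so the proposal is correct and takes essentially the same route.
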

		
	\begin{remark}
		If we take $q=1$, Corollary \ref{th1.13}(a) implies that $	w(AB) \le\sec^2(\alpha)w(A)w(B)$ when $A$ and $B$ both are sectorial matrices, which is a refinement of inequality (\ref{4in}) when $\alpha \in \left( 0, \frac{\pi}{3} \right)$. If we take $q=1$ in Corollary \ref{th1.13}(b), we obtain the well-known result $w(AB)\le w(A)w(B)$. 
	\end{remark}

	Before proceeding to the next theorem, we establish the following construction.
	Consider $q \in \mathcal{D'}$ but $|q|\ne 1$ and $x \in \mathcal{H}$ with $\|x\|=1$. For any $z \in \mathcal{H}$ satisfying $\langle x,z \rangle=0$ and $\|z\|=1$, let $y=\overline{q}x+\sqrt{1-|q|^2}z$. This construction ensures $\|y\|=1$ and $\langle x,y \rangle=q$. Conversely, for any $y \in \mathcal{H}$ with $\|y\|=1$ and $\langle x,y \rangle=q$, set $z=\frac{1}{\sqrt{1-|q|^2}}(y-\overline{q}x)$, resulting in $\langle x,z \rangle=0$ and $\|z\|=1$. Thus, there exists a one-to-one correspondence between such a $z$ and $y$.
	
	Our next focus is to establish a relation between the $q$-numerical radius and classical numerical radius for sectorial matrices. It is evident that if $A$ is normaloid then $w_q(A)\le w(A)$. 
	Take $q \in (0,1)$ and a non-normaloid matrix $A= \begin{bmatrix}
		0 & 1 \\
		0 & 0
	\end{bmatrix}$. Then for $\begin{bmatrix}
		x_1 \\
		x_2
	\end{bmatrix}, \begin{bmatrix}
		y_1 \\
		y_2
	\end{bmatrix}\in\mathbb{C}^2$, with $\|x\|=\|y\|=1$ and $\langle x,y \rangle =q,$ we have $\langle Ax,y \rangle=x_2\overline{y_1}$.
	Thus 
	\begin{equation*}
		w_q(A)=\sup_{
			\|x\|=1,\\ 
			\|y\|=1\\, 
			\langle x,y \rangle=q 
		}|x_2\overline{y_1}|.
	\end{equation*}
	If we take $x=\left(\frac{1}{\sqrt{2}},\frac{1}{\sqrt{2}}\right)$ and $y=\left(\frac{{q}-\sqrt{1-q^2}}{\sqrt{2}}, \frac{{q}+\sqrt{1-q^2}}{\sqrt{2}}\right)$, then 
	$w_q(A) \ge \frac{q+\sqrt{1-q^2}}{{2}}$. Also, $w(A) =\frac{1}{2}$. In this case $w_q(A) > w(A),~ q\in (0,1)$. In general, there is no relation between classical numerical range and $q$- numerical range. Our next result is an attempt in this direction for sectorial matrices. 
	\begin{theorem}\label{thmrelation}
		Let $ A \in \prod_{s,\alpha}^n$ and $q \in \mathcal{D}$. Then 
		\begin{itemize}
			
			\item [(a)]$
			w_q(A) \le \left(\sqrt{(1-|q|^2)\left(1+2\sin^2(\alpha)\right)}+ |q|\right) w(A),
			$
			\item [(b)]$
			w_q(A) \le \sqrt{1+\sin^2(\alpha)}w(A).
			$
		\end{itemize}	
	\end{theorem}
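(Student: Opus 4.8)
The plan is to exploit the representation of $y$ relative to $x$ set up just before the statement. If $|q|=1$ then necessarily $y=\overline q\,x$, so $|\langle Ax,y\rangle|=|\langle Ax,x\rangle|\le w(A)$ and both inequalities hold at once (their right–hand sides dominate $w(A)$); so assume $|q|<1$. For a unit vector $x$, every admissible $y$ can be written $y=\overline q\,x+\sqrt{1-|q|^2}\,z$ with $z\perp x$, $\|z\|=1$, and then
\[
\langle Ax,y\rangle=q\,\langle Ax,x\rangle+\sqrt{1-|q|^2}\,\langle Ax,z\rangle .
\]
Both parts will follow by estimating the two summands and taking the supremum over all such $x$ and $z$.

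For part (a) I would only use the triangle inequality together with $|\langle Ax,x\rangle|\le w(A)$ and the crude bound $|\langle Ax,z\rangle|\le\|Ax\|\le\|A\|$, and then invoke Lemma \ref{it 1.3}, $\|A\|\le\sqrt{1+2\sin^2\alpha}\,w(A)$. This gives $|\langle Ax,y\rangle|\le\big(|q|+\sqrt{(1-|q|^2)(1+2\sin^2\alpha)}\big)w(A)$, and taking the supremum over admissible $x,y$ yields (a).

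Part (b) is the genuine refinement, and it is where I expect the work to be: the route of (a) is too wasteful, since bounding $|\langle Ax,z\rangle|$ by $\|A\|$ only recovers the constant of (a), and a plain Cauchy--Schwarz applied to the unit vector $(|q|,\sqrt{1-|q|^2})$ collapses to $w_q(A)\le\|A\|$. Instead I would bring in the Cartesian decomposition $A=\mathcal R(A)+i\mathcal I(A)$: write $\langle Ax,z\rangle=\langle\mathcal R(A)x,z\rangle+i\langle\mathcal I(A)x,z\rangle$ and $\langle Ax,x\rangle=\langle\mathcal R(A)x,x\rangle+i\langle\mathcal I(A)x,x\rangle$, peel off the imaginary contributions and control them jointly by $\|\mathcal I(A)x\|\le\|\mathcal I(A)\|\le\sin\alpha\,w(A)$ via Lemma \ref{il1.6}, and control the $\mathcal R(A)$–part using $\mathcal R(A)\ge 0$ together with the Pythagorean identity $\|\mathcal R(A)x\|^2=|\langle\mathcal R(A)x,x\rangle|^2+\|\mathcal R(A)x-\langle\mathcal R(A)x,x\rangle x\|^2\le w(\mathcal R(A))^2\le w(A)^2$, noting that since $z\perp x$ and $\|z\|=1$ one has $|\langle\mathcal R(A)x,z\rangle|\le\|\mathcal R(A)x-\langle\mathcal R(A)x,x\rangle x\|$. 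Assembled correctly this should produce $|\langle Ax,y\rangle|^2\le w(A)^2+\sin^2\alpha\,w(A)^2=(1+\sin^2\alpha)\,w(A)^2$ for every admissible $x,y$ and every $q\in\mathcal D$, which is (b). An alternative I would keep in reserve: compress $A$ to the two–dimensional span of $x$ and $y$; the compression lies in $\prod_{s,\alpha}^2$ with numerical radius $\le w(A)$, so it suffices to prove (b) for $2\times2$ sectorial matrices, where the explicit description of the $q$–numerical range of a $2\times2$ matrix can be used directly.

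The main obstacle in (b) is the bookkeeping. The $\mathcal I(A)$–term enters \emph{both} $\langle Ax,x\rangle$ and $\langle Ax,z\rangle$ and must be charged only once — the sum of the squares of $\langle\mathcal I(A)x,x\rangle$ and $\langle\mathcal I(A)x,z\rangle$ is bounded by $\|\mathcal I(A)x\|^2\le\sin^2\alpha\,w(A)^2$, not by $2\sin^2\alpha\,w(A)^2$. Equally, when one squares $|\langle\mathcal R(A)x,y\rangle+i\langle\mathcal I(A)x,y\rangle|$ a cross term $2\,|\langle\mathcal R(A)x,y\rangle|\,|\langle\mathcal I(A)x,y\rangle|$ appears, and discarding it naively costs the worse constant $(1+\sin\alpha)^2$; it has to be absorbed by using that the ``diagonal'' quantity $|\langle Ax,x\rangle|$ and the ``off–diagonal'' quantity $\langle Ax,z\rangle$ of a sectorial matrix cannot both be near their extreme values simultaneously. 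Making that trade–off quantitative, so that the coefficient of $w(A)^2$ comes out exactly $1+\sin^2\alpha$, is the crux of the argument.
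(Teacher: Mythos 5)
Your part (a) coincides with the paper's argument verbatim: the representation $y=\overline{q}x+\sqrt{1-|q|^2}\,z$, the triangle inequality, the crude bound $|\langle Ax,z\rangle|\le\|A\|$, and Lemma \ref{it 1.3}; that part is complete and correct.

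Part (b), however, is a plan rather than a proof, and the step you yourself call ``the crux'' is exactly what is missing. After regrouping, the quantity to be controlled is $|\langle\mathcal{R}(A)x,y\rangle+i\langle\mathcal{I}(A)x,y\rangle|$, where both inner products are genuinely complex because $y\ne x$. The inequality $|u+iv|\le\sqrt{|u|^2+|v|^2}$ is valid for real $u,v$ but fails for complex ones (take $u=1$, $v=-i$); indeed $|u+iv|^2=|u|^2+|v|^2-2\,\mathcal{I}(\overline{u}v)$, so the root-sum-of-squares bound needs $\mathcal{I}(\overline{u}v)\ge 0$. Your Bessel/Pythagoras bookkeeping, assembled as you describe it, only delivers $|\langle\mathcal{R}(A)x,y\rangle|\le\|\mathcal{R}(A)\|\le w(A)$ and $|\langle\mathcal{I}(A)x,y\rangle|\le\|\mathcal{I}(A)\|\le\sin(\alpha)\,w(A)$, hence via the triangle inequality the weaker constant $1+\sin(\alpha)$, not $\sqrt{1+\sin^2(\alpha)}$; you acknowledge that making the trade-off quantitative is the heart of the matter, but you do not carry it out, and the reserve option (compression to the span of $x,y$ and the $2\times 2$ description of $W_q$) is likewise only announced. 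So in your proposal part (b) remains unestablished. For comparison, the paper's own proof of (b) is much shorter and does not use the $z$-decomposition at all: it bounds $|\langle Ax,y\rangle|$ by $\sqrt{\langle\mathcal{R}(A)x,y\rangle^2+\langle\mathcal{I}(A)x,y\rangle^2}\le\sqrt{\|\mathcal{R}(A)\|^2+\|\mathcal{I}(A)\|^2}\,\|x\|\|y\|$ and then applies $\|\mathcal{R}(A)\|\le w(A)$ together with Lemma \ref{il1.6} --- that is, it takes precisely the root-sum-of-squares step for the two complex inner products that you were (rightly) uneasy about, with no cross-term analysis. Your instinct thus locates the sensitive point of (b) accurately, but to have a proof you must actually close it: either justify the sign condition $\mathcal{I}\bigl(\overline{\langle\mathcal{R}(A)x,y\rangle}\,\langle\mathcal{I}(A)x,y\rangle\bigr)\ge 0$ (or an adequate substitute exploiting sectoriality), or complete the quantitative cross-term/compression argument you sketch.
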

	\begin{proof} 
		\begin{itemize}
			
			\item[(a)] 
			When $|q|=1$, using \cite[Proposition 3.1(f)]{gau2021numerical}, we have $w_q(A)=w(A)$.
			Let $x,y \in \mathcal{H}$ such that $\|x\|=\|y\|=1$ and $\langle x,y \rangle=q$. Then, using the construction described above, $y$ can be expressed as $\overline{q}x+\sqrt{1-|q|^2}z$, where $z \in \mathcal{H}$, $\|z\|=1$ and $\langle x,z \rangle =0$. Therefore,
			\begin{align*}
				|\langle Ax,y \rangle| \le & |\langle Ax,\overline{q}x+\sqrt{1-|q|^2}z \rangle|\\
				\le & |q||\langle Ax,x \rangle|+\sqrt{1-|q|^2}|\langle Ax, z\rangle|\\
				\le & |q||\langle Ax,x \rangle|+\sqrt{1-|q|^2}|\|A\|. 
			\end{align*}
			Using Lemma \ref{it 1.3}, we get
			\begin{eqnarray*}
				| \langle Ax,y \rangle| \le \sqrt{(1-|q|^2)(1+2\sin^2(\alpha))}w(A)+ |q| |\langle Ax,x \rangle|.
			\end{eqnarray*}
			This implies,
			\begin{equation*}
				| \langle Ax,y \rangle| \le \left(\sqrt{(1-|q|^2)(1+2\sin^2(\alpha))}+ |q| \right)w(A).
			\end{equation*}
			Taking supremum over all $x,y$ with $\|x\|=\|y\|=1$ and $\langle x,y \rangle=q$, we have 
			\begin{equation*}
				w_q(A) \le \left(\sqrt{(1-|q|^2)(1+2\sin^2(\alpha))}+ |q| \right)w(A).
			\end{equation*}
			\item [(b)]Using the cartesian decomposition $A =  \mathcal{R}(A) + i  \mathcal{I}(A),$ we obtain
			\begin{align*}
				| \langle Ax,y \rangle| \le & \sqrt{\langle \mathcal{R}(A)x,y \rangle^2 +\langle \mathcal{I}(A)x,y \rangle^2 }\\
				\le & \left( \sqrt{\|\mathcal{R}(A)\|^2+\|\mathcal{I}(A)\|^2}\right) \|x\|\|y\|.
			\end{align*}
			Using $\|\mathcal{R}(A)\| \le w(A)$ and Lemma \ref{il1.6}, we have
			\begin{equation*}
				| \langle Ax,y \rangle \le  \left( \sqrt{w^2(A)+\sin^2(\alpha)w^2(A)}\right) \|x\|\|y\|.
			\end{equation*}
			Taking supremum over all $x,y$ on both sides with $\|x\|=\|y\|=1$ and $\langle x,y \rangle=q$, we have 
			\begin{equation*}
				w_q(A) \le \sqrt{1+\sin^2(\alpha)}w(A).
			\end{equation*}
		\end{itemize}  
	\end{proof}
	\begin{remark}
		Theorem \ref{thmrelation} enables us to explore the ratio $\frac{w_q(A)}{w(A)}$ in terms of $q$ and the sectorial index $\alpha$. Also, for different values of $\alpha$ $(\text{say}~ 0, \frac{\pi}{4}, \frac{\pi}{2})$ we can compare the bounds in part $(a)$ and part $(b)$ as follows:
		\begin{itemize}
			\item [(i)] For $\alpha=\frac{\pi}{2}$, $\sqrt{2}=\sqrt{1+\sin^2(\alpha)} \le q+ \sqrt{3(1-q^2)}=\sqrt{(1-|q|^2)\left(1+2\sin^2(\alpha)\right)}+ |q|$ holds when $ q \in [0, 0.97]$,

			
			\item [(ii)] For $\alpha=\frac{\pi}{4}$, $\sqrt{\frac{3}{2}}=\sqrt{1+\sin^2(\alpha)} \le q+ \sqrt{2(1-q^2)}=\sqrt{(1-|q|^2)\left(1+2\sin^2(\alpha)\right)}+ |q|$ holds when $ q \in [0,0.98]$,
			
			\item [(iii)] For $\alpha=0$, $1=\sqrt{1+\sin^2(\alpha)} \le q+ \sqrt{1-q^2}=\sqrt{(1-|q|^2)\left(1+2\sin^2(\alpha)\right)}+ |q|$ holds when $ q \in [0,1)$.
			\begin{figure}[!htb]
				\begin{minipage}[b]{0.32\textwidth}
					
					\includegraphics[width=\linewidth]{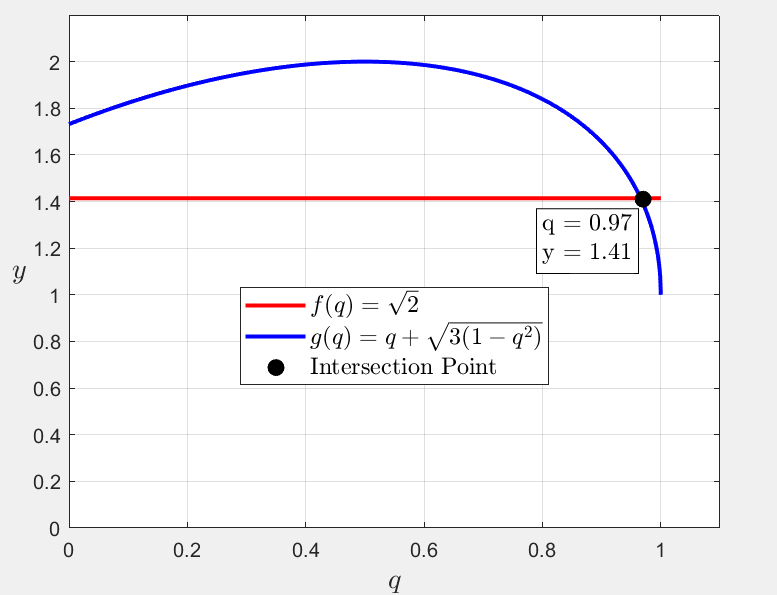}
					\begin{small}
						\begin{equation*}
							\alpha = \pi/2
						\end{equation*}
					\end{small}
				\end{minipage}\hfill
				\begin{minipage}[b]{0.32\textwidth}
					
					\includegraphics[width=\linewidth]{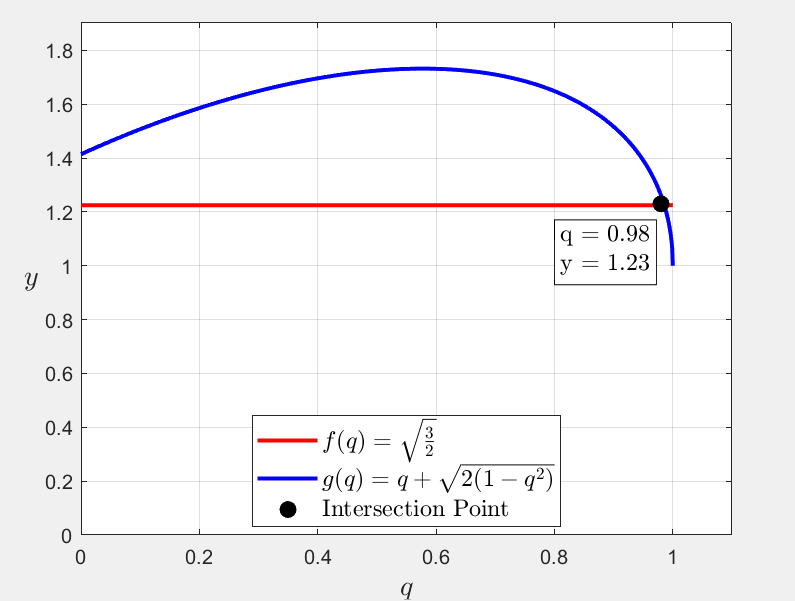}
					\begin{small}
						\begin{equation*}
							\alpha = \pi/4
						\end{equation*} 
					\end{small}
				\end{minipage}\hfill
				\begin{minipage}[b]{0.32\textwidth}
					\raisebox{+0.0262\height}{\includegraphics[width=\linewidth]{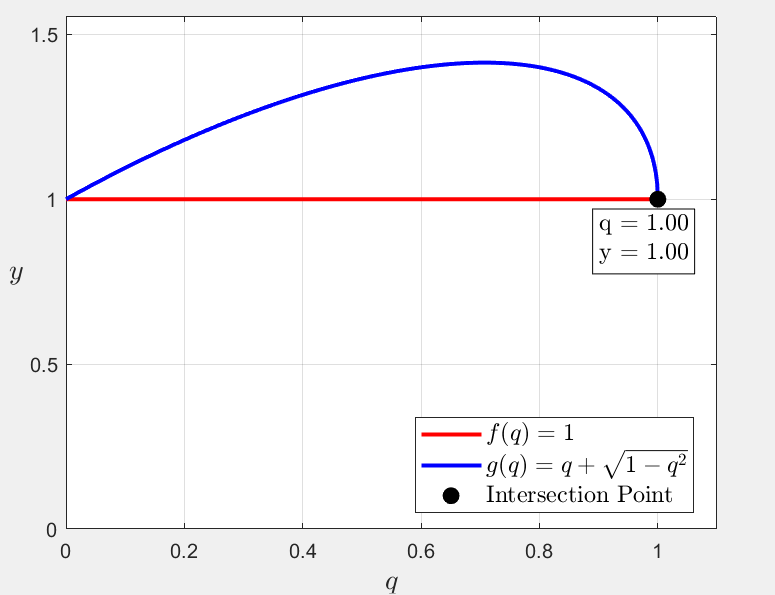}}
					\begin{small}
						\begin{equation*}
							\alpha = 0
						\end{equation*} 
					\end{small}
				\end{minipage}
			\end{figure}

			
		\end{itemize}
		The analysis of the figures demonstrates that the bound presented in part $(b)$ is a refinement of the bound presented in part $(a)$ within a specific range of $q$.
		As the value of $\alpha$ decreases, the region of $q$ where refinement occurs expands correspondingly. This observation highlights the relationship between the parameter $\alpha$ and the extent of refinement achieved in the specified region of $q$.
			

	\end{remark}
	Now we are ready to give $q$-numerical radius version of Theorem 1 \cite{kittaneh2005numerical} for sectorial matrices. Furthermore, this result serves as a refinement of Theorem 3.1 in \cite{fakhri2024q}.
	\begin{theorem}\label{lt}
		Let $ A \in \prod_{s,\alpha}^n$ and $q \in \mathcal{D'}$. Then 
		\begin{equation*}\label{eq1.16}
			\frac{|q|^2\cos^2\alpha}{2}	\|A^*A+AA^*\|  \le w_q^2(A)\le \left(\sqrt{(1-|q|^2)\left(1+2sin^2(\alpha)\right)}+ |q|\right)^2 \frac{\|A^*A+AA^*\|}{2}.
		\end{equation*}
	\end{theorem}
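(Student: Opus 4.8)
The plan is to prove the two inequalities separately, using the already-established bounds as the main machinery. For the \emph{upper bound}, note that by Kittaneh's identity (inequality \eqref{eq3.2}) applied in the classical case we have $w^2(A) \le \frac{1}{2}\|A^*A + AA^*\|$. Combining this with Theorem \ref{thmrelation}(a), which gives $w_q(A) \le \left(\sqrt{(1-|q|^2)(1+2\sin^2(\alpha))} + |q|\right) w(A)$, yields
\begin{equation*}
	w_q^2(A) \le \left(\sqrt{(1-|q|^2)(1+2\sin^2(\alpha))} + |q|\right)^2 w^2(A) \le \left(\sqrt{(1-|q|^2)(1+2\sin^2(\alpha))} + |q|\right)^2 \frac{\|A^*A + AA^*\|}{2},
\end{equation*}
which is exactly the claimed upper bound. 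So the upper bound is essentially immediate once Theorem \ref{thmrelation}(a) and \eqref{eq3.2} are in hand.

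For the \emph{lower bound}, the approach is to pass to the real part. By Corollary \ref{co1.6}(a) and relation \eqref{ri}, we have $|q|\,\|\mathcal{R}(A)\| \le w_q(\mathcal{R}(A)) \le w_q(A)$, hence $w_q^2(A) \ge |q|^2 \|\mathcal{R}(A)\|^2$. Now I would use Lemma \ref{normr1} with the unitarily invariant norm taken to be the operator norm: $\cos(\alpha)\|A\| \le \|\mathcal{R}(A)\|$, so $w_q^2(A) \ge |q|^2 \cos^2(\alpha)\|A\|^2$. The final step is to relate $\|A\|^2$ to $\|A^*A + AA^*\|$. Since $A^*A \le A^*A + AA^*$ (as $AA^* \ge 0$) and $\|A^*A\| = \|A\|^2$, we get $\|A\|^2 \le \|A^*A + AA^*\|$; but this gives the wrong direction. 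Instead, I should use $\|A^*A + AA^*\| \le \|A^*A\| + \|AA^*\| = 2\|A\|^2$, i.e. $\|A\|^2 \ge \frac{1}{2}\|A^*A + AA^*\|$. Therefore
\begin{equation*}
	w_q^2(A) \ge |q|^2\cos^2(\alpha)\|A\|^2 \ge \frac{|q|^2\cos^2(\alpha)}{2}\|A^*A + AA^*\|,
\end{equation*}
which is the desired lower bound.

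The main obstacle — or rather the main point requiring care — is the direction of the norm inequality relating $\|A\|^2$ and $\|A^*A + AA^*\|$ in the lower bound; one must use the triangle inequality $\|A^*A + AA^*\| \le 2\|A\|^2$ rather than the operator monotonicity estimate. Everything else reduces to chaining together Theorem \ref{thmrelation}(a), Corollary \ref{co1.6}, Lemma \ref{normr1}, and the classical Kittaneh inequality \eqref{eq3.2}, so no genuinely new estimate is needed. I would also remark afterward that setting $q=1$ recovers \eqref{eq3.2} when $\alpha = 0$ (the positive-matrix case), and that for suitable $\alpha$ this refines Theorem 3.1 of \cite{fakhri2024q} and the bounds in \eqref{q2}.
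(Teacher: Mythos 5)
Your argument is correct, and the upper bound is proved exactly as in the paper: combine Theorem \ref{thmrelation}(a) with Kittaneh's inequality $w^2(A)\le\frac{1}{2}\|A^*A+AA^*\|$ (the paper's text cites \eqref{eq2.1} at that step, but it is clearly using \eqref{eq3.2}, i.e.\ your citation is the right one). For the lower bound you take a genuinely different, and in fact more elementary, route. The paper starts from the Cartesian identity $\|A^*A+AA^*\|=2\|\mathcal{R}^2(A)+\mathcal{I}^2(A)\|\le 2\bigl(\|\mathcal{R}(A)\|^2+\|\mathcal{I}(A)\|^2\bigr)$ and then controls the two pieces separately: $\|\mathcal{R}(A)\|\le\frac{1}{|q|}w_q(A)$ via Corollary \ref{co1.6}(a) and \eqref{ri}, and $\|\mathcal{I}(A)\|\le\frac{1}{|q|}\tan(\alpha)\,w_q(A)$ via Lemma \ref{il1.6} together with Theorem \ref{t1.16}(a); summing gives $\|A^*A+AA^*\|\le\frac{2\sec^2(\alpha)}{|q|^2}w_q^2(A)$. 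You instead never touch $\mathcal{I}(A)$: you chain $w_q(A)\ge|q|\,\|\mathcal{R}(A)\|\ge|q|\cos(\alpha)\|A\|$ (Corollary \ref{co1.6}(a), \eqref{ri}, Lemma \ref{normr1}) with the triangle-inequality estimate $\|A^*A+AA^*\|\le 2\|A\|^2$, and you correctly flag that the direction of this last inequality is the only delicate point. Both chains land on the same constant $\frac{|q|^2\cos^2\alpha}{2}$; your version uses fewer sectoriality inputs (only Lemma \ref{normr1}, essentially re-deriving Theorem \ref{t1.16}(a)), while the paper's Cartesian-decomposition route keeps the real and imaginary contributions visible, which is the structure it reuses elsewhere (e.g.\ in Theorem \ref{t5.12}) and which mirrors Kittaneh's original argument.
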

	\begin{proof}
		From Lemma \ref{il1.6}, we have  
		\begin{equation*}\label{eq1.8}
			\|\mathcal{I}(A)\| \le \sin (\alpha) w(A) \le \sin(\alpha)\|A\| .
		\end{equation*}
		Using Theorem $\ref{t1.16}(a)$, we have 
		\begin{equation}\label{eq1.18}
			\|\mathcal{I}(A)\| \le \frac{1}{|q|}\tan (\alpha) w_q(A).
		\end{equation}
		Now,
		\begin{align*}
			\|A^*A+AA^*\|=&2\|\mathcal{R}^2(A)+\mathcal{I}^2(A)\|\\
			\le & 2(\|\mathcal{R}(A)\|^2+\|\mathcal{I}(A)\|^2).
		\end{align*}
		Using equation (\ref{eq1.18}) and Corollary \ref{co1.6}(a), we have 
		\begin{align*}
			\|A^*A+AA^*\| \le &2 \left(\frac{1}{|q|^2}w_q^2(\mathcal{R}(A))+\tan^2 \alpha \frac{1}{|q|^2}w_q^2(\mathcal{I}(A))\right)\\
			\le & 2 \left(\frac{1}{|q|^2}w_q^2(A)+\tan^2 \alpha \frac{1}{|q|^2}w_q^2(A)\right).
		\end{align*}
		Thus we have,
		\begin{equation*}
			\frac{|q|^2\cos^2\alpha}{2}	\|A^*A+AA^*\| \le w_q^2(A).
		\end{equation*}
		For the other part, Theorem \ref{thmrelation}(a) implies,
		\begin{equation*}
			w_q^2(A) \le \left(\sqrt{(1-|q|^2)\left(1+2\sin^2(\alpha)\right)}+ |q|\right)^2 w^2(A).   
		\end{equation*}
		Using relation \eqref{eq2.1}, we obtain
		\begin{equation*}
			w_q^2(A) \le \left(\sqrt{(1-|q|^2)\left(1+2\sin^2(\alpha)\right)}+|q|\right)^2\frac{\|A^*A+AA^*\|}{2}.  
		\end{equation*}
	\end{proof}
	
	\begin{remark}
		\begin{itemize}
			\item [(i)]If $q \in (0,1)$, the upper bound of $w_q^2(A)$ of the aforementioned theorem refines the upper bound of $w_q^2(A)$ given in relation \eqref{q2}. Furthermore, when $\cos(\alpha) \geq \frac{1}{\sqrt{2}(2-q^2)}$, the lower bound of $w_q^2(A)$ in the aforementioned theorem provides an improvement over the lower bound of $w_q^2(A)$ in relation \eqref{q2}.
			
			\item[(ii)]   If $q=1$, Theorem \ref{lt} gives us
			\begin{equation}\label{ae}
				\frac{\cos^2\alpha}{2}	\|A^*A+AA^*\|  \le w^2(A)\le \frac{\|A^*A+AA^*\|}{2}.
			\end{equation}
			Clearly, if $\alpha \in (0, \frac{\pi}{3})$ the lower bound of above inequality is a refinement of the lower bound of inequality (\ref{eq3.2}), and if $A$ is a positive matrix then inequality (\ref{ae}) gives us $w(A)=\frac{\|AA^*+A^*A\|}{2}.$  
		\end{itemize}   
	\end{remark}

	Next, we focus on the $q$-numerical radius inequalities of commutator and anti-commutator sectorial matrices.
	\begin{theorem}
		Let $B, C, D \in M_n$, $A \in \prod_{s,\alpha}^n$ and $q \in \mathcal{D'}$. Then
		\begin{equation*}
			|q|w_q(A C B \pm B D A) \leq 2\sec (\alpha)\max \{\|C\|,\|D\|\} w_q(A)\|B\| .   
		\end{equation*}
	\end{theorem}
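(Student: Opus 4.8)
The plan is to reduce everything to the operator norm and then convert the operator norm of $A$ back into $w_q(A)$ using the lower bound already available for sectorial matrices. The starting point is the elementary fact, recalled in the introduction, that $w_q(X)\le\|X\|$ for every $X\in\mathcal{B(H)}$; applied to $X=ACB\pm BDA$ this gives
\[
|q|\,w_q(ACB\pm BDA)\le |q|\,\|ACB\pm BDA\|.
\]
Next I would apply the triangle inequality and submultiplicativity of the operator norm, $\|ACB\pm BDA\|\le\|ACB\|+\|BDA\|\le\|A\|\,\|B\|\,\|C\|+\|A\|\,\|B\|\,\|D\|$, and then bound $\|C\|+\|D\|\le 2\max\{\|C\|,\|D\|\}$ to obtain
\[
|q|\,w_q(ACB\pm BDA)\le 2|q|\,\|A\|\,\|B\|\,\max\{\|C\|,\|D\|\}.
\]

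The one genuinely "sectorial" ingredient enters now: since $A\in\prod_{s,\alpha}^n$ and $q\in\mathcal{D'}$, Theorem \ref{t1.16}(a) gives $|q|\cos\alpha\,\|A\|\le w_q(A)$, i.e. $\|A\|\le \dfrac{\sec\alpha}{|q|}\,w_q(A)$ (here $q\ne 0$ is exactly what allows the division). Substituting this into the previous display, the factor $|q|$ cancels and we are left with
\[
|q|\,w_q(ACB\pm BDA)\le 2\sec\alpha\,\max\{\|C\|,\|D\|\}\,w_q(A)\,\|B\|,
\]
which is the claimed inequality. Both signs are handled simultaneously because only the triangle inequality was used on the sum $ACB\pm BDA$.

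There is no serious obstacle here; the argument is a short chain of standard norm estimates glued together by Theorem \ref{t1.16}(a). The only point requiring a moment's care is bookkeeping: one must keep the factor $|q|$ on the left throughout so that it precisely absorbs the $1/|q|$ produced when passing from $\|A\|$ to $w_q(A)$, and one must make sure the hypothesis $q\in\mathcal{D'}$ (rather than $q\in\mathcal{D}$) is invoked so that this division is legitimate. If one wanted a marginally sharper statement one could replace $2\max\{\|C\|,\|D\|\}$ by $\|C\|+\|D\|$, but the stated form is the natural "commutator/anti-commutator" packaging.
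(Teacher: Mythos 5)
Your proposal is correct and is essentially the paper's own argument: both reduce $w_q(ACB\pm BDA)$ to the operator norm via the triangle inequality and submultiplicativity, and then convert $\|A\|$ back into $w_q(A)$ through the sectorial lower bound $|q|\cos(\alpha)\|A\|\le w_q(A)$ of Theorem \ref{t1.16}(a). The only cosmetic difference is that the paper routes the norm estimate through a normalization ($\|C\|,\|D\|\le 1$) and successive substitutions $C\mapsto CB$, $D\mapsto BD$, whereas you chain the inequalities directly, which is equivalent.
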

	\begin{proof}
		As the norm satisfies the homogeneity property, we can take $\|C\|\le 1$ and $\|D\| \leq 1$. Consider,
		\[
		\begin{aligned}
			w_q(AC \pm DA) & \leq \|AC \pm DA\| \\
			& \leq 2\|A\|.
		\end{aligned}
		\]
		
		Theorem \ref{t1.16} follows that,
		\begin{equation}\label{newrel}
			w_q(AC \pm D A) \leq \frac{2}{|q|}\sec (\alpha) w_q(A).   
		\end{equation}
		If $C=D=0$ then the required result holds trivially. Let $\max \{ \|C\|, \|D\|\} \ne 0.$
		Then $\left\|\frac{C}{\max \{\|C\|,\|D\|\}}\right\| \leq 1$ and $\left\|\frac{D}{\max \{\|C\|,\|D\|\}}\right\| \leq 1$.
		
		Therefore, by replacing $C$ with $\frac{C}{\max \{\|C\|,\|D\|\}}$ and $D$ with $\frac{D}{\max \{\|C\|,\|D\|\}}$ in relation (\ref{newrel}), we have
		\begin{equation}\label{inequality}
			w_q(AC \pm D A) \leq \frac{2}{|q|}\sec (\alpha)\max \{\|C\|,\|D\|\} w_q(A).   
		\end{equation}
		Again, replacing $C$ with $CB$ and $D$ with $BD$ in inequality (\ref{inequality}), we obtain that
		\begin{align*}
			w_q(ACB \pm BD A) \leq& \frac{2}{|q|}\sec (\alpha)\max \{\|CB\|,\|BD\|\} w_q(A)\\
			\le&\frac{2}{|q|}\sec (\alpha)\max \{\|C\|,\|D\|\} w_q(A)\|B\|.
		\end{align*}
		This completes the proof.
	\end{proof}
	\begin{remark}
		It was obtained in \cite[Theorem 11]{fong1983unitarily} that
		\begin{equation}\label{newe3}
			w(A B \pm B A) \leq 2 \sqrt{2}w(A)\|B\|.   
		\end{equation}
		Take $C=D=I$ in the aforementioned theorem, we have
		\begin{equation}\label{newe1}
			w_q(A B \pm B A) \leq \frac{2}{|q|} \sec (\alpha) w_q(A)\|B\|.
		\end{equation}
		
		If $q=1$, relation (\ref{newe1}) implies that
		\begin{equation}\label{newe22}
			w(A B \pm B A) \leq 2 \sec (\alpha) w(A)\|B\|.
		\end{equation}
		
		Thus, relation (\ref{newe22}) is a refinement of the relation (\ref{newe3}) if $\alpha \in\left[0, \frac{\pi}{4}\right)$.  
	\end{remark}
	Replacing $A$ and $B$ in relation (\ref{newe1}), we can obtain the following corollary easily.
	\begin{corollary}
		If $A,B \in \prod_{s,\alpha}^n$ and $q \in \mathcal{D'}$. Then  
		\begin{equation*}
			|q|w_q(A B \pm B A) \leq 2 \sec (\alpha) \min \{w_q(A)\|B\|,w_q(B)\|A\|\}.
		\end{equation*} 
	\end{corollary}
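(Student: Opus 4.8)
The plan is to read the desired inequality off relation \eqref{newe1}, which states that whenever $A \in \prod_{s,\alpha}^n$, $B \in M_n$, and $q \in \mathcal{D'}$, one has $w_q(AB \pm BA) \le \tfrac{2}{|q|}\sec(\alpha)\, w_q(A)\|B\|$. In the present setting both $A$ and $B$ are sectorial, so in particular $B \in M_n$ and \eqref{newe1} applies verbatim, giving
\[
|q|\,w_q(AB \pm BA) \le 2\sec(\alpha)\, w_q(A)\|B\|.
\]

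The second half of the minimum comes from exploiting the $A \leftrightarrow B$ symmetry. Since $B \in \prod_{s,\alpha}^n$ as well, applying \eqref{newe1} with the roles of $A$ and $B$ interchanged yields $w_q(BA \pm AB) \le \tfrac{2}{|q|}\sec(\alpha)\, w_q(B)\|A\|$. The only point requiring a word of care is that $w_q(BA \pm AB) = w_q(AB \pm BA)$: for the anti-commutator this is immediate, while for the commutator it follows from $BA - AB = -(AB - BA)$ together with the elementary fact $w_q(-X) = w_q(X)$ (the $q$-numerical range of $-X$ is the negation of that of $X$, hence has the same radius). Combining the two estimates and keeping the smaller right-hand side gives
\[
|q|\,w_q(AB \pm BA) \le 2\sec(\alpha)\,\min\{w_q(A)\|B\|,\ w_q(B)\|A\|\},
\]
which is exactly the assertion.

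I do not anticipate any genuine obstacle here: the corollary is a one-step consequence of \eqref{newe1} combined with the reflection symmetry of the commutator and anti-commutator, and the sign bookkeeping in the minus case is the only detail that merits an explicit sentence.
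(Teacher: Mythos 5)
Your proposal is correct and follows essentially the same route as the paper, which simply says the corollary follows by ``replacing $A$ and $B$'' in relation \eqref{newe1} and taking the minimum of the two resulting bounds. Your extra remark that $w_q(BA-AB)=w_q(-(AB-BA))=w_q(AB-BA)$, using $w_q(-X)=w_q(X)$, is a detail the paper leaves implicit but is exactly the right justification.
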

	

	It is noteworthy that the literature does not extensively address non-integral powers of the $q$-numerical radius. In our forthcoming results, we address this gap by establishing a relation between $w^t_q(A)$ and $w_q(A^t)$ for $A \in \prod_{s,\alpha}^n$. It is well-known that $\|A^t\|=\|A\|^t$ for positive matrix $A$.
	The significance of the forthcoming result lies in its ability to establish a relationship between $w_q(A^{-1})$ and $w_q^{-1}(A)$. In general there is no such relation between $w_q(A^{-1})$ and $w_q^{-1}(A)$. For example, let $A=\begin{bmatrix}
		1 & 0\\
		0 & 2
	\end{bmatrix}$ and $q \in (0,1)$. Then,  $w_q(A)=\frac{3q+1}{2}$ and $ w_q(A^{-1})=\frac{3q+1}{4}$. Here $w_q(A^{-1}) \le w_q^{-1}(A)$ if $q \in (0, 0.6095]$ and $w_q^{-1}(A) \le w_q(A^{-1})$ if $q\in [0.6095,1)$.
	\begin{theorem}\label{tthm}
		If $ A \in \prod_{s,\alpha}^n$, $q \in \mathcal{D'}$ and $t \in [0,1]$ then
		\begin{itemize}
			\item[(a)] $|q|^{t+1}\cos^t(\alpha)w^t_q(A) \le |q|^tw_q(A^t) \le \sec^{2t}(\alpha)\sec(t \alpha)w^t_q(A),$
			\item[(b)] $|q|^{t+1}\cos(t \alpha)\cos^{2t}(\alpha)w_{q}^{-t}(A)\le w_q(A^{-t}).$
		\end{itemize}   
	\end{theorem}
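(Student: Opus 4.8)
The plan is to reduce all three inequalities to one small toolkit: the norm comparison $\cos(\beta)\|B\|\le\|\mathcal{R}(B)\|\le\|B\|$ for $B\in\prod_{s,\beta}^n$ (Lemma~\ref{normr1}), the operator comparisons between $(\mathcal{R}(A))^{\pm t}$ and $\mathcal{R}(A^{\pm t})$ (Lemmas~\ref{il1.7} and~\ref{il1.9}), the two–sided estimate $|q|\cos\alpha\,\|A\|\le w_q(A)\le\|A\|$ (Theorem~\ref{t1.16}(a)), and the elementary consequence of Corollary~\ref{co1.6}(a) together with~\eqref{ri}, namely $|q|\|\mathcal{R}(A)\|\le w_q(\mathcal{R}(A))\le w_q(A)$, which rearranges to $\|\mathcal{R}(A)\|\le\frac1{|q|}w_q(A)$ (here $q\in\mathcal D'$ is used to divide). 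Throughout I use that $A\in\prod_{s,\alpha}^n$ forces $\mathcal{R}(A)>0$, that $A^t\in\prod_{s,t\alpha}^n$ and $A^{-t}=(A^{-1})^t\in\prod_{s,t\alpha}^n$ (Lemma~\ref{il1.2} and the remark following it), hence $\mathcal{R}(A^{\pm t})>0$ as well, and that on positive matrices the operator norm is monotone ($0\le P\le Q\Rightarrow\|P\|\le\|Q\|$) and satisfies $\|P^s\|=\|P\|^s$ for $s\ge0$.

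For the lower bound in (a) I would chain, using \eqref{ri}, Corollary~\ref{co1.6}(a) applied to $A^t$, the right–hand inequality of Lemma~\ref{il1.7}, positivity of $\mathcal{R}(A)$, Lemma~\ref{normr1}, and $w_q(A)\le\|A\|$:
\begin{align*}
w_q(A^t) &\ge w_q(\mathcal{R}(A^t)) \ge |q|\,\|\mathcal{R}(A^t)\| \ge |q|\,\|(\mathcal{R}(A))^t\| = |q|\,\|\mathcal{R}(A)\|^t\\
&\ge |q|\cos^t(\alpha)\|A\|^t \ge |q|\cos^t(\alpha)\,w_q^t(A),
\end{align*}
and then multiply through by $|q|^t$. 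For the upper bound in (a) I would run the dual chain, starting from $w_q(A^t)\le\|A^t\|$, then Lemma~\ref{normr1} for $A^t\in\prod_{s,t\alpha}^n$, the left–hand inequality of Lemma~\ref{il1.7}, and $\|\mathcal{R}(A)\|\le\frac1{|q|}w_q(A)$:
\begin{align*}
w_q(A^t) &\le \|A^t\| \le \sec(t\alpha)\|\mathcal{R}(A^t)\| \le \sec(t\alpha)\sec^{2t}(\alpha)\|(\mathcal{R}(A))^t\|\\
&= \sec(t\alpha)\sec^{2t}(\alpha)\|\mathcal{R}(A)\|^t \le \frac{\sec(t\alpha)\sec^{2t}(\alpha)}{|q|^t}\,w_q^t(A),
\end{align*}
after which multiplying by $|q|^t$ gives exactly the stated bound.

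For (b) I would apply Theorem~\ref{t1.16}(a) to $A^{-t}\in\prod_{s,t\alpha}^n$ to obtain $w_q(A^{-t})\ge|q|\cos(t\alpha)\|A^{-t}\|$, pass to the real part via $\|A^{-t}\|\ge\|\mathcal{R}(A^{-t})\|$, and invoke the right–hand inequality of Lemma~\ref{il1.9} with exponent $-t\in[-1,0]$, which after multiplying by $\cos^{2t}(\alpha)>0$ reads $\cos^{2t}(\alpha)(\mathcal{R}(A))^{-t}\le\mathcal{R}(A^{-t})$, so that $\|\mathcal{R}(A^{-t})\|\ge\cos^{2t}(\alpha)\|(\mathcal{R}(A))^{-t}\|$. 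It remains to bound $\|(\mathcal{R}(A))^{-t}\|$ from below: writing $\|(\mathcal{R}(A))^{-t}\|=\|(\mathcal{R}(A))^{-1}\|^t$, using $\|(\mathcal{R}(A))^{-1}\|\,\|\mathcal{R}(A)\|\ge\|I\|=1$ and $\|\mathcal{R}(A)\|\le\frac1{|q|}w_q(A)$ gives $\|(\mathcal{R}(A))^{-t}\|\ge\|\mathcal{R}(A)\|^{-t}\ge|q|^t w_q^{-t}(A)$. Assembling,
\begin{align*}
w_q(A^{-t}) &\ge |q|\cos(t\alpha)\,\|A^{-t}\| \ge |q|\cos(t\alpha)\,\|\mathcal{R}(A^{-t})\|\\
&\ge |q|\cos(t\alpha)\cos^{2t}(\alpha)\,\|(\mathcal{R}(A))^{-t}\| \ge |q|^{t+1}\cos(t\alpha)\cos^{2t}(\alpha)\,w_q^{-t}(A).
\end{align*}

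The argument is essentially bookkeeping, so there is no deep obstacle; the two points needing care are (i) that the operator inequalities in Lemmas~\ref{il1.7} and~\ref{il1.9} may be promoted to operator–norm inequalities only because every matrix appearing ($\mathcal{R}(A)$, $\mathcal{R}(A^{\pm t})$, $(\mathcal{R}(A))^{\pm t}$) is positive — which is exactly where sectoriality of $A$ and of $A^{\pm t}$ is used — and (ii) keeping the cosine factors straight, namely $\cos(t\alpha)$ arising from Lemma~\ref{normr1}/Theorem~\ref{t1.16}(a) at sectorial index $t\alpha$, versus $\cos^t(\alpha)$ and $\cos^{2t}(\alpha)$ arising from Lemma~\ref{normr1} at index $\alpha$ and from Lemmas~\ref{il1.7}–\ref{il1.9}, respectively.
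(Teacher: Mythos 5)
Your proposal is correct and follows essentially the same route as the paper's proof: the same chains through $w_q(\mathcal{R}(A^{\pm t}))$, Lemmas \ref{il1.7}/\ref{il1.9}, Lemma \ref{normr1} at indices $\alpha$ and $t\alpha$, Theorem \ref{t1.16}(a), Corollary \ref{co1.6}(a) and relation \eqref{ri}. The only differences are cosmetic, e.g.\ you merge Corollary \ref{co1.6}(a) with \eqref{ri} into the single estimate $\|\mathcal{R}(A)\|\le\frac{1}{|q|}w_q(A)$ and note the equality $\|(\mathcal{R}(A))^t\|=\|\mathcal{R}(A)\|^t$ where the paper writes an inequality.
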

	\begin{proof}
		\begin{itemize}
			\item [(a)] By Lemma \ref{normr1}, we have
			\begin{equation*}
				\cos(\alpha) w(A) \le \|\mathcal{R}(A)\|.
			\end{equation*}  
			For $t \in [0,1]$, Lemma \ref{il1.2} implies, 
			\begin{equation*}
				w_q(A^t)\le \sec{(t \alpha)}\|\mathcal{R}(A^t)\|.
			\end{equation*}
			Now, 
			\begin{align*}
				w_q(A^t)
				\le& \sec(t\alpha)\sec^{2t}(\alpha) \|\mathcal{R}^t(A)\|\\
				= & \sec(t\alpha)\sec^{2t}(\alpha)\|\mathcal{R}(A)\|^t\\
				\le &  \left( \frac{1}{|q|}\right) ^t\sec(t\alpha)\sec^{2t}(\alpha)w^t_q(\mathcal{R}(A)) \\
				\le &  \left( \frac{1}{|q|}\right) ^t\sec(t\alpha)\sec^{2t}(\alpha)w^t_q(A),
			\end{align*}
			where the first inequality follow from  Lemma \ref{il1.7} respectively and the last two inequalities follow from Corollary \ref{co1.6} and relation (\ref{ri}) respectively.

			For the other inequality, we have
			\begin{align*}
				w_q(A^t) \ge & w_q(\mathcal{R}(A^t)) \\ \ge & {|q|}\|\mathcal{R}(A^t)\|\\
				\ge& {|q|}\|\mathcal{R}^t(A)\|\\
				\ge & |q|\|\mathcal{R}(A)\|^t\\
				\ge & |q|\cos^t(\alpha) \|A\|^t\\
				\ge & |q|\cos^t(\alpha) w^t_q(A)
			\end{align*}
			where the first, second, third, and fifth inequalities follow by applying relation (\ref{ri}), Corollary \ref{co1.6}, Lemma {\ref{il1.7}}, and Lemma \ref{normr1} respectively.
			Finally, we have 
			\begin{equation*}
				|q|\cos^t(\alpha) w^t_q(A) \le w_q(A^t) \le \frac{1}{|q|^t}\sec^{2t}(\alpha)\sec(t \alpha)w^t_q(A).
			\end{equation*}
			\item[(b)]  	Using Theorem \ref{t1.16} and Lemma \ref{normr1}, we obtain that 			
			\begin{align*}
				w_q(A^{-t}) \ge & |q|\cos(t \alpha) \|A^{-t}\| \\
				\ge & |q|\cos(t \alpha) \|\mathcal{R}(A^{-t})\|.
			\end{align*}
			Using Lemma \ref{il1.9}, Corollary \ref{co1.6}(a) and the fact that $\|A\|^{-1} \le \|A^{-1}\|$, where $A$ is invertible matrix, we have 
			\begin{align*}
				w_q(A^{-t})	\ge & |q|\cos(t \alpha) \cos^{2t}(\alpha)\|\mathcal{R}^{-t}(A)\|\\
				\ge & |q|\cos(t \alpha) \cos^{2t}(\alpha)\|\mathcal{R}(A)\|^{-t}\\
				\ge & |q|^{t+1}\cos(t \alpha) \cos^{2t}(\alpha)w^{-t}_q(\mathcal{R}(A))\\
				\ge& |q|^{t+1}\cos(t \alpha) \cos^{2t}(\alpha)w^{-t}_q(A).
			\end{align*}

			Hence the required result.   
		\end{itemize}
		
		
	\end{proof}
	\begin{remark}
		\begin{itemize}
			\item [(i)] For $q=1$, the lower bound in the part $(a)$ of the aforementioned theorem represents an improvement over the lower bound mentioned in \cite[Theorem 3.1 ]{bedrani2021numerical} which is as follows
			\begin{equation*}
				\cos(t\alpha)\cos^t(\alpha) w^t(A) \le w(A^t) \le \sec^{2t}(\alpha)\sec(t \alpha)w^t(A).
			\end{equation*}
			\item[(ii)]   Theorem \ref{t1.15} and Theorem \ref{lt}(a) give us the following significant relation,
			\begin{equation}\label{powert}
				|q|^{t+2} \|A\|^t \le |q|w_q(A^t) \le  {\|A\|}^t,
			\end{equation}
			where $A$ is positive matrix.
			If $q=1$ then relation \eqref{powert} gives us the well-known equality $\|A^t\|=\|A\|^t,$ $A$ is positive matrix.
			\item[(iii)] The case $t=1$ in Theorem \ref{tthm}$(b)$ provides
			\begin{equation}\label{num}
				|q|^2\cos^3(\alpha)w_q^{-1}(A) \le w_q(A^{-1}),
			\end{equation}
			and if $A$ is positive matrix, relation (\ref{num}) implies that
			\begin{equation*}
				|q|^2w^{-1}_q(A)  \le w_q(A^{-1}).
			\end{equation*}
		\end{itemize}  
	\end{remark}



\section{$q$-Numerical Radius Inequalities of $2 \times 2$ Operator Matrices}
Assuming $\mathcal{H}$ represents a complex Hilbert space equipped with the inner product $\langle .,. \rangle$, the direct sum $\mathcal{H} \oplus \mathcal{H}$ constructs another Hilbert space, and any operator $ \mathbf{T}\in \mathcal{B}({\mathcal{H} \oplus \mathcal{H}})$ can be represented by a $2 \times 2$ operator matrix in the following manner
\[\mathbf{T}=\begin{bmatrix}
	{Z} & {X}\\
	{Y} & {W}
\end{bmatrix}
\]
where $Z, X, Y,$ and $W$ are in $\mathcal{B}(\mathcal{H})$. In this section, our objective is to analyze the properties of the $q$- numerical radius of operators having off-diagonal representation of the form $\begin{bmatrix}
	{0} & {X}\\
	{Y} & {0}
\end{bmatrix}
$.
Since the $q$-numerical radius is weakly unitarily invariant, i.e. \begin{equation*}
	W_q(U^*TU) = W_q(T)
\end{equation*} for any unitary operator $U$ on $\mathcal{H}$, the following relations can be easily deducted by considering the unitary operators $\begin{bmatrix}
	{I} & {0}\\
	{0} & {e^{-\frac{i\theta}{2}}I}
\end{bmatrix}$, 
$\theta \in \mathbb{R}$ and  $\begin{bmatrix}
	{0} & {I}\\
	{I} & {0}
\end{bmatrix}$ respectively.
\begin{equation}\label{t5.1a}
	w_q \left(\begin{bmatrix}
		{0} & {X}\\
		{Y} & {0}
	\end{bmatrix}\right)=w_q \left(\begin{bmatrix}
		{0} & {X}\\
		{e^{i \theta}Y} & {0}\\
	\end{bmatrix}
	\right)
	~\text{for all}~ \theta \in \mathbb{R}
\end{equation}   
and 
\begin{equation}\label{t5.1b}
	w_q \left(\begin{bmatrix}
		{0} & {X}\\
		{Y} & {0}
	\end{bmatrix}\right)=w_q \left(\begin{bmatrix}
		{0} & {Y}\\
		{X} & {0}\\
	\end{bmatrix}
	\right)
\end{equation}
			Next, some observations of the $q$-numerical radius of $\begin{bmatrix}
				0 & A\\
				A & 0
			\end{bmatrix}$ are mentioned, where $A$ is a hermitian matrix.
			Let $\lambda_1\ge\lambda_2 \ge...\ge \lambda_n$ are the eigenvalues of $A$. Thus by using Theorem 3.4 \cite[p.384]{gau2021numerical}, we have
			$$w_q(A)=\frac{|q|}{2}|\lambda_1+\lambda_n|+\frac{1}{2}|\lambda_1-\lambda_n|.$$
			Therefore, the largest eigenvalue of $\begin{bmatrix}
				0 & A \\
				A & 0
			\end{bmatrix}$ is $\lambda_{max}=\max\{-\lambda_n,\lambda_1\}$ and the smallest eigenvalue of $\begin{bmatrix}
				0 & A \\
				A & 0
			\end{bmatrix}$ is $\lambda_{min}=\min\{-\lambda_1, \lambda_n\}$. Therefore,
			$$w_q\left(\begin{bmatrix}
				0 & A \\
				A & 0
			\end{bmatrix}\right)=\left\lbrace \begin{aligned}
				\frac{|q|}{2}|\lambda_1+\lambda_n|+\frac{1}{2}|\lambda_1-\lambda_n|~\hspace{0.5cm}&\text{if}~\lambda_{max}=\lambda_1,\lambda_{min}=\lambda_n\\
				\frac{|q|}{2}|\lambda_1+\lambda_n|+\frac{1}{2}|\lambda_1-\lambda_n|~\hspace{0.5cm}&\text{if}~\lambda_{max}=-\lambda_n,\lambda_{min}=-\lambda_1\\
				| \lambda_1| ~\hspace{0.5cm}&\text{if}~\lambda_{max}=\lambda_1,\lambda_{min}=-\lambda_1\\
				| \lambda_n| ~\hspace{0.5cm}&\text{if}~\lambda_{max}=-\lambda_n,\lambda_{min}=\lambda_n.
			\end{aligned}\right.$$
			and $w_q(A) \le w_q\left(\begin{bmatrix}
				0 & A \\
				A & 0
			\end{bmatrix}\right)$. 
			
			In particular, let $A$ be a positive $n \times n$ matrix with eigenvalues $\lambda_1\ge\lambda_2 \ge...\ge \lambda_n \ge 0$, we have $\lambda_{max}=\lambda_1$, $\lambda_{min}=-\lambda_1$ and
			\begin{equation*}
				w_q \left( \begin{bmatrix}
					0 & A \\
					A & 0
				\end{bmatrix}\right)=\|A\|=w(A).
			\end{equation*}

			Following this, we will establish the bounds for $q$-numerical radius of $\begin{bmatrix}
				{0} & {Y}\\
				{Y} & {0}
			\end{bmatrix}$ and $\begin{bmatrix}
				{0} & {X}\\
				{Y} & {0}
			\end{bmatrix}$, $X,Y \in \mathcal{B(H)}$. For $Y \in \mathcal{B(H)}$, $\begin{bmatrix}
				{0} & {Y}\\
				{Y} & {0}
			\end{bmatrix}$ satisfies the equality 
			$ w\left(\begin{bmatrix}
				{0} & {Y}\\
				{Y} & {0}
			\end{bmatrix}\right)=w(Y)$ \cite{hirzallah2011numerical}.
			However, a similar assertion may not hold for $q$-numerical radius, as illustrated in the subsequent example.

			Let
			$$\mathbf{T}=\begin{bmatrix}
				0&0&2&0\\
				0&0&0&3\\
				2&0&0&0\\
				0 & 3&0&0
			\end{bmatrix}.$$
			Then $w_q(\mathbf{T})=3$ and $w_q\left( \begin{bmatrix}
				2 & 0\\
				0 &3
			\end{bmatrix}\right)=\frac{5|q|}{2}+\frac{1}{2}$. Clearly,
			$w_q(\mathbf{T})\ne w_q\begin{bmatrix}
				2 & 0\\
				0 &3
			\end{bmatrix}$ except $|q|=1$ but $ w_q\begin{bmatrix}
				2 & 0\\
				0 &3
			\end{bmatrix} \le w_q(\mathbf{T}).$
			
			With this observation, we present the following result.
			
			\begin{proposition}\label{l5.3}
				Let $X,Y \in \mathcal{B(H)}$ and $q \in \mathcal{D'}$, we have 
				\begin{itemize}
					\item [(a)]
					$\max\{w_q(X-Y),w_q(X+Y)\} \le w_q\left(\begin{bmatrix}
						{X} & {Y}\\
						{Y} & {X}
					\end{bmatrix}\right) \le \{ \|X-Y\|, \|X+Y\|\},
					$
					\item [(b)]
					$
					\frac{1}{2}\max\{w_q(X+Y),w_q(X-Y)\}
					\le w_q \left(
					\begin{bmatrix}
						{0} & {X}\\
						{Y} & {0}
					\end{bmatrix}\right)
					\le \frac{1}{2}(\|X+Y\|+\|X-Y\|).$
					
				\end{itemize}		
			\end{proposition}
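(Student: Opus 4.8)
The plan rests on one structural observation: the symmetric block operator $\begin{bmatrix} X & Y \\ Y & X \end{bmatrix}$ is diagonalised by the self‑adjoint unitary $U=\tfrac{1}{\sqrt 2}\begin{bmatrix} I & I \\ I & -I \end{bmatrix}$ on $\mathcal{H}\oplus\mathcal{H}$, namely $U^{*}\begin{bmatrix} X & Y \\ Y & X \end{bmatrix}U=\begin{bmatrix} X+Y & 0 \\ 0 & X-Y \end{bmatrix}$, a fact verified by direct block multiplication. Since $w_q$ is weakly unitarily invariant, these two operators have the same $q$‑numerical radius, so part (a) reduces to estimating $w_q$ of the block‑diagonal operator $\mathrm{diag}(X+Y,\,X-Y)$. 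The upper bound then follows from $w_q(\cdot)\le\|\cdot\|$ and $\|\mathrm{diag}(X+Y,X-Y)\|=\max\{\|X+Y\|,\|X-Y\|\}$ (so the bracketed right‑hand side in the statement is to be read as this maximum). For the lower bound I would compress to a single coordinate: with $x=(x_1,0)$, $y=(y_1,0)$, $\|x_1\|=\|y_1\|=1$, $\langle x_1,y_1\rangle=q$, one has $\langle \mathrm{diag}(X+Y,X-Y)x,y\rangle=\langle(X+Y)x_1,y_1\rangle$, and taking a supremum gives $w_q\ge w_q(X+Y)$; replacing $(x_1,0),(y_1,0)$ by $(0,x_1),(0,y_1)$ gives $w_q\ge w_q(X-Y)$, hence the claimed maximum.

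For part (b), set $\mathbf{T}=\begin{bmatrix} 0 & X \\ Y & 0 \end{bmatrix}$. For the lower bound I would test $\mathbf{T}$ against the ``balanced'' vectors $x=\tfrac{1}{\sqrt2}(x_1,x_1)$ and $y=\tfrac{1}{\sqrt2}(y_1,y_1)$ with $\|x_1\|=\|y_1\|=1$ and $\langle x_1,y_1\rangle=q$; these satisfy $\|x\|=\|y\|=1$ and $\langle x,y\rangle=q$, and a short computation gives $\langle\mathbf{T}x,y\rangle=\tfrac12\langle(X+Y)x_1,y_1\rangle$, so that $w_q(\mathbf{T})\ge\tfrac12\,w_q(X+Y)$ after taking the supremum over all admissible $x_1,y_1$. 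To produce the $w_q(X-Y)$ term I would invoke the phase relation \eqref{t5.1a} with $\theta=\pi$, giving $w_q(\mathbf{T})=w_q\big(\begin{bmatrix} 0 & X \\ -Y & 0 \end{bmatrix}\big)$, and then rerun the same computation with $Y$ replaced by $-Y$ to obtain $w_q(\mathbf{T})\ge\tfrac12\,w_q(X-Y)$; taking the maximum completes the lower bound.

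For the upper bound in (b), I would use the exact identity $\begin{bmatrix} 0 & X \\ Y & 0 \end{bmatrix}=\tfrac12\begin{bmatrix} 0 & X+Y \\ X+Y & 0 \end{bmatrix}+\tfrac12\begin{bmatrix} 0 & X-Y \\ -(X-Y) & 0 \end{bmatrix}$. Subadditivity of $w_q$ (immediate, since $\langle(S+T)u,v\rangle=\langle Su,v\rangle+\langle Tu,v\rangle$ for the same pair $u,v$), the bound $w_q(\cdot)\le\|\cdot\|$, and the elementary fact that $\big\|\begin{bmatrix} 0 & Z \\ \pm Z & 0 \end{bmatrix}\big\|=\|Z\|$ for every $Z\in\mathcal{B}(\mathcal{H})$ (the corresponding $T^{*}T$ equals $Z^{*}Z\oplus Z^{*}Z$) together yield $w_q(\mathbf{T})\le\tfrac12\|X+Y\|+\tfrac12\|X-Y\|$.

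Once the unitary $U$ and the phase invariance \eqref{t5.1a} are at hand, all steps are routine; the only place calling for a little care is manufacturing the $w_q(X-Y)$ term in the lower bound of (b), which—in contrast to part (a), where one merely swaps the two coordinates—requires flipping the sign of $Y$ via \eqref{t5.1a} before running the balanced‑vector computation a second time.
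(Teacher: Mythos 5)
Your proof is correct, and while part (a) follows essentially the paper's route, part (b) is argued differently in both directions. In (a) you use the same unitary block-diagonalization (the paper conjugates by $\tfrac{1}{\sqrt2}\begin{bmatrix} I & I\\ -I & I\end{bmatrix}$, you by $\tfrac{1}{\sqrt2}\begin{bmatrix} I & I\\ I & -I\end{bmatrix}$, which is immaterial), but where the paper simply cites the estimate $\max\{w_q(X\pm Y)\}\le w_q(\mathrm{diag}(X-Y,X+Y))\le\max\{\|X\pm Y\|\}$ from an earlier work, you prove it on the spot by embedding test vectors $(x_1,0),(y_1,0)$ and $(0,x_1),(0,y_1)$, which makes the argument self-contained (and you correctly read the paper's set-braces on the right-hand side as a maximum). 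In (b) the paper obtains the lower bound indirectly: it applies the triangle inequality for $w_q$ to $\begin{bmatrix}0 & X+Y\\ X+Y & 0\end{bmatrix}$, uses the swap relation \eqref{t5.1b} to merge the two terms into $2w_q\bigl(\begin{bmatrix}0&X\\ Y&0\end{bmatrix}\bigr)$, and then invokes part (a) with $X=0$; your balanced-vector computation $x=\tfrac1{\sqrt2}(x_1,x_1)$, $y=\tfrac1{\sqrt2}(y_1,y_1)$ reaches $w_q(\mathbf T)\ge\tfrac12 w_q(X+Y)$ in one step, with the sign flip via \eqref{t5.1a} handled the same way in both proofs. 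For the upper bound the paper first conjugates $\mathbf T$ by $\tfrac1{\sqrt2}\begin{bmatrix}I&-I\\ I&I\end{bmatrix}$ and then splits the conjugated matrix into diagonal plus antidiagonal parts, whereas you split $\mathbf T$ itself as $\tfrac12\begin{bmatrix}0&X+Y\\ X+Y&0\end{bmatrix}+\tfrac12\begin{bmatrix}0&X-Y\\ -(X-Y)&0\end{bmatrix}$ and use subadditivity of $w_q$ together with $\bigl\|\begin{bmatrix}0&Z\\ \pm Z&0\end{bmatrix}\bigr\|=\|Z\|$; the two computations give the identical bound $\tfrac12(\|X+Y\|+\|X-Y\|)$, but yours avoids the extra unitary. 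In short, your version trades the paper's reliance on the cited block-diagonal lemma and on the swap/triangle machinery for direct test-vector and splitting arguments, gaining elementarity at no cost in generality.
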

			\begin{proof}
				\begin{itemize}
					\item [(a)] From \cite[Remark 4(i)]{patra2024estimation}, we have
					$$
					\max\{w_q(X-Y),w_q(X+Y)\} \le w_q\left(\begin{bmatrix}
						{X-Y} & {0}\\
						{0} & {X+Y}
					\end{bmatrix}\right) \le \{ \|X-Y\|, \|X+Y\|\}.
					$$
					As $w_q(U^*TU)=w_q(T)$, taking $U=\frac{1}{\sqrt{2}}\begin{bmatrix}
						{I} & {I}\\
						{-I} & {I}
					\end{bmatrix},$	
					we have $\begin{bmatrix}
						{X} & {Y}\\
						{Y} & {X}
					\end{bmatrix} $ is unitarily similar to $\begin{bmatrix}
						{X-Y} & {0}\\
						{0} & {X+Y}
					\end{bmatrix}$. This implies,
					\[
					\max\{w_q(X-Y),w_q(X+Y)\} \le w_q\left(\begin{bmatrix}
						{X} & {Y}\\
						{Y} & {X}
					\end{bmatrix}\right) \le \{ \|X-Y\|, \|X+Y\|\}.
					\]
					\item [(b)]
					The triangle inequality of $q$-numerical radius provides
					\begin{equation*}
						w_q\left(\begin{bmatrix}
							{0} & {X+Y}\\
							{X+Y} & {0}
						\end{bmatrix}\right) \le w_q\left(\begin{bmatrix}
							{0} & {X}\\
							{Y} & {0}
						\end{bmatrix}\right)+w_q\left(\begin{bmatrix}
							{0} & {Y}\\
							{X} & {0}
						\end{bmatrix}\right).
					\end{equation*}
					From the relation $w_q \left(\begin{bmatrix}
						{0} & {X}\\
						{Y} & {0}
					\end{bmatrix}\right)=w_q \left(\begin{bmatrix}
						{0} & {Y}\\
						{X} & {0}\\
					\end{bmatrix}
					\right)$ (equality \ref{t5.1b}), it follows 
							\begin{equation*}
								w_q\left(\begin{bmatrix}
									{0} & {X+Y}\\
									{X+Y} & {0}
								\end{bmatrix}\right) \le 2w_q\left(\begin{bmatrix}
									{0} & {X}\\
									{Y} & {0}
								\end{bmatrix}\right).
							\end{equation*}
							Taking $X=0$ in part (a), we have $w_q(Y) \le w_q\left(\begin{bmatrix}
								{0} & {Y}\\
								{Y} & {0}
							\end{bmatrix}\right)$. Therefore,       
							\begin{equation*}
								w_q(X+Y) \le w_q\left(\begin{bmatrix}
									{0} & {X+Y}\\
									{X+Y} & {0}
								\end{bmatrix}\right) \le 2w_q\left(\begin{bmatrix}
									{0} & {X}\\
									{Y} & {0}
								\end{bmatrix}\right)
								=2w_q\left(\begin{bmatrix}
									{0} & {X}\\
									{e^{i\theta} Y} & {0}
								\end{bmatrix}\right)  .
							\end{equation*}
							Replacing $Y$ with $-Y$, and taking $\theta=\pi$, 
							we have 
							
							\begin{equation*}
								w_q(X-Y) \le w_q\left(\begin{bmatrix}
									{0} & {X-Y}\\
									{X-Y} & {0}
								\end{bmatrix}\right) \le 2w_q\left(\begin{bmatrix}
									{0} & {X}\\
									{Y} & {0}
								\end{bmatrix}\right).
							\end{equation*}
							Thus, 
							\begin{equation*}
								w_q\left(\begin{bmatrix}
									{0} & {X}\\
									{Y} & {0}
								\end{bmatrix}\right) \ge \frac{1}{2}\max\{w_q(X+Y), w_q(X-Y)\}.
							\end{equation*}
							Let $U=\frac{1}{\sqrt{2}}\begin{bmatrix}
								{I} & {-I}\\
								{I} & {I}
							\end{bmatrix}$. Then
							\begin{align*}
								w_q\left(\begin{bmatrix}
									{0} & {X}\\
									{Y} & {0}
								\end{bmatrix}\right) =&w_q\left(U^*\begin{bmatrix}
									{0} & {X}\\
									{Y} & {0}
								\end{bmatrix}U\right)	\\
								=& \frac{1}{2}w_q\left(\begin{bmatrix}
									{X+Y} & {X-Y}\\
									{-(X-Y)} & {-(X+Y)}
								\end{bmatrix}\right)\\
								\le& \frac{1}{2}w_q\left(\begin{bmatrix}
									{X+Y} & {0}\\
									{0} & {-(X+Y)}
								\end{bmatrix} +\begin{bmatrix}
									{0} & {X-Y}\\
									{-(X-Y)} & {0}
								\end{bmatrix}\right)\\
								\le& \frac{1}{2}\left(\norm{\begin{bmatrix}
										{X+Y} & {0}\\
										{0} & {-(X+Y)}
								\end{bmatrix}} +\norm{\begin{bmatrix}
										{0} & {X-Y}\\
										{-(X-Y)} & {0}
								\end{bmatrix}}\right)\\
								=& \frac{1}{2}\left(\|X+Y\|+\|X-Y\| \right).
							\end{align*}
							Hence the required result holds.
							
						\end{itemize}
					\end{proof}

					\begin{remark}
						If we take $X=0$ in part (a) of the aforementioned result, we have
						\begin{equation}\label{sharp}
							w_q(Y) \le w_q\left(\begin{bmatrix}
								{0} & {Y}\\
								{Y} & {0}
							\end{bmatrix}\right)\le \|Y\|.
						\end{equation}
						The bounds mentioned in relation \eqref{sharp}. For $Y=\begin{bmatrix}
							0 &1 \\
							1 & 0
						\end{bmatrix}$, we have $\|Y\|=w_q(Y)=w_q\left( \begin{bmatrix}
							0 &Y \\
							Y & 0
						\end{bmatrix}\right) =1.$ 
					\end{remark}
					\begin{corollary}\label{NEW}
						Let $X \in \mathcal{B(H)}$ and $q \in \mathcal{D'}$. Then
						\begin{itemize}
							\item [(a)]
							$
							|q| \max \{ \|\mathcal{R}(X)\|, \|\mathcal{I}(X)\|\} \le w_q\left(\begin{bmatrix}
								{0} & {X}\\
								{X^*} & {0}
							\end{bmatrix}\right) \le \|\mathcal{R}(X)\|+\|\mathcal{I}(X)\|,
							$
							\item [(b)] 
							$w_q(X)
							\le w_q \left(
							\begin{bmatrix}
								{0} & {\mathcal{R}(X)}\\
								{\mathcal{I}(X)} & {0}
							\end{bmatrix}\right)
							\le \|X\|,$
							\item [(c)] if $X$ is hermitian, then
							$|q| \|X\| \le w_q\left(\begin{bmatrix}
								{0} & {X}\\
								{X} & {0}
							\end{bmatrix}\right) \le \|X\|,$
							\item [(d)] if $X^2=0$, then
							$|q| \|\mathcal{R}(X)\| \le w_q\left(\begin{bmatrix}
								{0} & {X}\\
								{X^*} & {0}
							\end{bmatrix}\right) \le 2\|\mathcal{R}(X)\|.$
						\end{itemize}
					\end{corollary}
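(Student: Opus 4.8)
The strategy is to read each of (a)--(d) off Proposition \ref{l5.3} by specializing the pair $(X,Y)$ appearing there, together with four standard facts: $w_q(\lambda T)=|\lambda|\,w_q(T)$; $w_q(\cdot)$ depends only on $|q|$, so in particular $w_q(T^*)=w_q(T)$ (cf. \cite[Proposition~3.1]{gau2021numerical}); the norm identity $\left\|\begin{bmatrix}0&B\\ C&0\end{bmatrix}\right\|=\max\{\|B\|,\|C\|\}$; and the Cartesian identities $X+X^*=2\mathcal R(X)$, $X-X^*=2i\,\mathcal I(X)$. Throughout, Corollary \ref{co1.6} supplies the lower bounds $|q|\,\|\mathcal R(X)\|\le w_q(\mathcal R(X))$ and $|q|\,\|\mathcal I(X)\|\le w_q(\mathcal I(X))$, both $\mathcal R(X)$ and $\mathcal I(X)$ being self-adjoint.

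For (a), put $Y=X^*$ in Proposition \ref{l5.3}(b). Then $X+Y=2\mathcal R(X)$ and $X-Y=2i\,\mathcal I(X)$, so $w_q(X\pm Y)$ equals $2w_q(\mathcal R(X))$, respectively $2w_q(\mathcal I(X))$, while $\|X\pm Y\|$ equals $2\|\mathcal R(X)\|$, respectively $2\|\mathcal I(X)\|$. The factor $\frac12$ in Proposition \ref{l5.3}(b) thus cancels: the left-hand side becomes $\max\{w_q(\mathcal R(X)),w_q(\mathcal I(X))\}\ge|q|\max\{\|\mathcal R(X)\|,\|\mathcal I(X)\|\}$ by Corollary \ref{co1.6}, and the right-hand side becomes $\|\mathcal R(X)\|+\|\mathcal I(X)\|$.

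Parts (c) and (d) then follow from (a). Part (c) is its self-adjoint case: if $X=X^*$ then $\mathcal R(X)=X$, $\mathcal I(X)=0$, $\begin{bmatrix}0&X\\ X^*&0\end{bmatrix}=\begin{bmatrix}0&X\\ X&0\end{bmatrix}$, and (a) reads $|q|\,\|X\|\le w_q\!\left(\begin{bmatrix}0&X\\ X&0\end{bmatrix}\right)\le\|X\|$. For (d) apply (a) directly; the lower bound is immediate, and for the upper bound one uses $X^2=0$: expanding $X^2=\bigl(\mathcal R(X)^2-\mathcal I(X)^2\bigr)+i\bigl(\mathcal R(X)\mathcal I(X)+\mathcal I(X)\mathcal R(X)\bigr)$ and separating the self-adjoint ``real'' and ``imaginary'' parts forces $\mathcal R(X)^2=\mathcal I(X)^2$; since both are positive operators this gives $\|\mathcal R(X)\|=\|\mathcal I(X)\|$, so $\|\mathcal R(X)\|+\|\mathcal I(X)\|=2\|\mathcal R(X)\|$ and $|q|\max\{\|\mathcal R(X)\|,\|\mathcal I(X)\|\}=|q|\,\|\mathcal R(X)\|$.

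Part (b) is where care is needed. The upper bound is immediate: $w_q\!\left(\begin{bmatrix}0&\mathcal R(X)\\ \mathcal I(X)&0\end{bmatrix}\right)\le\left\|\begin{bmatrix}0&\mathcal R(X)\\ \mathcal I(X)&0\end{bmatrix}\right\|=\max\{\|\mathcal R(X)\|,\|\mathcal I(X)\|\}\le\|X\|$. For the lower bound I would apply Proposition \ref{l5.3}(a) to the pair $\bigl(\mathcal R(X),\,i\,\mathcal I(X)\bigr)$: since $\mathcal R(X)+i\,\mathcal I(X)=X$, $\mathcal R(X)-i\,\mathcal I(X)=X^*$ and $w_q(X^*)=w_q(X)$, that inequality gives $w_q(X)\le w_q\!\left(\begin{bmatrix}\mathcal R(X)&i\,\mathcal I(X)\\ i\,\mathcal I(X)&\mathcal R(X)\end{bmatrix}\right)$ (the matrix on the right being unitarily equivalent, via the unitary used in the proof of Proposition \ref{l5.3}(a), to $\operatorname{diag}(X^*,X)$), and one then transports this estimate to the stated off-diagonal matrix via the weak unitary invariance of $w_q$---first absorbing the scalar $i$ using \eqref{t5.1a}, then conjugating by a permutation-type unitary as in the proof of Proposition \ref{l5.3}(a). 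The main obstacle is exactly this passage between the two $2\times2$ block forms while keeping the constant equal to $1$: this is the one step that demands careful weak-unitary-invariance bookkeeping rather than a direct appeal to Proposition \ref{l5.3}(b), whose lower bound would only yield the estimate with a spurious factor $\frac12$.
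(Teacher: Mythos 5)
Your treatment of parts (a), (c) and (d) is correct and is essentially the paper's own argument: (a) is Proposition \ref{l5.3}(b) with $Y=X^*$ together with Corollary \ref{co1.6}, (c) is its hermitian specialization, and your derivation of $\|\mathcal{R}(X)\|=\|\mathcal{I}(X)\|$ from $X^2=0$ is exactly the fact the paper invokes parenthetically. The upper bound in (b) is also fine and agrees with what the paper extracts from the right-hand side of \eqref{ne3.32}.

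The genuine gap is the lower bound in (b), and the caveat you flag is not mere bookkeeping but an unbridgeable step. Proposition \ref{l5.3}(a) applied to the pair $(\mathcal{R}(X),\,i\mathcal{I}(X))$ controls $w_q$ of $\begin{bmatrix}\mathcal{R}(X)&i\mathcal{I}(X)\\ i\mathcal{I}(X)&\mathcal{R}(X)\end{bmatrix}$, which is unitarily equivalent to $X^*\oplus X$; the invariances \eqref{t5.1a} and \eqref{t5.1b} only adjust phases and swap blocks \emph{within} off-diagonal matrices, and no constant-$1$ passage from $X^*\oplus X$ to $\begin{bmatrix}0&\mathcal{R}(X)\\ \mathcal{I}(X)&0\end{bmatrix}$ can exist, because the stated inequality in (b) is in fact false. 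Take $X=I$, so $\mathcal{R}(X)=I$, $\mathcal{I}(X)=0$ and $w_q(X)=|q|$, while for $N=\begin{bmatrix}0&I\\0&0\end{bmatrix}$ the decomposition $y=\overline{q}x+\sqrt{1-|q|^2}\,z$ gives $w_q(N)\le |q|\,w(N)+\sqrt{1-|q|^2}\,\|N\|=\frac{|q|}{2}+\sqrt{1-|q|^2}$, which is strictly smaller than $|q|$ whenever $|q|>2/\sqrt{5}$; at $q=1\in\mathcal{D}'$ this is simply $w(N)=\frac12<1=w(X)$. What Proposition \ref{l5.3}(b) honestly yields after substituting $X\mapsto\mathcal{R}(X)$, $Y\mapsto\mathcal{I}(X)$ is only $\frac12\max\{w_q(X),w_q(X^*)\}=\frac12\,w_q(X)$ on the left of \eqref{ne3.32} (using $w_q(X^*)=w_q(X)$); the paper's display, which conflates a maximum with a sum, silently drops this factor, so the paper's own proof of (b) does not establish the stated bound either. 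Your instinct that a direct appeal to Proposition \ref{l5.3}(b) produces an unavoidable factor $\frac12$ was exactly right, and the correct resolution is to weaken part (b) to $\frac12 w_q(X)\le w_q\!\left(\begin{bmatrix}0&\mathcal{R}(X)\\ \mathcal{I}(X)&0\end{bmatrix}\right)\le\|X\|$, not to remove the factor by further unitary manipulations.
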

					\begin{proof}
						Taking $Y=X^*$ in Proposition \ref{l5.3}(b) and using Corollary \ref{co1.6}(a), we can easily obtain the result mentioned in part (a).  Replacing $Y$ with $iY$ in Proposition \ref{l5.3}(b), we have
						\begin{equation*}
							\frac{1}{2}\max\{w_q(X+iY)+w_q(X-iY)\}
							\le w_q \left(
							\begin{bmatrix}
								{0} & {X}\\
								{iY} & {0}
							\end{bmatrix}\right)
							\le \frac{1}{2}(\|X+iY\|+\|X-iY\|).
						\end{equation*}	
						For $\theta=\frac{\pi}{2}$, from the relation $w_q \left(
						\begin{bmatrix}
							{0} & {X}\\
							{Y} & {0}
						\end{bmatrix}\right)=w_q \left(
						\begin{bmatrix}
							{0} & {X}\\
							{e^{i \theta}Y} & {0}
						\end{bmatrix}\right)$(equality \eqref{t5.1a}), it follows
						\begin{equation}\label{ne3.32}
							\frac{1}{2}\max\{w_q(X+iY)+w_q(X-iY)\}
							\le w_q \left(
							\begin{bmatrix}
								{0} & {X}\\
								{Y} & {0}
							\end{bmatrix}\right)
							\le \frac{1}{2}(\|X+iY\|+\|X-iY\|).
						\end{equation}	
						The inequalities in part (b) follow by replacing $X$ with $\mathcal{R}(X)$ and $Y$ with $\mathcal{I}(X)$ in (\ref{ne3.32}). Moreover, the relations in part (c) and part (d) follow from part (a) when $X$ is hermitian and $X^2=0$($\|\mathcal{R}(X)\|=\|\mathcal{I}(X)\|$) respectively.
					\end{proof}

					Let $T$ be a bounded linear operator, acting on a Hilbert space $\mathcal{H}$, there exists a unique complex number $c\in\overline{W(T)}$ such that \cite{stampfli1970norm}
					\begin{equation}\label{em}
						m(T)=\inf_{\lambda \in \mathbb{C}}\|T-\lambda I\|=\|T-cI\|.
					\end{equation}
					Prasanna \cite{prasanna1981norm} termed $m(T)$ as the transcendental radius and defined it as 
					\begin{equation}\label{em2}
						m^2(T)=\sup_{\|x\|=1}\left(\|Tx\|^2-|\langle Tx,x \rangle|^2\right).
					\end{equation}
					Let us consider an orthonormal set $\mathcal{O}$ containing $x$ and $z$. Bessel's inequality implies 
					\begin{eqnarray*}
						\sum_{y' \in \mathcal{O}}|\langle Tx,y' \rangle |^2 \le \|Tx\|^2.\\
						|\langle Tx,z \rangle |^2 \le \sum_{y' \in \mathcal{O} \setminus \{x\}}|\langle Tx,y' \rangle |^2 \le \|Tx\|^2-|\langle Tx,x \rangle|^2 . 
					\end{eqnarray*}
					Therefore, relations \eqref{em} and \eqref{em2} imply,
					\begin{equation}\label{bessel}
						| \langle Tx,z \rangle | \le (\|Tx\|^2-|\langle Tx,x \rangle|^2)^{\frac{1}{2}}
						\le m(T) 
						\le \|T-\lambda I \|, ~\lambda \in \mathbb{C}.   
					\end{equation}
					Using $m(T)$, the following result gives us an upper bound of $w_q \left(\begin{bmatrix}
						{0} & {X}\\
						{Y} & {0}
					\end{bmatrix}\right)$ using the concept of non-negative functions $f$ and $g$. The following lemma is used in the next theorem.
					\begin{lemma}  \cite[Theorem 1]{kittaneh1988notes}\label{l5.8}
						Let $T \in \mathcal{B(H)}$, $f$ and $g$ be non-negative functions on $\mathclose{[}0, \infty\mathopen{)}$ which are continuous and satisfying the relation $f(t)g(t)=t$ for all $t \in\mathclose{[}0, \infty\mathopen{)}$. Then
						$
						|\langle Tx,y \rangle|\le \|f(|T|)x\| \|g(|T^*|)y\|
						$
						for all $x,y \in \mathcal{H}$.
					\end{lemma}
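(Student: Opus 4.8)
The plan is to reduce the inequality to Cauchy--Schwarz after a suitable factorisation of $T$ coming from its polar decomposition. Write $T = U|T|$ with $|T| = (T^{*}T)^{1/2}$ and $U$ the partial isometry with initial space $\overline{\operatorname{ran}}\,|T|$; in particular $\|U\| \le 1$. The key preliminary fact I would establish is the intertwining identity
\[
U\,\phi(|T|) \;=\; \phi(|T^{*}|)\,U
\]
valid for every continuous $\phi$ on a compact interval containing $\sigma(|T|) \cup \sigma(|T^{*}|)$. This follows from $TT^{*} = U|T|^{2}U^{*}$, which gives $(TT^{*})\,U = U|T|^{2}\,(U^{*}U) = U|T|^{2}$ because $U^{*}U$ is the projection onto $\overline{\operatorname{ran}}\,|T|$ and hence fixes $|T|^{2}$; iterating yields $p(TT^{*})\,U = U\,p(|T|^{2})$ for all polynomials $p$, and uniformly approximating $\sqrt{\cdot}$ on $[0,\|T\|^{2}]$ gives $|T^{*}|\,U = U\,|T|$. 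A second polynomial approximation, now of $\phi$, upgrades this to the displayed identity. Since $\sigma(|T|)$ and $\sigma(|T^{*}|)$ are compact, only the restrictions of $f$ and $g$ to these sets matter, so continuity on all of $[0,\infty)$ causes no trouble.

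Next I would use the hypothesis $f(t)g(t) = t$. Because multiplication of functions is commutative, $g(|T|)\,f(|T|) = (gf)(|T|) = |T|$ by the functional calculus, so
\[
T \;=\; U|T| \;=\; U\,g(|T|)\,f(|T|) \;=\; g(|T^{*}|)\,U\,f(|T|),
\]
the last step being the intertwining identity with $\phi = g$. Here $f(|T|)$ and $g(|T^{*}|)$ are positive, in particular self-adjoint.

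Finally, for $x,y \in \mathcal{H}$,
\[
\langle Tx,y\rangle \;=\; \big\langle g(|T^{*}|)\,U\,f(|T|)x,\; y\big\rangle \;=\; \big\langle U\,f(|T|)x,\; g(|T^{*}|)\,y\big\rangle,
\]
and Cauchy--Schwarz together with $\|U\|\le 1$ gives
\[
|\langle Tx,y\rangle| \;\le\; \|U\,f(|T|)x\|\;\|g(|T^{*}|)\,y\| \;\le\; \|f(|T|)x\|\;\|g(|T^{*}|)\,y\|,
\]
as claimed.

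The only genuinely delicate point is the intertwining relation for the merely-partial isometry $U$: one must check that $U^{*}U$ acts as the identity on the ranges of the relevant functions of $|T|$ and justify the passage from polynomials to continuous functions. The Cauchy--Schwarz step itself, and the bookkeeping with $f(0)g(0)=0$ on $\ker T = \ker|T|$ (where $U$ vanishes), are routine. I do not anticipate any essential difficulty beyond these standard operator-theoretic manipulations.
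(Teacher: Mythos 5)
Your argument is correct. The paper itself does not prove this lemma---it is quoted from Kittaneh's 1988 paper---and your proof (polar decomposition $T=U|T|$, the intertwining relation $U\varphi(|T|)=\varphi(|T^*|)U$ obtained by polynomial approximation from $|T^*|U=U|T|$, the factorisation $T=g(|T^*|)\,U\,f(|T|)$ via $f(t)g(t)=t$, and Cauchy--Schwarz with $\|U\|\le 1$) is essentially Kittaneh's original argument, with the delicate points (the action of $U^*U$ on $\overline{\operatorname{ran}}\,|T|$ and the passage from polynomials to continuous functions on the compact spectra) handled correctly.
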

					\begin{theorem}\label{non-negative}
						Let $f$ and $g$ satisfies the conditions of Lemma \ref{l5.8}, $q \in \mathcal{D'}$ and $\lambda$ is any arbitrary complex number. Then, we have
						\begin{itemize}
							\item [(a)] \begin{align*}
								w_q \left(\begin{bmatrix}
									{0} & {X}\\
									{Y} & {0}
								\end{bmatrix}\right)
								\le & \frac{1}{2} \max \{\|f^2(|Y|)+|q|^2g^2(|X^*|)\|,\|f^2(|X|)+|q|^2g^2(|Y^*|) \|\}\\
								+&\frac{(1-|q|^2)}{2} \max\{ \|g(|X^*|)\|^2, \|g(|Y^*|)\|^2\}\\
								+&|q|\sqrt{1-|q|^2} \max \{ \|\{g^2|X^*|-\lambda I\|,\|g^2|Y^*|-\lambda I\|\}.
							\end{align*}
							\item [(b)]
							\begin{align*}
								w_q^2 \left(\begin{bmatrix}
									{0} & {X}\\
									{Y} & {0}
								\end{bmatrix}\right)
								\le & \frac{1}{2} \max \{\|f^4(|Y|)+|q|^2g^4(|X^*|)\|,\|f^4(|X|)+|q|^2g^4(|Y^*|) \|\}\\
								+&\frac{(1-|q|^2)}{2} \max\{ \|g(|X^*|)\|^4, \|g(|Y^*|)\|^4\}\\
								+&{|q|\sqrt{1-|q|^2}} \max \{\|g^4|X^*|-\lambda I\|,\|g^4|Y^*|-\lambda I\|\}.
							\end{align*}
						\end{itemize}
					\end{theorem}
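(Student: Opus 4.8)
The plan is to estimate $\bigl|\langle \mathbf{T}x,y\rangle\bigr|$ for unit vectors $x=(x_1,x_2)$, $y=(y_1,y_2)\in\mathcal H\oplus\mathcal H$ with $\langle x,y\rangle=q$, where $\mathbf{T}=\begin{bmatrix}0&X\\Y&0\end{bmatrix}$, and then take the supremum. Writing out the off-diagonal action gives $\langle \mathbf{T}x,y\rangle=\langle Xx_2,y_1\rangle+\langle Yx_1,y_2\rangle$. For each summand I would apply Lemma \ref{l5.8}: $|\langle Xx_2,y_1\rangle|\le\|f(|X|)x_2\|\,\|g(|X^*|)y_1\|$ and $|\langle Yx_1,y_2\rangle|\le\|f(|Y|)x_1\|\,\|g(|Y^*|)y_2\|$. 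Then the elementary inequality $ab\le\frac12(a^2+b^2)$ — or, to carry the parameter, $ab\le\frac12(a^2 t + b^2/t)$ with a suitably chosen weight — is used to split each product; the natural choice here is to weight so that the $f$-terms combine over the $x$-coordinates and the $g$-terms over the $y$-coordinates, using $\|x_1\|^2+\|x_2\|^2=1$.

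Next, the constraint $\langle x,y\rangle=q$ enters through the construction recalled before Theorem \ref{thmrelation}: write $y=\overline q\,x+\sqrt{1-|q|^2}\,z$ with $\|z\|=1$, $\langle x,z\rangle=0$, so $y_1=\overline q\,x_1+\sqrt{1-|q|^2}\,z_1$ and $y_2=\overline q\,x_2+\sqrt{1-|q|^2}\,z_2$. Substituting this into the $g(|X^*|)y_1$ and $g(|Y^*|)y_2$ factors and expanding $\|g(|X^*|)y_1\|^2$ produces three pieces: a $|q|^2\|g(|X^*|)x_1\|^2$ term (which pairs with the $f^2(|X|)\|x_2\|^2$ term to give the operator-norm expression $\|f^2(|Y|)+|q|^2g^2(|X^*|)\|$ after one regrouping and using $\|x_1\|^2+\|x_2\|^2=1$ together with the companion term from the $Y$-summand), a $(1-|q|^2)\|g(|X^*|)z_1\|^2$ term bounded by $(1-|q|^2)\|g(|X^*|)\|^2$, and a cross term $2|q|\sqrt{1-|q|^2}\,\operatorname{Re}\langle g^2(|X^*|)x_1,z_1\rangle$. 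For this last cross term I would invoke \eqref{bessel}: since $x_1,z_1$ are (proportional to) orthonormal vectors, $|\langle (g^2(|X^*|)-\lambda I)x_1,z_1\rangle|\le\|g^2(|X^*|)-\lambda I\|$ for arbitrary $\lambda$, and the $\lambda I$ part contributes nothing because $\langle x_1,z_1\rangle=0$ (modulo normalization of $x_1,z_1$, which only shrinks the bound). Taking the maximum over the two symmetric contributions from $X$ and $Y$, and then the supremum over all admissible $x,y$, yields part (a).

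Part (b) is obtained by the same scheme run with $f^2,g^2$ in place of $f,g$: one instead bounds $|\langle \mathbf{T}x,y\rangle|^2$ directly, or applies Lemma \ref{l5.8} with the pair $(f^2,g^2)$ (which still satisfies $f^2(t)g^2(t)=t^2$, not $t$ — so more care is needed) and uses a Cauchy–Schwarz/convexity step to pass from $|\langle\mathbf{T}x,y\rangle|^2$ to a sum of squared norms, replicating the three-term decomposition with fourth powers. The main obstacle I anticipate is the bookkeeping in the regrouping step: correctly matching the $f$-coefficient over $x_2$ with the $|q|^2 g$-coefficient over $x_1$ so that $\|x_1\|^2+\|x_2\|^2=1$ collapses the mixed quadratic form into the single operator norm $\|f^2(|Y|)+|q|^2 g^2(|X^*|)\|$ — this requires choosing the weight in the $ab\le\frac12(ta^2+b^2/t)$ step consistently across both summands, and handling the asymmetry between $X$ and $Y$ via the outer $\max$. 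Everything else (Bessel's inequality for the cross term, homogeneity, the triangle inequality) is routine once that algebraic normalization is set up correctly.
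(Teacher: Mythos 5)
Your strategy coincides with the paper's: apply Lemma \ref{l5.8}, split the resulting product by $ab\le\tfrac12(a^2+b^2)$, substitute $y=\overline{q}\,x+\sqrt{1-|q|^2}\,z$, control the cross term through the transcendental-radius estimate \eqref{bessel}, and collapse the quadratic form using $\|x_1\|^2+\|x_2\|^2=1$; the paper simply does this at the block level, applying the lemma once to $\mathbf{T}=\begin{bmatrix}0&X\\Y&0\end{bmatrix}$ with $|\mathbf{T}|=\operatorname{diag}(|Y|,|X|)$ and $|\mathbf{T}^*|=\operatorname{diag}(|X^*|,|Y^*|)$, which produces exactly your componentwise regrouping. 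However, your handling of the cross term has a genuine flaw: you discard the $\lambda I$ part of $\langle(g^2(|X^*|)-\lambda I)x_1,z_1\rangle$ on the grounds that $\langle x_1,z_1\rangle=0$, but orthogonality holds only for the full vectors, $\langle x_1,z_1\rangle+\langle x_2,z_2\rangle=\langle x,z\rangle=0$; the individual components need be neither orthogonal nor unit vectors. Treated per component, your argument leaves an extra $|\lambda|\,|\langle x_1,z_1\rangle|$ term, so the arbitrary-$\lambda$ bound of the theorem does not follow as written. The repair is to keep the two cross terms together, i.e.\ to bound $\bigl|\bigl\langle \operatorname{diag}\bigl(g^2(|X^*|),g^2(|Y^*|)\bigr)x,z\bigr\rangle\bigr| =\bigl|\bigl\langle \bigl(\operatorname{diag}(\cdot)-\lambda I\bigr)x,z\bigr\rangle\bigr| \le\max\{\|g^2(|X^*|)-\lambda I\|,\|g^2(|Y^*|)-\lambda I\|\}$ using the genuinely orthonormal pair $x,z$ in $\mathcal{H}\oplus\mathcal{H}$, which is precisely how the paper invokes \eqref{bessel} (equivalently, insert $\lambda\langle x,z\rangle=0$ before splitting and finish with Cauchy--Schwarz on $\|x_1\|\|z_1\|+\|x_2\|\|z_2\|\le 1$).

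Your sketch of part (b) is also incomplete. Running the scheme with the pair $(f^2,g^2)$ is inadmissible, as you yourself note, and squaring your componentwise estimate gives $(A_X+A_Y)^2$, which does not split into the required form without an unwanted factor of $2$. The paper's route is to square the single block-level inequality of Lemma \ref{l5.8}: for unit $x,y$, $|\langle\mathbf{T}x,y\rangle|^2\le\langle f^2(|\mathbf{T}|)x,x\rangle\,\langle g^2(|\mathbf{T}^*|)y,y\rangle \le\tfrac12\bigl(\langle f^2(|\mathbf{T}|)x,x\rangle^2+\langle g^2(|\mathbf{T}^*|)y,y\rangle^2\bigr) \le\tfrac12\bigl(\langle f^4(|\mathbf{T}|)x,x\rangle+\langle g^4(|\mathbf{T}^*|)y,y\rangle\bigr)$, the last step being Cauchy--Schwarz (power inequality) for positive operators and unit vectors, after which the part (a) computation is repeated with fourth powers. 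This is the precise content your ``Cauchy--Schwarz/convexity step'' needs, and it again requires working with the block operator rather than with the two summands separately.
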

					\begin{proof}
						\begin{itemize}
							\item[(a)] 
							Let
							Let $x=\begin{bmatrix}
								{x_1}\\
								{x_2} 
							\end{bmatrix} \in \mathcal{H},$
							and $y=
							\begin{bmatrix}
								{y_1}\\
								{y_2} 
							\end{bmatrix} \in \mathcal{H}$ with $\|x\|=\|y\|=1$ and $\langle x,y \rangle=q$.
							From Lemma \ref{l5.8}, we have
							\begin{align*}
								\left| \left \langle \begin{bmatrix}
									{0} & {X}\\
									{Y} & {0}
								\end{bmatrix}x,y \right \rangle \right| \le& \norm{f\left(  \left|\begin{bmatrix}
										{0} & {X}\\
										{Y} & {0}
									\end{bmatrix} \right| \right)x}	\norm{g
									\left(  \left|\begin{bmatrix}
										{0} & {Y^*}\\
										{X^*} & {0}
									\end{bmatrix} \right| \right)y}	\\
								\le & \left \langle f^2
								\left|\begin{bmatrix}
									{0} & {X}\\
									{Y} & {0}
								\end{bmatrix} \right| x,x \right \rangle^\frac{1}{2}\left \langle g^2 \left|\begin{bmatrix}
									{0} & {Y^*}\\
									{X^*} & {0}
								\end{bmatrix} \right| y,y \right \rangle^\frac{1}{2}\\
								\le & \left \langle f^2\begin{bmatrix}
									{|Y|} & {0}\\
									{0} & {|X|}
								\end{bmatrix} x,x \right \rangle^\frac{1}{2}\left \langle g^2\begin{bmatrix}
									{|X^*|} & {0}\\
									{0} & {|Y^*|}
								\end{bmatrix} y,y \right \rangle^\frac{1}{2}\\
								\le & \frac{1}{2}\left(\left \langle f^2\begin{bmatrix}
									{|Y|} & {0}\\
									{0} & {|X|}
								\end{bmatrix} x,x \right \rangle+\left \langle g^2\begin{bmatrix}
									{|X^*|} & {0}\\
									{0} & {|Y^*|}
								\end{bmatrix} y,y \right \rangle\right).
							\end{align*} 
							
							Putting $y=\overline{q}x+\sqrt{1-|q|^2}z$, where $\|z\|=1$ and $\langle x,z \rangle =0$. We obtain
							\begin{align*}
								&\left| \left \langle \begin{bmatrix}
									{0} & {X}\\
									{Y} & {0}
								\end{bmatrix}x,y \right \rangle \right|\\
								\le& \frac{1}{2}\left(\left \langle f^2\begin{bmatrix}
									{|Y|} & {0}\\
									{0} & {|X|}
								\end{bmatrix} x,x \right \rangle+\left \langle g^2\begin{bmatrix}
									{|X^*|} & {0}\\
									{0} & {|Y^*|}
								\end{bmatrix}\overline{q}x+\sqrt{1-|q|^2}z,\overline{q}x+\sqrt{1-|q|^2}z \right \rangle\right)\\
								\le& \frac{1}{2}\left \langle f^2\begin{bmatrix}
									{|Y|} & {0}\\
									{0} & {|X|}
								\end{bmatrix} x,x \right \rangle + \frac{|q|^2}{2}\left \langle g^2\begin{bmatrix}
									{|X^*|} & {0}\\
									{0} & {|Y^*|}
								\end{bmatrix}x,x \right \rangle
								+ \frac{1-|q|^2}{2}\left \langle g^2\begin{bmatrix}
									{|X^*|} & {0}\\
									{0} & {|Y^*|}
								\end{bmatrix}z,z \right \rangle\\
								+&\frac{1}{2}\left( \overline{q}\sqrt{1-|q|^2}\left \langle g^2\begin{bmatrix}
									{|X^*|} & {0}\\
									{0} & {|Y^*|}
								\end{bmatrix}x,z \right \rangle + q\sqrt{1-|q|^2}\left \langle g^2\begin{bmatrix}
									{|X^*|} & {0}\\
									{0} & {|Y^*|}
								\end{bmatrix}z,x \right \rangle\right)\\
									\le & \frac{1}{2}\left \langle f^2\begin{bmatrix}
										{|Y|} & {0}\\
										{0} & {|X|}
									\end{bmatrix} x,x \right \rangle + \frac{|q|^2}{2}\left \langle g^2\begin{bmatrix}
										{|X^*|} & {0}\\
										{0} & {|Y^*|}
									\end{bmatrix}x,x \right \rangle\\
									+& \frac{1-|q|^2}{2}\left \langle g^2\begin{bmatrix}
										{|X^*|} & {0}\\
										{0} & {|Y^*|}
									\end{bmatrix}z,z \right \rangle
									+ \mathcal{R}\left(\overline{q}\sqrt{1-|q|^2} \left \langle g^2\begin{bmatrix}
										{|X^*|} & {0}\\
										{0} & {|Y^*|}
									\end{bmatrix}x,z \right \rangle \right)\\
									\le & \frac{1}{2}\left \langle f^2\begin{bmatrix}
										{|Y|} & {0}\\
										{0} & {|X|}
									\end{bmatrix} x,x \right \rangle + \frac{|q|^2}{2}\left \langle g^2\begin{bmatrix}
										{|X^*|} & {0}\\
										{0} & {|Y^*|}
									\end{bmatrix}x,x \right \rangle\\
									+& \frac{1-|q|^2}{2}\left \langle g^2\begin{bmatrix}
										{|X^*|} & {0}\\
										{0} & {|Y^*|}
									\end{bmatrix}z,z \right \rangle
									+{|q|\sqrt{1-|q|^2}}\left|\left \langle g^2\begin{bmatrix}
										{|X^*|} & {0}\\
										{0} & {|Y^*|}
									\end{bmatrix}x,z \right \rangle \right|.
								\end{align*} 
								Using relation \eqref{bessel}, we have
								\begin{align*}
									\left| \left \langle \begin{bmatrix}
										{0} & {X}\\
										{Y} & {0}
									\end{bmatrix}x,y \right \rangle \right|
									\le & \frac{1}{2}\left \langle f^2\begin{bmatrix}
										{|Y|} & {0}\\
										{0} & {|X|}
									\end{bmatrix} x,x \right \rangle + \frac{|q|^2}{2}\left \langle g^2\begin{bmatrix}
										{|X^*|} & {0}\\
										{0} & {|Y^*|}
									\end{bmatrix}x,x \right \rangle\\
									+& \frac{1-|q|^2}{2}\left \langle g^2\begin{bmatrix}
										{|X^*|} & {0}\\
										{0} & {|Y^*|}
									\end{bmatrix}z,z \right \rangle\\
									+&{|q|\sqrt{1-|q|^2}} \norm{g^2\begin{bmatrix}
											{|X^*|} & {0}\\
											{0} & {|Y^*|}
										\end{bmatrix} -\lambda \begin{bmatrix}
											{I} & {0}\\
											{0} & {I}
									\end{bmatrix}}, ~ \text{where}~ \lambda \in \mathbb{C}\\
									\le & \frac{1}{2}\left \langle \begin{bmatrix}
										{f^2(|Y|)} & {0}\\
										{0} & {f^2(|X|)}
									\end{bmatrix} x,x \right \rangle + \frac{|q|^2}{2}\left \langle \begin{bmatrix}
										{g^2(|X^*|)} & {0}\\
										{0} & {g^2(|Y^*|)}
									\end{bmatrix}x,x \right \rangle\\
									+& \frac{1-|q|^2}{2}\left \langle \begin{bmatrix}
										{g^2(|X^*|)} & {0}\\
										{0} & {g^2(|Y^*|)}
									\end{bmatrix}z,z \right \rangle\\
									+&{|q|\sqrt{1-|q|^2}} \norm{\begin{bmatrix}
											{g^2(|X^*|)} & {0}\\
											{0} & {g^2(|Y^*|)}
										\end{bmatrix} -\lambda \begin{bmatrix}
											{I} & {0}\\
											{0} & {I}
									\end{bmatrix}}\\
									\le & \frac{1}{2}\left \langle \begin{bmatrix}
										{f^2(|Y|)+|q|^2g^2(|X^*|)} & {0}\\
										{0} & {f^2(|X|)+|q|^2g^2(|Y^*|)}
									\end{bmatrix} x,x \right \rangle \\
									+& \frac{1-|q|^2}{2}\left \langle \begin{bmatrix}
										{g^2(|X^*|)} & {0}\\
										{0} & {g^2(|Y^*|)}
									\end{bmatrix}z,z \right \rangle\\
									+&{|q|\sqrt{1-|q|^2}} \norm{\begin{bmatrix}
											{g^2(|X^*|)} & {0}\\
											{0} & {g^2(|Y^*|)}
										\end{bmatrix} -\lambda \begin{bmatrix}
											{I} & {0}\\
											{0} & {I}
									\end{bmatrix}}
								\end{align*}
								Therefore,
								\begin{align*}
									\left| \left \langle \begin{bmatrix}
										{0} & {X}\\
										{Y} & {0}
									\end{bmatrix}x,y \right \rangle \right| 
									\le & \frac{1}{2}\left(\norm{ \begin{bmatrix}
											f^2(|Y|)+|q|^2g^2(|X^*|) & {0}\\
											{0} & {f^2(|X|)+|q|^2g^2(|Y^*|)}
									\end{bmatrix} } \right) \\
									+& \frac{1-|q|^2}{2}\norm{ \begin{bmatrix}
											g^2(|X^*|) & {0}\\
											{0} & {g^2(|Y^*|)}
									\end{bmatrix}}\\
									+&{|q|\sqrt{1-|q|^2}} \norm{\begin{bmatrix}
											{g^2|X^*|} & {0}\\
											{0} & {g^2|Y^*|}
										\end{bmatrix} -\lambda\begin{bmatrix}
											{I} & {0}\\
											{0} & {I}
									\end{bmatrix}}  \\ 
									= & \frac{1}{2} \max \{\|f^2(|Y|)+|q|^2g^2(|X^*|)\|,\|f^2(|X|)+|q|^2g^2(|Y^*|) \|\}\\
									+&\frac{1-|q|^2}{2} \max\{ \|g^2(|X^*|)\|, \|g^2(|Y^*|)\|\}\\
									+&{|q|\sqrt{1-|q|^2}} \max \{\|g^2|X^*|-\lambda I\|,\|g^2|Y^*|-\lambda I\|\}.
								\end{align*}
								Taking supremum for all $x,y \in \mathcal{H}$ with $\|x\|=\|y\|=1$ and $\langle x,y \rangle=q$, we obtain 
								\begin{align*}
									w_q \left(\begin{bmatrix}
										{0} & {X}\\
										{Y} & {0}
									\end{bmatrix}\right)
									\le & \frac{1}{2} \max \{\|f^2(|Y|)+|q|^2g^2(|X^*|)\|,\|f^2(|X|)+|q|^2g^2(|Y^*|) \|\}\\
									+&\frac{1-|q|^2}{2} \max\{ \|g^2(|X^*|)\|, \|g^2(|Y^*|)\|\}\\
									+&{|q|\sqrt{1-|q|^2}}\max \| \{g^2|X^*|-\lambda I\|,\|g^2|Y^*|-\lambda I\|\}.
								\end{align*}
								\item [(b)]
								From Lemma \ref{l5.8}, it follows 
								\begin{align*}
									\left| \left \langle \begin{bmatrix}
										{0} & {X}\\
										{Y} & {0}
									\end{bmatrix}x,y \right \rangle \right|^2 
									\le& \norm{f\left(  \left|\begin{bmatrix}
											{0} & {X}\\
											{Y} & {0}
										\end{bmatrix} \right| \right)x}^2	\norm{g
										\left(  \left|\begin{bmatrix}
											{0} & {Y^*}\\
											{X^*} & {0}
										\end{bmatrix} \right| \right)y}^2	\\
									\le & \left \langle f^2\begin{bmatrix}
										{|Y|} & {0}\\
										{0} & {|X|}
									\end{bmatrix} x,x \right \rangle\left \langle g^2\begin{bmatrix}
										{|X^*|} & {0}\\
										{0} & {|Y^*|}
									\end{bmatrix} y,y \right \rangle\\
									\le & \frac{1}{2}\left(\left \langle f^2\begin{bmatrix}
										{|Y|} & {0}\\
										{0} & {|X|}
									\end{bmatrix} x,x \right \rangle^2+\left \langle g^2\begin{bmatrix}
										{|X^*|} & {0}\\
										{0} & {|Y^*|}
									\end{bmatrix} y,y \right \rangle^2\right)\\
									\le & \frac{1}{2}\left(\left \langle f^4\begin{bmatrix}
										{|Y|} & {0}\\
										{0} & {|X|}
									\end{bmatrix} x,x \right \rangle+\left \langle g^4\begin{bmatrix}
										{|X^*|} & {0}\\
										{0} & {|Y^*|}
									\end{bmatrix} y,y \right \rangle\right)
								\end{align*}
								A similar calculation as part $(a)$ follows the required result.
								\end{itemize}
							\end{proof}
							\begin{remark}
								Several upper bounds of $w_q \left(\begin{bmatrix}
									{0} & {X}\\
									{Y} & {0}
								\end{bmatrix}\right)$ follow by choosing particular $f$ and $g$.
								If $f(t)=t^{\gamma}$ and $g(t)=t^{1-\gamma}$, $0 \le \gamma \le 1$. Then Theorem \ref{non-negative}(a) gives us, 
								\begin{align*}
									w_q \left(\begin{bmatrix}
										{0} & {X}\\
										{Y} & {0}
									\end{bmatrix}\right)
									\le & \frac{1}{2} \max \{\||Y|^{2\gamma}+|q|^2|X^*|^{2(1-\gamma)}\|,\||X|^{2\gamma}+|q|^2|Y^*|^{2(1-\gamma)} \|\}\\
									+&\frac{1-|q|^2}{2} \max\{ \||X^*|^{2(1-\gamma)}\|, \||Y^*|^{2(1-\gamma)}\|\}\\
									+&{|q|\sqrt{1-|q|^2}} \max \{\||X^*|^{2(1-\gamma)}-\lambda I\|,\||Y^*|^{2(1-\gamma)}-\lambda I\|\}.
								\end{align*}
								If we take $\gamma=\frac{1}{2}$ in the aforementioned inequality, then 
								\begin{align*}
									w_q \left(\begin{bmatrix}
										{0} & {X}\\
										{Y} & {0}
									\end{bmatrix}\right)
									\le & \frac{1}{2} \max \{\||Y|+|q|^2|X^*|\|,\||X|+|q|^2|Y^*| \|\}\\
									+&\frac{1-|q|^2}{2} \max\{ \||X^*|\|, \||Y^*|\|\}\\
									+&{|q|\sqrt{1-|q|^2}} \max \{\||X^*|-\lambda I\|,\||Y^*|-\lambda I\|\}.
								\end{align*}
								When $q=1$, we have
								\begin{equation*}
									w \left(\begin{bmatrix}
										{0} & {X}\\
										{Y} & {0}
									\end{bmatrix}\right)
									\le  \frac{1}{2} \max \{\|f^2(|Y|)+|q|^2g^2(|X^*|)\|,\|f^2(|X|)+|q|^2g^2(|Y^*|) \|\}
								\end{equation*}
								which is mentioned in Theorem 1
								If we take $f(t)=\frac{t}{1+t}$ and $g(t)=1+t$ in Theorem \ref{non-negative}(a), we have
								\begin{align*}
									w_q \left(\begin{bmatrix}
										{0} & {X}\\
										{Y} & {0}
									\end{bmatrix}\right)
									\le & \frac{1}{2} \max \left\lbrace \norm{\left( \frac{|Y|}{I+|Y|}\right)^2+|q|^2\left(I+|X^*|\right)^2}, \norm{\left( \frac{|X|}{I+|X|}\right)^2+|q|^2\left(I+|Y^*|\right)^2} \right\rbrace\\
									+&{|q|\sqrt{1-|q|^2}}\max \| \{(I+|X^*|)^2-\lambda I\|,\|(I+|Y^*|)^2-\lambda I\|\}\\
									+&\frac{(1-|q|^2)}{2} \max\{ \|(I+|X^*|)^2 \|, \|(I+|Y^*|)^2\|\}.
								\end{align*}
							\end{remark}
							The following result provides a lower bound of the $q$-numerical radius of $\begin{bmatrix}
								{0} & {X}\\
								{Y} & {0}
							\end{bmatrix}$.
							
							\begin{theorem}\label{thmnew}
								Let $X,Y \in \mathcal{B(H)}$ and $q \in \mathcal{D'}$, we have
								\begin{equation*}
									w_q \left(\begin{bmatrix}
										{0} & {X}\\
										{Y} & {0}
									\end{bmatrix}\right) 
									\ge \frac{|q|}{2}\max\{\|X\|, \|Y\| \}+ \frac{q}{4}|\|X+Y^*\|-\|X-Y^*\||.
								\end{equation*}
							\end{theorem}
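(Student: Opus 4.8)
The plan is to first prove the auxiliary inequality
\[
w_q(\mathbf{T}) \ \ge\ \frac{|q|}{2}\,\max\bigl\{\|X+Y^*\|,\ \|X-Y^*\|\bigr\},
\]
and then to read off the stated estimate by elementary norm arithmetic.

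The key step for the auxiliary inequality is the observation that for \emph{every} operator $S$ on the Hilbert space $\mathcal{H}\oplus\mathcal{H}$ one has $w_q(S)\ge |q|\,\|\mathcal{R}(S)\|$ and $w_q(S)\ge |q|\,\|\mathcal{I}(S)\|$: since $\mathcal{R}(S)$ and $\mathcal{I}(S)$ are self-adjoint, Corollary \ref{co1.6} gives $|q|\,\|\mathcal{R}(S)\|\le w_q(\mathcal{R}(S))$ and $|q|\,\|\mathcal{I}(S)\|\le w_q(\mathcal{I}(S))$, while relation \eqref{ri} gives $w_q(\mathcal{R}(S))\le w_q(S)$ and $w_q(\mathcal{I}(S))\le w_q(S)$. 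I would then apply this to $S=\mathbf{T}$. From $\mathbf{T}^*=\begin{bmatrix}{0}&{Y^*}\\{X^*}&{0}\end{bmatrix}$ one computes
\[
\mathcal{R}(\mathbf{T})=\frac12\begin{bmatrix}{0}&{X+Y^*}\\{X^*+Y}&{0}\end{bmatrix},\qquad
\mathcal{I}(\mathbf{T})=\frac{1}{2i}\begin{bmatrix}{0}&{X-Y^*}\\{Y-X^*}&{0}\end{bmatrix}.
\]
Using the identity $\bigl\|\begin{bmatrix}{0}&{A}\\{B}&{0}\end{bmatrix}\bigr\|=\max\{\|A\|,\|B\|\}$ (immediate from $\bigl\|\begin{bmatrix}{0}&{A}\\{B}&{0}\end{bmatrix}\bigr\|^2=\bigl\|\begin{bmatrix}{B^*B}&{0}\\{0}&{A^*A}\end{bmatrix}\bigr\|$), together with $\|X^*+Y\|=\|X+Y^*\|$ and $\|Y-X^*\|=\|X-Y^*\|$, I obtain $\|\mathcal{R}(\mathbf{T})\|=\tfrac12\|X+Y^*\|$ and $\|\mathcal{I}(\mathbf{T})\|=\tfrac12\|X-Y^*\|$, which yields the auxiliary inequality.

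To finish, set $s=\|X+Y^*\|$ and $t=\|X-Y^*\|$. From $\max\{s,t\}=\tfrac12(s+t)+\tfrac12|s-t|$ and the triangle inequalities $s+t\ge\|(X+Y^*)+(X-Y^*)\|=2\|X\|$ and $s+t\ge\|(X+Y^*)-(X-Y^*)\|=2\|Y^*\|=2\|Y\|$ (hence $s+t\ge 2\max\{\|X\|,\|Y\|\}$), it follows that
\[
w_q(\mathbf{T})\ \ge\ \frac{|q|}{4}(s+t)+\frac{|q|}{4}|s-t|\ \ge\ \frac{|q|}{2}\max\{\|X\|,\|Y\|\}+\frac{|q|}{4}\bigl|\,\|X+Y^*\|-\|X-Y^*\|\,\bigr|,
\]
which is the desired bound. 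I do not expect a genuine obstacle: one only needs a little care with the scalar $1/i$ when forming $\mathcal{I}(\mathbf{T})$, and with the (harmless) fact that Corollary \ref{co1.6} and \eqref{ri} are applied on $\mathcal{H}\oplus\mathcal{H}$ rather than on $\mathcal{H}$, where they hold verbatim. As an alternative to the $\mathcal{R},\mathcal{I}$ route, the auxiliary inequality $w_q(\mathbf{T})\ge\tfrac{|q|}{2}\|X+Y^*\|$ can be obtained directly by testing with the unit vector $\mathbf{u}=\tfrac1{\sqrt2}\begin{bmatrix}{\eta}\\{\xi}\end{bmatrix}$, where $\xi$ nearly norms $X+Y^*$ and $\eta=(X+Y^*)\xi/\|(X+Y^*)\xi\|$, so that $\operatorname{Re}\langle\mathbf{T}\mathbf{u},\mathbf{u}\rangle=\tfrac12\|(X+Y^*)\xi\|$, and then choosing $\mathbf{v}=\overline{q}\,\mathbf{u}+\sqrt{1-|q|^2}\,e^{i\psi}\mathbf{w}$ with $\mathbf{w}\perp\mathbf{u}$, $\|\mathbf{w}\|=1$, and $\psi$ chosen so the two terms in $\langle\mathbf{T}\mathbf{u},\mathbf{v}\rangle$ reinforce rather than cancel; the lower bound for $\|X-Y^*\|$ then comes from applying this to $\mathbf{T}$ with $Y$ replaced by $-Y$ via \eqref{t5.1a}.
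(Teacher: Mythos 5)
Your proposal is correct and follows essentially the same route as the paper: both use $w_q(\mathbf{T})\ge |q|\,\|\mathcal{R}(\mathbf{T})\|$ and $w_q(\mathbf{T})\ge |q|\,\|\mathcal{I}(\mathbf{T})\|$ (via Corollary \ref{co1.6} and \eqref{ri}), the norm identity for off-diagonal blocks, and then the identity $\max\{s,t\}=\tfrac12(s+t)+\tfrac12|s-t|$ with the triangle inequality to reach the stated bound (with $|q|$ in the second term, as the paper's own proof in fact yields).
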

							\begin{proof}
								Corollary \ref{co1.6} implies that
								$w_q \left(\begin{bmatrix}
									{0} & {X}\\
									{Y} & {0}
								\end{bmatrix}\right) \ge |q| \norm{\mathcal{R}\left(\begin{bmatrix}
										{0} & {X}\\
										{Y} & {0}
									\end{bmatrix}\right)}$ and $w_q \left(\begin{bmatrix}
									{0} & {X}\\
									{Y} & {0}
								\end{bmatrix}\right) \ge |q| \norm{\mathcal{I}\left(\begin{bmatrix}
										{0} & {X}\\
										{Y} & {0}
									\end{bmatrix}\right)}$.
								This implies,
								\begin{align*}
									w_q \left(\begin{bmatrix}
										{0} & {X}\\
										{Y} & {0}
									\end{bmatrix}\right) \ge |q| \norm{\mathcal{R}\left(\begin{bmatrix}
											{0} & {X}\\
											{Y} & {0}
										\end{bmatrix}\right)}= & \frac{|q|}{2}\norm{\left(\begin{bmatrix}
											{0} & {X+Y^*}\\
											{Y+X^*} & {0}
										\end{bmatrix}\right)}\\
									=& \frac{|q|}{2}\max \{ \|X+Y^*\|,\|Y+X^* \|\}\\
									=& \frac{|q|}{2}\max  \|X+Y^*\|.
								\end{align*}	
								Also,
								\begin{align*}
									w_q \left(\begin{bmatrix}
										{0} & {X}\\
										{Y} & {0}
									\end{bmatrix}\right) \ge |q| \norm{\mathcal{I}\left(\begin{bmatrix}
											{0} & {X}\\
											{Y} & {0}
										\end{bmatrix}\right)}= & \frac{|q|}{2}\norm{\left(\begin{bmatrix}
											{0} & {X-Y^*}\\
											{Y-X^*} & {0}
										\end{bmatrix}\right)}\\
									=& \frac{|q|}{2}\max \{ \|X-Y^*\|,\|Y-X^* \|\}\\
									=& \frac{|q|}{2} \|X-Y^*\|.
								\end{align*}	
								Finally,
								\begin{align*}
									w_q \left(\begin{bmatrix}
										{0} & {X}\\
										{Y} & {0}
									\end{bmatrix}\right) 
									\ge & \frac{|q|}{2}\max\{\|X+Y^*\|,\|X-Y^*\|\} \\
									=& \frac{|q|}{4}(\|X+Y^*\|+\|X-Y^*\|)+ \frac{|q|}{4}|\|X+Y^*\|-\|X-Y^*\||\\
									\ge & \frac{|q|}{4}(\|(X+Y^*) \pm (X-Y^*)\|)+ \frac{|q|}{4}|\|X+Y^*\|-\|X-Y^*\||\\
									\ge & \frac{|q|}{2}\max\{\|X\|, \|Y\| \}+ \frac{|q|}{4}|\|X+Y^*\|-\|X-Y^*\||.
								\end{align*} 
								
							\end{proof}
							
							Taking $X=Y$ in the aforementioned result, we obtain the following corollary.
							\begin{corollary}
								If $X\in \mathcal{B(H)} $ and $q \in \mathcal{D'}$, then we have 
								\begin{equation}\label{e5.8}
									w_q \left(\begin{bmatrix}
										{0} & {X}\\
										{X} & {0}
									\end{bmatrix}\right) 
									\ge \frac{|q|}{2}\|X\|+ \frac{|q|}{2}|\|\mathcal{R}(X)\|-\|\mathcal{I}(X)\||.
								\end{equation}
							\end{corollary}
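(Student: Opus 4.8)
The plan is to derive \eqref{e5.8} as the special case $Y = X$ of Theorem \ref{thmnew}. That theorem asserts, for all $X, Y \in \mathcal{B(H)}$ and $q \in \mathcal{D'}$,
\[
w_q \left(\begin{bmatrix} 0 & X \\ Y & 0 \end{bmatrix}\right) \ge \frac{|q|}{2}\max\{\|X\|, \|Y\|\} + \frac{|q|}{4}\bigl|\, \|X + Y^*\| - \|X - Y^*\| \,\bigr|,
\]
where $\tfrac{|q|}{4}$ is the coefficient that actually emerges from the proof of that theorem. First I would substitute $Y = X$; then $\max\{\|X\|,\|X\|\} = \|X\|$, so the first summand becomes $\tfrac{|q|}{2}\|X\|$.

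Next I would translate the two remaining norms into the Cartesian data of $X$. Using $\mathcal{R}(X) = \tfrac{X + X^*}{2}$ one gets $X + Y^* = X + X^* = 2\,\mathcal{R}(X)$, hence $\|X + Y^*\| = 2\|\mathcal{R}(X)\|$. Using $\mathcal{I}(X) = \tfrac{X - X^*}{2i}$ one gets $X - Y^* = X - X^* = 2i\,\mathcal{I}(X)$, and since the scalar $i$ has modulus one, $\|X - Y^*\| = 2\|\mathcal{I}(X)\|$. Plugging these in,
\[
\frac{|q|}{4}\bigl|\, \|X + Y^*\| - \|X - Y^*\| \,\bigr| = \frac{|q|}{4}\,\bigl|\, 2\|\mathcal{R}(X)\| - 2\|\mathcal{I}(X)\| \,\bigr| = \frac{|q|}{2}\,\bigl|\, \|\mathcal{R}(X)\| - \|\mathcal{I}(X)\| \,\bigr|,
\]
and combining the two contributions gives exactly \eqref{e5.8}.

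This argument carries no genuine difficulty; it is a one-line consequence of Theorem \ref{thmnew}. The only step that calls for a little attention is the identity $\|X - X^*\| = 2\|\mathcal{I}(X)\|$: one must recognise that $X - X^*$ equals $2i\,\mathcal{I}(X)$ and invoke the fact that multiplication by a unimodular scalar is an isometry, rather than mishandling the factor $i$. Everything else is direct substitution into the stated inequality.
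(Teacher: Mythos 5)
Your proposal is correct and matches the paper's own derivation, which simply takes $Y=X$ in Theorem \ref{thmnew} and rewrites $X+X^*=2\,\mathcal{R}(X)$ and $X-X^*=2i\,\mathcal{I}(X)$ so that $\frac{|q|}{4}\bigl|\|X+Y^*\|-\|X-Y^*\|\bigr|$ becomes $\frac{|q|}{2}\bigl|\|\mathcal{R}(X)\|-\|\mathcal{I}(X)\|\bigr|$. Your remark that the coefficient emerging from the theorem's proof is $\frac{|q|}{4}$ (rather than the $\frac{q}{4}$ appearing in its statement) is also consistent with the paper's usage.
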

							\begin{remark}
								For $q=1$, (\ref{e5.8}) gives us a refinement of inequality (\ref{eq2.1}).	 
								Also, if $X^2=0$ then relation (\ref{e5.8}) gives us a significant result as follows
								\begin{equation*}
									\frac{|q|}{2}\|X\| \le w_q\left(\begin{bmatrix}
										{0} & {X}\\
										{X} & {0}
									\end{bmatrix}\right)\le \|X\|.   
								\end{equation*}
								\end{remark}
								In our next result, we extend the following result for $q$-numerical radius.
								\begin{lemma}\cite{abu2015numerical}\label{block}
									Let $T,S \in \mathcal{B(H)}$ are positive operators. Then
									\begin{equation*}
										w \left( \begin{bmatrix}
											0 &T \\
											S &0 
										\end{bmatrix}\right) =\frac{1}{2}\|T+S\|.
									\end{equation*}   
								\end{lemma}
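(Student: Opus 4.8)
The plan is to reduce the computation to a one–parameter family of operator norms. First I would invoke the standard identity $w(\mathbf{T})=\sup_{\theta\in[0,2\pi)}\|\mathcal{R}(e^{i\theta}\mathbf{T})\|$, valid for every $\mathbf{T}\in\mathcal{B}(\mathcal{H}\oplus\mathcal{H})$ (it follows from $|\langle \mathbf{T}\xi,\xi\rangle|=\sup_\theta\langle\mathcal{R}(e^{i\theta}\mathbf{T})\xi,\xi\rangle$ together with the fact that the self-adjoint operator $\mathcal{R}(e^{i\theta}\mathbf{T})$ satisfies $w=\|\cdot\|$, and that replacing $\theta$ by $\theta+\pi$ flips its sign). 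Applying it to $\mathbf{T}=\begin{bmatrix}0&T\\ S&0\end{bmatrix}$ and using $T^*=T$, $S^*=S$, one computes $\mathcal{R}(e^{i\theta}\mathbf{T})=\tfrac12\begin{bmatrix}0&e^{i\theta}T+e^{-i\theta}S\\ (e^{i\theta}T+e^{-i\theta}S)^*&0\end{bmatrix}$. Since the norm of an off-diagonal operator matrix $\begin{bmatrix}0&P\\ Q&0\end{bmatrix}$ equals $\max\{\|P\|,\|Q\|\}$, and here the two blocks are adjoints of one another, this yields $w(\mathbf{T})=\tfrac12\sup_\theta\|e^{i\theta}T+e^{-i\theta}S\|$.

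The lower bound $w(\mathbf{T})\ge\tfrac12\|T+S\|$ is then immediate by taking $\theta=0$. For the matching upper bound I would establish the pointwise estimate $\|e^{i\theta}T+e^{-i\theta}S\|\le\|T+S\|$ for every $\theta$; this is the only place where positivity of $T$ and $S$ is used. Fixing unit vectors $x,y$ and applying the generalized Cauchy--Schwarz inequality to the positive operators $T$ and $S$ separately, namely $|\langle Tx,y\rangle|\le\langle Tx,x\rangle^{1/2}\langle Ty,y\rangle^{1/2}$ and likewise for $S$, followed by the Cauchy--Schwarz inequality in $\mathbb{C}^2$, one obtains
\begin{align*}
|\langle(e^{i\theta}T+e^{-i\theta}S)x,y\rangle|
&\le\langle Tx,x\rangle^{1/2}\langle Ty,y\rangle^{1/2}+\langle Sx,x\rangle^{1/2}\langle Sy,y\rangle^{1/2}\\
&\le\langle(T+S)x,x\rangle^{1/2}\langle(T+S)y,y\rangle^{1/2}\le\|T+S\|,
\end{align*}
where the last step uses $\langle(T+S)x,x\rangle\le\|T+S\|$ for a unit vector $x$. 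Taking the supremum over unit $x,y$ gives $\|e^{i\theta}T+e^{-i\theta}S\|\le\|T+S\|$, hence $w(\mathbf{T})\le\tfrac12\|T+S\|$, and combining the two bounds proves the lemma.

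I expect the only non-routine step to be the pointwise norm inequality $\|e^{i\theta}T+e^{-i\theta}S\|\le\|T+S\|$: it should be attacked through the bilinear form $(x,y)\mapsto\langle(e^{i\theta}T+e^{-i\theta}S)x,y\rangle$ rather than through $(e^{i\theta}T+e^{-i\theta}S)^*(e^{i\theta}T+e^{-i\theta}S)$, since the cross terms $e^{\pm 2i\theta}(TS+ST)$ in the latter are not sign-definite and that route stalls. Once this reformulation is made, positivity of $T$ and $S$ delivers the estimate via two applications of Cauchy--Schwarz, and every other ingredient (the $\sup_\theta\|\mathcal{R}(e^{i\theta}\cdot)\|$ formula and the norm of an off-diagonal block matrix) is standard.
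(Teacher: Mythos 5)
Your proof is correct, but note that the paper itself does not prove this lemma at all: it is imported verbatim from the cited reference \cite{abu2015numerical}, so there is no in-paper argument to compare against. Your route is essentially the standard one (and the one underlying the cited source): reduce via $w(\mathbf{T})=\sup_{\theta}\|\mathcal{R}(e^{i\theta}\mathbf{T})\|$ and $\left\|\begin{bmatrix}0&P\\ Q&0\end{bmatrix}\right\|=\max\{\|P\|,\|Q\|\}$ to $w(\mathbf{T})=\tfrac12\sup_{\theta}\|e^{i\theta}T+e^{-i\theta}S\|$, then use positivity through the mixed Cauchy--Schwarz inequality $|\langle Tx,y\rangle|\le\langle Tx,x\rangle^{1/2}\langle Ty,y\rangle^{1/2}$ to pin the supremum at $\theta=0$; each of these steps checks out, including the observation that the two off-diagonal blocks of $\mathcal{R}(e^{i\theta}\mathbf{T})$ are mutual adjoints. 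It is worth noting that the same device, the lower estimate $w_q(\cdot)\ge |q|\sup_{\theta}\|\mathcal{R}(e^{i\theta}\cdot)\|$ applied to an off-diagonal block operator, is exactly what the paper uses right after this lemma to obtain its $q$-analogue bound $\tfrac{|q|}{2}\sup_{\theta}\|e^{i\theta}X+e^{-i\theta}Y^*\|$, so your argument is fully consonant with the surrounding material; the only caveat is that for the $q$-numerical radius the first step is only an inequality rather than an identity, which is precisely why the paper's $q$-version yields bounds rather than the clean equality you recover at $q=1$.
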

								To prove this, we need the following relation.
								\begin{equation*}
									|q| \sup_{\theta \in \mathbb{R}} \| \mathcal{R}(e^{i \theta}T)\| \le  \sup_{\theta \in \mathbb{R}} w_q( \mathcal{R}(e^{i \theta}T))\\
									\le  \sup_{\theta \in \mathbb{R}} w_q(e^{i \theta}T)\\
									=  w_q(T).
								\end{equation*}
								Hence, for $T \in \mathcal{B(H)}$, we have
								\begin{equation*}
									w_q(T) \ge    |q| \sup_{\theta \in \mathbb{R}}\| \mathcal{R}(e^{i \theta}T)\|.
								\end{equation*}
								
								\begin{theorem}
									Let $X,Y \in \mathcal{B(H)}$, $q \in \mathcal{D'}$ and $0 \le \gamma \le 1$, we have
									\begin{align*}
										\frac{|q|}{2}\sup_{\theta \in \mathbb{R}}\left\lbrace \|e^{i\theta}X+e^{-i\theta}Y^*\|\right\rbrace \le 		w_q\left(\begin{bmatrix}
											{0} & {X}\\
											{Y} & {0}
										\end{bmatrix}\right)\le & \frac{|q|}{2}\| |X|^{2\gamma}+|Y^*|^{2(1-\gamma)}\|^\frac{1}{2}\| |X^*|^{2(1-\gamma)}+|Y|^{2\gamma}\|^\frac{1}{2}\\
										+\sqrt{1-|q|^2}\max\{ \|X\|, \|Y\|\}.	
									\end{align*}    
								\end{theorem}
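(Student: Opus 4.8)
The plan is to prove the two inequalities separately, each reducing to scalar Cauchy--Schwarz arguments on $\mathcal{H}\oplus\mathcal{H}$; write $T=\begin{bmatrix}0 & X\\ Y & 0\end{bmatrix}$. For the lower bound I would apply the inequality $w_q(T)\ge |q|\sup_{\theta\in\mathbb{R}}\|\mathcal{R}(e^{i\theta}T)\|$ recorded just before the statement. A direct computation gives $\mathcal{R}(e^{i\theta}T)=\tfrac12\begin{bmatrix}0 & e^{i\theta}X+e^{-i\theta}Y^*\\ e^{i\theta}Y+e^{-i\theta}X^* & 0\end{bmatrix}$; since the norm of an off-diagonal operator matrix $\begin{bmatrix}0 & P\\ Q & 0\end{bmatrix}$ equals $\max\{\|P\|,\|Q\|\}$ and here the lower-left block is the adjoint of the upper-right one, $\|\mathcal{R}(e^{i\theta}T)\|=\tfrac12\|e^{i\theta}X+e^{-i\theta}Y^*\|$. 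Taking the supremum over $\theta$ yields the left-hand estimate.

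For the upper bound I would first separate off the dependence on $q$ (the case $|q|=1$ being immediate from $w_q(T)=w(T)$). For admissible $x,y\in\mathcal{H}\oplus\mathcal{H}$, write $y=\overline{q}x+\sqrt{1-|q|^2}z$ with $\|z\|=1$ and $\langle x,z\rangle=0$, as in the construction introduced in Section 2; then $\langle Tx,y\rangle=q\langle Tx,x\rangle+\sqrt{1-|q|^2}\langle Tx,z\rangle$ gives $|\langle Tx,y\rangle|\le |q|\,w(T)+\sqrt{1-|q|^2}\,\|T\|$. Since $\|T\|=\max\{\|X\|,\|Y\|\}$, taking the supremum over $x,y$ reduces the problem to the classical inequality $w(T)\le \tfrac12\,\||X|^{2\gamma}+|Y^*|^{2(1-\gamma)}\|^{1/2}\,\||X^*|^{2(1-\gamma)}+|Y|^{2\gamma}\|^{1/2}$.

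To establish this, take a unit vector $x=\begin{bmatrix}x_1\\ x_2\end{bmatrix}$ and expand $\langle Tx,x\rangle=\langle Xx_2,x_1\rangle+\langle Yx_1,x_2\rangle$. The mixed Cauchy--Schwarz inequality of Lemma \ref{l5.8}, with $f(t)=t^{\gamma}$ and $g(t)=t^{1-\gamma}$, gives $|\langle Xx_2,x_1\rangle|\le ab$ and $|\langle Yx_1,x_2\rangle|\le cd$, where $a^2=\langle |X|^{2\gamma}x_2,x_2\rangle$, $b^2=\langle |X^*|^{2(1-\gamma)}x_1,x_1\rangle$, $c^2=\langle |Y|^{2\gamma}x_1,x_1\rangle$, $d^2=\langle |Y^*|^{2(1-\gamma)}x_2,x_2\rangle$. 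Grouping the scalars by the component they involve ($a,d$ depend on $x_2$ and $b,c$ on $x_1$), Cauchy--Schwarz for pairs yields $ab+cd\le\sqrt{a^2+d^2}\,\sqrt{b^2+c^2}$, while $a^2+d^2=\langle(|X|^{2\gamma}+|Y^*|^{2(1-\gamma)})x_2,x_2\rangle\le\||X|^{2\gamma}+|Y^*|^{2(1-\gamma)}\|\,\|x_2\|^2$ and likewise $b^2+c^2\le\||X^*|^{2(1-\gamma)}+|Y|^{2\gamma}\|\,\|x_1\|^2$. Finishing with $\|x_1\|\,\|x_2\|\le\tfrac12(\|x_1\|^2+\|x_2\|^2)=\tfrac12$ and taking the supremum over $x$ gives the required numerical radius bound, which combined with the reduction above proves the theorem.

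I expect the main obstacle to be this last computation, and in particular the correct pairing of the four scalars after the mixed Cauchy--Schwarz step: matching the terms that share a coordinate component is exactly what forces the operator sums $|X|^{2\gamma}+|Y^*|^{2(1-\gamma)}$ and $|X^*|^{2(1-\gamma)}+|Y|^{2\gamma}$ to appear instead of four separate operator norms. The reduction $w_q(T)\le |q|\,w(T)+\sqrt{1-|q|^2}\,\|T\|$ and the lower-bound step are routine, provided one is careful to work on the direct sum $\mathcal{H}\oplus\mathcal{H}$ rather than on $\mathcal{H}$.
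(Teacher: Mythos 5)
Your proof is correct, and the lower bound is argued exactly as in the paper: the relation $w_q(T)\ge |q|\sup_{\theta}\|\mathcal{R}(e^{i\theta}T)\|$ together with the identity $\|\mathcal{R}(e^{i\theta}T)\|=\tfrac12\|e^{i\theta}X+e^{-i\theta}Y^*\|$ for the off-diagonal $T$. For the upper bound your route differs in organization rather than in substance: you first split off the $q$-dependence globally via $|\langle Tx,y\rangle|\le |q|\,|\langle Tx,x\rangle|+\sqrt{1-|q|^2}\,\|T\|$, so the problem reduces to the classical estimate $w(T)\le\tfrac12\||X|^{2\gamma}+|Y^*|^{2(1-\gamma)}\|^{1/2}\||X^*|^{2(1-\gamma)}+|Y|^{2\gamma}\|^{1/2}$, which you then prove with Furuta's mixed Schwarz inequality (Lemma \ref{l5.8} with $f(t)=t^{\gamma}$, $g(t)=t^{1-\gamma}$), the pairing $ab+cd\le\sqrt{a^2+d^2}\sqrt{b^2+c^2}$, and $\|x_1\|\|x_2\|\le\tfrac12$. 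The paper instead carries the vector $z$ through the componentwise computation, applies the same Furuta and Cauchy--Schwarz steps to the $q$-part, and controls both terms at the end with the parametrization $\|x_1\|=\sin\theta$, $\|x_2\|=\cos\theta$, $\|z_1\|=\cos\phi$, $\|z_2\|=\sin\phi$, bounding the cross term by $\max\{\|X\|,\|Y\|\}\cos(\theta-\phi)$. The constants come out identical either way; your factorization is a bit cleaner (it avoids the trigonometric bookkeeping and isolates the known numerical-radius bound for off-diagonal operator matrices as a standalone step), while the paper's one-pass computation keeps everything at the level of the $q$-numerical radius. One small point of care in your reduction: the decomposition $y=\overline{q}x+\sqrt{1-|q|^2}\,z$ requires $|q|<1$, which you correctly flag by treating $|q|=1$ (where $w_q=w$ and the second summand vanishes) separately.
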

								\begin{proof}
									From the relation $ w_q(T) \ge    |q| \sup_{\theta \in \mathbb{R}}\| \mathcal{R}(e^{i \theta}T)\|$, we have
									\begin{align*}
										w_q \left(\begin{bmatrix}
											{0} & {X}\\
											{Y} & {0}
										\end{bmatrix}\right) \ge&   |q|\sup_{\theta \in \mathbb{R}} \norm{ \mathcal{R}\left(e^{i \theta} \begin{bmatrix}
												{0} & {X}\\
												{Y} & {0}
											\end{bmatrix}\right)}   \\
										\ge& \frac{|q|}{2}  \sup_{\theta \in \mathbb{R}} \norm{ \begin{bmatrix}
												{0} & {e^{i\theta}X+e^{-i\theta}Y^*}\\
												{e^{i\theta}Y+e^{-i\theta}X^*} & {0}
										\end{bmatrix}}  \\
										= & \frac{|q|}{2}\sup_{\theta \in \mathbb{R}} \|e^{i\theta}X+e^{-i\theta}Y^*\|. 
									\end{align*} 
									Now, to prove the second part, let
									Let $x=\begin{bmatrix}
										{x_1}\\
										{x_2} 
									\end{bmatrix} \in \mathcal{H},$
									and $y=
									\begin{bmatrix}
										{y_1}\\
										{y_2} 
									\end{bmatrix} \in \mathcal{H}$ with $\|x\|=\|y\|=1$ and $\langle x,y \rangle=q$.
									Then we can take $y= \overline{q}x+\sqrt{1-|q|^2}z$, where $z=\begin{bmatrix}
										{z_1}\\
										{z_2} 
									\end{bmatrix} \in \mathcal{H}$, $\|z\|=1$ and $\langle x,z \rangle =0$. Thus,
									$y_1=\overline{q}x_1+\sqrt{1-|q|^2}z_1$ and $y_2=\overline{q}x_2+\sqrt{1-|q|^2}z_2$.
									Hence,
									\begin{align*}
										&\left| \left\langle \begin{bmatrix}
											{0} & {X}\\
											{Y} & {0}
										\end{bmatrix}
										\begin{bmatrix}
											{x_1}\\
											{x_2} 
										\end{bmatrix},
										\begin{bmatrix}
											{y_1}\\
											{y_2} 
										\end{bmatrix}
										\right\rangle \right| \\
										\le& |\langle Xx_2,y_1\rangle|+|\langle Yx_1,y_2\rangle| \\
										=& |\langle Xx_2,\overline{q}x_1+\sqrt{1-|q|^2}z_1\rangle|+|\langle Yx_1,\overline{q}x_2+\sqrt{1-|q|^2}z_2\rangle| \\
										\le & |q| (|\langle Xx_2,x_1 \rangle|+|\langle Yx_1,x_2 \rangle|)+\sqrt{1-|q|^2}(|\langle Xx_2,z_1 \rangle|+|\langle Yx_1,z_2 \rangle|)			
									\end{align*}
									Using Theorem  1\cite{furuta1986simplified} and Cauchy-Schwarz inequality, respectively, we have
									\begin{align*}
										&\left| \left\langle \begin{bmatrix}
											{0} & {X}\\
											{Y} & {0}
										\end{bmatrix}
										\begin{bmatrix}
											{x_1}\\
											{x_2} 
										\end{bmatrix},
										\begin{bmatrix}
											{y_1}\\
											{y_2} 
										\end{bmatrix}
										\right\rangle \right| \\
										\le &|q| (\langle |X|^{2\gamma}x_2,x_2\rangle^\frac{1}{2}\langle |X^*|^{2(1-\gamma)}x_1,x_1\rangle^\frac{1}{2}+\langle |Y|^{2\gamma}x_1,x_1\rangle^\frac{1}{2}\langle |Y^*|^{2(1-\gamma)}x_2,x_2\rangle^\frac{1}{2} )\\
										+& \sqrt{1-|q|^2}(\|X\| \|x_2\| \|z_1\|+\|Y\| \|x_1\| \|z_2\|)\\
										\le&|q|(\langle |X|^{2\gamma}x_2,x_2\rangle+\langle |Y^*|^{2(1-\gamma)}x_2,x_2\rangle)^\frac{1}{2}(\langle |X^*|^{2(1-\gamma)}x_1,x_1\rangle+\langle |Y|^{2\gamma}x_1,x_1\rangle)^\frac{1}{2}\\
										+& \sqrt{1-|q|^2}(\|X\| \|x_2\| \|z_1\|+\|Y\| \|x_1\| \|z_2\|)\\
										\le & |q| \| |X|^{2\gamma}+|Y^*|^{2(1-\gamma)} \|^\frac{1}{2}\| |X^*|^{2(1-\gamma)}+|Y|^{2\gamma}\|^\frac{1}{2}\|x_1\| \|x_2\| \\
										+& \sqrt{1-|q|^2}(\|X\| \|x_2\| \|z_1\|+\|Y\| \|x_1\| \|z_2\|).
									\end{align*}	
									Take $\|x_1\|=\sin(\theta)$, $\|x_2\|=\cos(\theta)$, $\|z_1\|=\cos(\phi)$ and $\|z_2\|=\sin(\phi)$ where $\theta, \phi \in \mathbb{R}$.
									\begin{align*}
										\left| \left\langle \begin{bmatrix}
											{0} & {X}\\
											{Y} & {0}
										\end{bmatrix}
										\begin{bmatrix}
											{x_1}\\
											{x_2} 
										\end{bmatrix},
										\begin{bmatrix}
											{y_1}\\
											{y_2} 
										\end{bmatrix}
										\right\rangle \right|\le&
										|q|\left( \| |X|^{2\gamma}+|Y^*|^{2(1-\gamma)}\|^\frac{1}{2}\| |X^*|^{2(1-\gamma)}+|Y|^{2\gamma}\|^\frac{1}{2}\right)\cos(\theta)\sin(\theta) \\
										+& \sqrt{1-|q|^2}(\|X\| \cos(\theta)\cos(\phi)+\|Y\| \sin(\theta)\sin(\phi))\\
										\le & \frac{|q|\sin(2\theta)}{2}\| |X|^{2\gamma}+|Y^*|^{2(1-\gamma)}\|^\frac{1}{2}\| |X^*|^{2(1-\gamma)}+|Y|^{2\gamma}\|^\frac{1}{2}\\
										+&\sqrt{1-|q|^2}\max\{ \|X\|, \|Y\|\}.	
									\end{align*}	
									Hence,
									\begin{equation*}
										w_q\left(\begin{bmatrix}
											{0} & {X}\\
											{Y} & {0}
										\end{bmatrix}\right)\le \frac{|q|}{2}\| |X|^{2\gamma}+|Y^*|^{2(1-\gamma)}\|^\frac{1}{2}\| |X^*|^{2(1-\gamma)}+|Y|^{2\gamma}\|^\frac{1}{2}+\sqrt{1-|q|^2}\max\{ \|X\|, \|Y\|\}.	
									\end{equation*}
								\end{proof}
								\begin{remark}
									If $X$ and $Y$ are positive operators then, for $\gamma=1/2$, 
									it follows from the above theorem that   
									\begin{equation*}\label{equatione}
										\frac{|q| }{2}\| X+Y\| \le	w_q\left(\begin{bmatrix}
											{0} & {X}\\
											{Y} & {0}
										\end{bmatrix}\right) \le \frac{|q| }{2}\| X+Y\|+\sqrt{1-|q|^2}\max\{ \|X\|,\|Y\|\}.
									\end{equation*}
								\end{remark}
								For $q=1$, it follows $w\left(\begin{bmatrix}
									{0} & {X}\\
									{Y} & {0}
								\end{bmatrix}\right)=\frac{1}{2}\|X+Y\|$, which is mentioned in Lemma \ref{block}. 
								
								In our final result, we assume $X$ and $Y$ both are in $\prod_{s,\alpha}^n $. 
								\begin{theorem}\label{t5.12}
									Let $X,Y \in \prod_{s,\alpha}^n $ and $q \in \mathcal{D'}$,
									\begin{itemize}
										\item [(a)] If $\alpha \ne 0$, then we have 
										\begin{align*}
											w_q \left(\begin{bmatrix}
												{0} & {X}\\
												{Y} & {0}
											\end{bmatrix}\right) \ge& \frac{|q|}{4}\max \{ \|X(1+\cot(\alpha))+Y^*(1-\cot(\alpha))\|,\|X(1-\cot(\alpha))+Y^*(1+\cot(\alpha))\|\}\\
											+&\frac{|q|}{4}\left|\|X+Y^*\|-\cot(\alpha) \|X-Y^*\|\right|.	
										\end{align*}	
										\item [(b)] If $\alpha = 0$, then we have 
										\begin{align*}
											w_q \left(\begin{bmatrix}
												{0} & {X}\\
												{Y} & {0}
											\end{bmatrix}\right) 
											\ge & \frac{|q|}{2}\max\{\|X\|, \|Y\| \}+ \frac{|q|}{4}|\|X+Y\|-\|X-Y\||.
										\end{align*}
									\end{itemize}
									
								\end{theorem}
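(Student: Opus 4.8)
The plan is to run the argument of Theorem~\ref{thmnew} with a single extra input supplied by sectoriality. Write $\mathbf{T}=\begin{bmatrix}0&X\\Y&0\end{bmatrix}$, $P=X+Y^{*}$, $R=X-Y^{*}$. A direct block computation gives $\mathcal{R}(\mathbf{T})=\tfrac12\begin{bmatrix}0&P\\P^{*}&0\end{bmatrix}$ and $\mathcal{I}(\mathbf{T})=\tfrac1{2i}\begin{bmatrix}0&R\\-R^{*}&0\end{bmatrix}$; since the norm of an off-diagonal operator matrix is the larger of its two block norms and $\|P^{*}\|=\|P\|$, $\|R^{*}\|=\|R\|$, we get $\|\mathcal{R}(\mathbf{T})\|=\tfrac12\|P\|$ and $\|\mathcal{I}(\mathbf{T})\|=\tfrac12\|R\|$. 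Using the weak unitary invariance and homogeneity of $w_{q}$ one also has $w_{q}(\mathbf{T})=w_{q}(e^{i\theta}\mathbf{T})$ and $\|\mathcal{R}(e^{i\theta}\mathbf{T})\|=\tfrac12\|e^{i\theta}X+e^{-i\theta}Y^{*}\|$ for every real $\theta$. Part~(b) is then immediate: for $\alpha=0$ both $X$ and $Y$ are positive, hence self-adjoint, so $X^{*}=X$, $Y^{*}=Y$, and Theorem~\ref{thmnew} is precisely the asserted inequality.

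For part~(a) I would obtain two lower bounds for $w_{q}(\mathbf{T})$ and combine them. The first is free: Corollary~\ref{co1.6}(a) applied to $\mathbf{T}$ gives $w_{q}(\mathbf{T})\ge|q|\,\|\mathcal{R}(\mathbf{T})\|=\tfrac{|q|}{2}\|P\|$. The second, which is the heart of the matter, is $w_{q}(\mathbf{T})\ge\tfrac{|q|}{2}\cot\alpha\,\|R\|$; this is where sectoriality of $X$ and $Y$ must be used (for $\alpha\ge\pi/4$ it is already contained in Theorem~\ref{thmnew}, since then $\cot\alpha\le1$). Starting from $w_{q}(\mathbf{T})=w_{q}(e^{i\theta}\mathbf{T})\ge\tfrac{|q|}{2}\|e^{i\theta}X+e^{-i\theta}Y^{*}\|$ for all $\theta$, it suffices to show $\sup_{\theta}\|e^{i\theta}X+e^{-i\theta}Y^{*}\|\ge\cot\alpha\,\|X-Y^{*}\|$ for $X,Y\in\prod_{s,\alpha}^{n}$ (note $Y^{*}\in\prod_{s,\alpha}^{n}$ as well, because $W(Y^{*})=\overline{W(Y)}\subseteq S_{\alpha}$, and $P=X+Y^{*}\in\prod_{s,\alpha}^{n}$ since $S_{\alpha}$ is a convex cone). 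Evaluating at a unit vector $x$ with $\|Rx\|=\|R\|$ and writing $a=\|Xx\|$, $b=\|Y^{*}x\|$, $c=\langle Xx,Y^{*}x\rangle=\langle YXx,x\rangle$, one has $\sup_{\theta}\|(e^{i\theta}X+e^{-i\theta}Y^{*})x\|^{2}=a^{2}+b^{2}+2|c|$ and $\|Rx\|^{2}=a^{2}+b^{2}-2\,\mathcal{R}c$, so the claim reduces to a scalar inequality to be closed using the sectorial control of $\mathcal{R}c$ relative to $|c|$ (via $c\in W(YX)$) together with the norm estimates of Lemma~\ref{il1.6} and Lemma~\ref{it 1.3}.

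Granting the two bounds, $w_{q}(\mathbf{T})\ge\tfrac{|q|}{2}\max\{\|P\|,\cot\alpha\,\|R\|\}$. Applying the identity $\max\{s,t\}=\tfrac12(s+t)+\tfrac12|s-t|$ with $s=\|P\|$, $t=\cot\alpha\|R\|$, and bounding $\tfrac12(\|P\|+\cot\alpha\|R\|)\ge\tfrac12\|P\pm\cot\alpha R\|$ by the triangle inequality, we obtain $w_{q}(\mathbf{T})\ge\tfrac{|q|}{4}\max\{\|P+\cot\alpha R\|,\|P-\cot\alpha R\|\}+\tfrac{|q|}{4}\bigl|\,\|P\|-\cot\alpha\|R\|\,\bigr|$. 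Rewriting $P\pm\cot\alpha R=(1\pm\cot\alpha)X+(1\mp\cot\alpha)Y^{*}$ gives exactly the right-hand side in the statement. The only delicate step throughout is the bound $w_{q}(\mathbf{T})\ge\tfrac{|q|}{2}\cot\alpha\,\|X-Y^{*}\|$ — essentially the promotion of the constant $1$ in Theorem~\ref{thmnew} to $\cot\alpha$ — while the rest is the same bookkeeping with the $\max$-identity and the triangle inequality already used there.
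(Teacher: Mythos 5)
Your part (b) and the final assembly in part (a) --- the bound $w_q(\mathbf{T})\ge |q|\,\|\mathcal{R}(\mathbf{T})\|=\tfrac{|q|}{2}\|X+Y^*\|$ from Corollary \ref{co1.6}(a), the identity $\max\{s,t\}=\tfrac12(s+t)+\tfrac12|s-t|$, and the triangle inequality --- are exactly what the paper does, where $\mathbf{T}=\begin{bmatrix}0&X\\Y&0\end{bmatrix}$. The gap is the step you yourself single out as the heart of the matter: the inequality $w_q(\mathbf{T})\ge\tfrac{|q|}{2}\cot(\alpha)\|X-Y^*\|$ is never established. You reduce it to showing $\sup_\theta\|e^{i\theta}X+e^{-i\theta}Y^*\|\ge\cot(\alpha)\|X-Y^*\|$ and leave the resulting scalar inequality ``to be closed,'' so the proposal is incomplete precisely where the sectorial hypothesis is supposed to do its work. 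Worse, that reduction cannot be closed: take $X=I$ and $Y=2I$, which lie in $\prod_{s,\alpha}^n$ for every $\alpha\in(0,\tfrac{\pi}{2})$; then $\sup_\theta\|e^{i\theta}X+e^{-i\theta}Y^*\|=\sup_\theta|e^{i\theta}+2e^{-i\theta}|=3$, while $\cot(\alpha)\|X-Y^*\|=\cot(\alpha)$ becomes arbitrarily large as $\alpha\to 0^+$. So the inequality you aim at is false for general $X,Y\in\prod_{s,\alpha}^n$, and the ingredients you list for closing it do not help (in particular $YX$ need not be sectorial, so $c\in W(YX)$ gives no control of $\mathcal{R}c$ by $|c|$ in terms of $\alpha$).

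For comparison, the paper obtains the missing bound by a different and much shorter route: from Lemma \ref{il1.6} and Theorem \ref{t1.16}(a) it writes $\|\mathcal{I}(T)\|\le\sin(\alpha)w(T)\le\sin(\alpha)\|T\|\le\tfrac{\tan(\alpha)}{|q|}w_q(T)$, i.e. $w_q(T)\ge|q|\cot(\alpha)\|\mathcal{I}(T)\|$, and applies this directly to the off-diagonal matrix $\mathbf{T}$, which gives $w_q(\mathbf{T})\ge\tfrac{|q|}{2}\cot(\alpha)\|X-Y^*\|$ at once; the rest of its argument is your bookkeeping. Your reluctance to take that shortcut is understandable, since $\mathbf{T}$ itself is not sectorial and those lemmas are stated for sectorial matrices, so that application is exactly the point needing justification; but as it stands your proposal asserts rather than proves the decisive inequality, and the particular path you sketch for it breaks down, so part (a) is not proved.
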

								\begin{proof}
									\begin{itemize}
										\item [(a)]
										From Theorem \ref{t1.16} and Lemma \ref{il1.6}, the following relations
										$$\|\mathcal{I}(T)\| \le \sin(\alpha)w(T)\le \sin(\alpha)\|T\|
										\le \frac{\tan(\alpha)}{|q|}w_q(T)$$ hold for any $T \in \mathcal{B(H)}$.
										Hence, for $T \in \mathcal{B(H)}$, we have
										\begin{equation}\label{e5.9}
											w_q(T) \ge |q| \|\mathcal{R}(T)\|~~ \text{and}~~ w_q(T) \ge |q| \cot(\alpha)\|\mathcal{I}(T)\|.
										\end{equation}
										Taking $T=\begin{bmatrix}
											{0} & {X}\\
											{Y} & {0}
										\end{bmatrix}$ in inequality \eqref{e5.9}, we have
										\begin{equation*}
											w_q \left(\begin{bmatrix}
												{0} & {X}\\
												{Y} & {0}
											\end{bmatrix}\right) \ge \frac{|q|}{2}\|X+Y^*\| ~~ \text{and} ~~
											w_q \left(\begin{bmatrix}
												{0} & {X}\\
												{Y} & {0}
											\end{bmatrix}\right) \ge \frac{|q|}{2}\cot(\alpha) \|X-Y^*\|. 
										\end{equation*}
										\begin{align*}
											w_q \left(\begin{bmatrix}
												{0} & {X}\\
												{Y} & {0}
											\end{bmatrix}\right) \ge & \frac{|q|}{2}\max \{\|X+Y^*\|,\cot(\alpha) \|X-Y^*\|\}\\
											\ge & \frac{|q|}{4}\left(\|X+Y^*\|+\cot(\alpha) \|X-Y^*\|+\left|\|X+Y^*\|-\cot(\alpha) \|X-Y^*\|\right|\right)\\
											\ge & \frac{|q|}{4}\left(\|(X+Y^*)\pm\cot(\alpha) (X-Y^*)\|+\left|\|X+Y^*\|-\cot(\alpha) \|X-Y^*\|\right|\right).
										\end{align*}
										Thus,
										\begin{align*}
											w_q \left(\begin{bmatrix}
												{0} & {X}\\
												{Y} & {0}
											\end{bmatrix}\right) \ge& \frac{|q|}{4} \Big(\max \{ \|X(1+\cot(\alpha))+Y^*(1-\cot(\alpha))\|,\|X(1-\cot(\alpha))+Y^*(1+\cot(\alpha))\|\}\\
											+&\left|\|X+Y^*\|-\cot(\alpha) \|X-Y^*\|\right|\Big).	
										\end{align*}	
										(b) If $\alpha=0$ then $X$ and $Y$ are positive matrices. We have
										\begin{equation*}
											w_q(T) \ge |q| \|\mathcal{R}(T)\|~~ \text{and}~~ w_q(T) \ge |q|\|\mathcal{I}(T)\|.
										\end{equation*} 
										By using similar calculations as Theorem \ref{thmnew}, we have
										\begin{align*}
											w_q \left(\begin{bmatrix}
												{0} & {X}\\
												{Y} & {0}
											\end{bmatrix}\right) 
											\ge & \frac{|q|}{2}\max\{\|X\|, \|Y^*\| \}+ \frac{|q|}{4}|\|X+Y^*\|-\|X-Y^*\||
										\end{align*} 
										As $X$ and $Y$ are positive so $X=X^*$ and $Y=Y^*$,we have
										\begin{align*}
											w_q \left(\begin{bmatrix}
												{0} & {X}\\
												{Y} & {0}
											\end{bmatrix}\right) 
											\ge & \frac{|q|}{2}\max\{\|X\|, \|Y\| \}+ \frac{|q|}{4}|\|X+Y\|-\|X-Y\||.
										\end{align*} 
									\end{itemize}
								\end{proof}
								
								\begin{remark}
									One notable point is that if $X \in \prod_{s,\alpha}^n$ then $X^* \in \prod_{s,\alpha}^n$. If we take $Y=X^*$, then Theorem \ref{t5.12} gives us 
									\begin{equation*}
										|q| \|X\| \le	w_q \left(\begin{bmatrix}
											{0} & {X}\\
											{X*} & {0}
										\end{bmatrix}\right) \le \|X\|.
									\end{equation*}	
									The lower bound mentioned in the aforementioned inequality is better as compared to the lower bound in Corollary \ref{NEW}(a).
								\end{remark}
								
								\bibliographystyle{unsrt}
								\bibliography{bib_NR_SP}		
								
								\section*{Statements and Declarations:}	
								\subsection*{Competing Interests}	
								The authors have no competing interests.
								\subsection*{Author Contributions}
								All authors contributed equally to this research article.
							\end{document}